\documentclass[11pt,reqno]{amsart}

\usepackage[T1]{fontenc}
\usepackage{palatino}
\usepackage{amsmath,amssymb,amsthm,mathtools}
\usepackage{amscd}
\usepackage[margin=1.1in]{geometry}
\usepackage{float}
\usepackage{hyperref}
\usepackage[nameinlink,capitalize]{cleveref}
\usepackage{tikz}
\usetikzlibrary{intersections}
\usetikzlibrary{decorations.markings}
\usetikzlibrary{calc}

\hypersetup{colorlinks=true,linkcolor=blue,citecolor=blue,urlcolor=blue}

\theoremstyle{plain}
\newtheorem{theorem}{Theorem}[section]
\newtheorem{proposition}[theorem]{Proposition}
\newtheorem{corollary}[theorem]{Corollary}
\newtheorem{lemma}[theorem]{Lemma}

\theoremstyle{definition}
\newtheorem{remark}[theorem]{Remark}
\newtheorem{example}[theorem]{Example}

\newcommand{\Mod}{\operatorname{Mod}}

\newcommand{\ob}{M}
\newcommand{\ncl}[1]{N(#1)}
\newcommand{\Z}{\mathbb Z}

\title[Contact 3--manifolds with infinitely many rational homology ball Stein fillings]
{Contact $3$--manifolds with infinitely many\\ rational homology ball Stein fillings}

\author{Burak Ozbagci}

\subjclass[2020]{57K33, 57K43, 57K20}
\keywords{contact structure, Stein filling, planar open book, rational homology ball, Kirby calculus, Klein bottle}

\usepackage{placeins}
\begin{document}
\begin{abstract}
We construct the first examples of contact $3$--manifolds admitting infinitely
many pairwise non-homeomorphic Stein fillings which are rational homology balls.
Our examples are planar.  They are obtained by a simple modification of Baykur's
recent planar contact $3$--manifold $(Y,\xi)$ with infinitely many pairwise
non-homeomorphic Stein fillings, and the fillings are distinguished by their
fundamental groups.  For one of these rational homology balls we give an explicit
Kirby diagram, with a single $1$--handle and a single $2$--handle attached along a
trefoil, which exhibits its boundary as $(0,-8)$--surgery on a two-component link.
We also identify the smooth topology of Baykur's example:
$Y$ is obtained by $(0,-8)$--surgery on the link $7^2_4$, and it is Seifert
fibered over the Klein bottle with one exceptional fiber.  Finally, we construct
further Stein fillings of $(Y,\xi)$, which realize all the groups
$\mathbb Z/m\rtimes_{-1}\mathbb Z$, $m\ge0$, as their fundamental groups.
\end{abstract}

\maketitle
\section{Introduction}

A contact $3$--manifold is said to be \emph{planar} if it is
supported by a planar open book---an open book whose pages have genus zero. Wendl showed that if a contact
$3$--manifold  is supported by a planar open book $(\Sigma,\phi)$, then every
minimal strong symplectic filling of it, after attaching a trivial symplectic
cobordism to its boundary, admits an allowable Lefschetz fibration over $D^2$
with fiber $\Sigma$ and monodromy $\phi$, inducing the given open book on the
boundary \cite{Wendl}.  As a consequence, the Stein fillings of a planar contact $3$--manifold are governed by the positive factorizations of the monodromy of any planar open book 
supporting the contact $3$--manifold at hand. 

Contact $3$--manifolds with infinitely many pairwise non-homeomorphic Stein
fillings were first constructed by Stipsicz and the author \cite{OS}, more than twenty years ago.  Baykur
\cite{BaykurPlanar} recently constructed planar examples, answering
Problem~4.105 of the K3 problem list \cite{K3}.  His basic example
$(Y,\xi)$ is supported by the open book
\[
(\Sigma_0^6,\phi),\qquad \phi=t_d\,B,\qquad B=t_ut_vt_wt_z,
\]
whose page $\Sigma_0^6$ is a sphere with six holes, and  $t_c$ denotes the
right-handed Dehn twist along a simple closed curve $c$ on the page. The curves 
$u,v,w,z,d \subset \Sigma_0^6$ are depicted below in
\cref{fig:baykur-curves}.  A crucial observation of Baykur is that the curve $\alpha$ of \cref{fig:alpha-beta-x}
satisfies $[B,t_\alpha]=1$.  The Stein fillings $X_n$, for $n\ge0$, are given by  the Lefschetz
fibrations corresponding to  the positive factorizations
$t_d\,t_\alpha^{\,n}B\,t_\alpha^{-n}$ of $\phi$, and Baykur distinguishes them
by their fundamental groups $\mathbb Z/(2n+1)\rtimes_{-1}\mathbb Z$.  Here and
below, for cyclic groups $A$ and $B$, we write $A\rtimes_{-1}B$ for the
semidirect product in which a generator of $B$ acts on $A$ by inversion.

Throughout, for an allowable positive factorization $\mu=t_{c_1}\cdots t_{c_n}$ (with every $c_i$ homologically nontrivial) of a
mapping class of a compact surface $\Sigma$ with boundary, $M(\Sigma, \mu)$ denotes the $3$--manifold given by the open book with page $\Sigma$ and monodromy $\mu$, and 
$X(\Sigma;c_1,\ldots,c_n)$ denotes the total space of the Lefschetz
fibration over the disk with fiber $\Sigma$ and vanishing cycles
$c_1,\ldots,c_n$, in the order of the factors, equipped with the Stein
structure provided by \cite{AO,LP}. Note that $X(\Sigma;c_1,\ldots,c_n)$ is 
a Stein filling of the contact manifold supported by the open book
$(\Sigma,\mu)$.

\smallskip
\noindent\emph{Rational homology ball fillings.}  Our main result is obtained by
varying Baykur's construction.  Recall that $\alpha$ is the round curve enclosing
the holes $\delta_1,\delta_2$ (see \cref{fig:alpha-beta-x}).  For a homologically nontrivial simple closed curve $c$ on the page,
let $(Y_c,\xi_c)$ be the planar contact $3$--manifold supported by the open book
$(\Sigma_0^6,t_cB)$, and let
\[
X_n(c)=X(\Sigma_0^6;c,\,t_\alpha^{\,n}(u),\,t_\alpha^{\,n}(v),\,t_\alpha^{\,n}(w),\,t_\alpha^{\,n}(z)),\qquad n\ge0 ,
\]
which are Stein fillings of $(Y_c,\xi_c)$, since $t_\alpha$ commutes with $B$.
For $c=d$ these are Baykur's $(Y,\xi)$ and $X_n$.  The fundamental groups of the fillings $X_n(c)$ can be computed
directly from a word representing $c$, and we do this for two natural families
of curves.  Let $c_{12}$ and $c_{345}$ be the round curves enclosing the holes
$\delta_1,\delta_2$ and $\delta_3,\delta_4,\delta_5$, respectively, and recall
that the vanishing cycle $u$ is the round curve enclosing $\delta_2,\delta_3$.
For a positive integer $m$ we set
\[
d_m=t_u^m(c_{12}),\qquad d'_m=t_u^m(c_{345}).
\]
The simplest members are $d_1=t_u(c_{12})$,  and
$d'_1=t_u(c_{345})$ (see \cref{fig:di}).  We write $(Y_m,\xi_m)$ and $(Y'_m,\xi'_m)$ for the
contact $3$--manifolds $(Y_c,\xi_c)$ with $c=d_m$ and $c=d'_m$, and $X_{m,n}$ and
$X'_{m,n}$ for their Stein fillings $X_n(c)$.

\begin{theorem}\label[theorem]{thm:ball}
Let $m$ be a positive integer.  Then the following hold.
\begin{enumerate}
\item The planar contact $3$--manifolds $(Y_m,\xi_m)$ and $(Y'_m,\xi'_m)$ are
rational homology spheres with
\[
H_1(Y_m;\Z)\cong\Z/4\oplus\Z/4,\qquad
H_1(Y'_m;\Z)\cong\Z/2\oplus\Z/2\oplus\Z/4 .
\]
In particular, no $Y_m$ is homeomorphic to any $Y'_{m'}$.
\item The Stein fillings $X_{m,n}$ of $(Y_m,\xi_m)$, $n\ge0$, are rational
homology balls with
\[
\pi_1(X_{m,n})\cong\Z/(4mn+1)\rtimes_{-1}\Z/4,\qquad H_1(X_{m,n};\Z)\cong\Z/4 .
\]
\item The Stein fillings $X'_{m,n}$ of $(Y'_m,\xi'_m)$, $n\ge0$, are rational
homology balls with
\[
\pi_1(X'_{m,n})\cong\langle r,b\mid brb^{-1}=r^{-1},\ b^2=r^{4mn}\rangle,\qquad
H_1(X'_{m,n};\Z)\cong\Z/2\oplus\Z/2 .
\]
For $n\ge1$ the group $\pi_1(X'_{m,n})$ is the dicyclic group of order $16mn$,
and for $n=0$ it is the infinite dihedral group $\Z\rtimes_{-1}\Z/2$.
\item The Stein fillings of each of these contact $3$--manifolds are pairwise
non-homeomorphic.
\end{enumerate}
\end{theorem}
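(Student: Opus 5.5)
The plan is to do everything through the standard presentation of $\pi_1$ of a Lefschetz fibration over $D^2$ with planar fiber. Since $\pi_1(\Sigma_0^6)$ is free on the loops $x_1,\dots,x_5$ around $\delta_1,\dots,\delta_5$ (the sixth being the product), $\pi_1(X(\Sigma;c_1,\dots,c_k))$ is the free group $F_5$ modulo the normal closure of words representing $c_1,\dots,c_k$. Homology of $X$ is the cokernel of the $5\times k$ matrix of homology classes of the vanishing cycles. For $Y_c$, $H_1$ is computed from the open book: $H_1(M(\Sigma,\mu))\cong\operatorname{coker}(\mu_*-\mathrm{id})$ on $H_1(\Sigma)$, or equivalently from the surgery diagram of $X$ with all five $2$-handles and the five boundary-parallel $1$-handles as dotted circles. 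Because $t_\alpha$ acts trivially on $H_1(\Sigma)$, and $t_u$ changes $c_{12}$ only by a multiple of $[u]=x_2+x_3$ in homology, the homological data is essentially independent of $n$. For $d_m$ it depends only on $m$ through $[d_m]=[c_{12}]\pm m[u]$.

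The first step is to write explicit words. Take $u=x_2x_3$, and $v,w,z$ from \cref{fig:baykur-curves} as products of the $x_i$ (up to conjugation). Take $\alpha=x_1x_2$, $c_{12}=x_1x_2$ and $c_{345}=x_3x_4x_5$. Then $t_u^m$ acts on the free group as conjugation of the appropriate generators by $u^{\pm m}$, and $t_\alpha^n$ similarly by $(x_1x_2)^{\pm n}$. This yields words for $d_m$, $d'_m$ and $t_\alpha^n(u),\dots$.

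Next, abelianize to get the $5\times5$ integer matrices. Computing the Smith normal forms gives $\det=\pm16$ in both cases, which shows $H_1(X)$ is finite, so $b_1=0$. Since $\chi(X)=1-5+5=1$, $b_2=0$ and $X$ is a rational homology ball. The Smith forms give $\Z/4$ and $\Z/2\oplus\Z/2$. For the boundary, $|H_1(Y)|=|H_1(X)|^2=16$, by the standard fact that the boundary of a $\Q HB^4$ has square-order $H_1$. To get the exact group structure I would compute the linking-matrix cokernel for $Y$ directly. This is a $5\times5$ or $10\times10$ integer computation, and it gives $\Z/4\oplus\Z/4$ versus $\Z/2\oplus\Z/2\oplus\Z/4$, which also proves the non-homeomorphism claim in (1).

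For $\pi_1$ I would mimic Baykur's reduction. The relators from $t_\alpha^n(u),\dots,t_\alpha^n(z)$ are conjugates of Baykur's words by powers of $x_1x_2$. Using the relators from $v,w,z$, I would eliminate three generators and reduce to two generators $r,b$, as Baykur does to obtain $\Z/(2n+1)\rtimes_{-1}\Z$ when $c=d$. Here his relation $d$ is replaced by the word for $d_m$ (resp.\ $d'_m$). Substituting, the extra relator should become $b^4=1$ together with a modification turning $r^{2n+1}$ into $r^{4mn+1}$ in case (2). In case (3) it should become $b^2=r^{4mn}$ after using $brb^{-1}=r^{-1}$.

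\textbf{The main obstacle} is this Tietze reduction: tracking how conjugation by $u^m$ and $(x_1x_2)^n$ interact, so that the exponent $4mn$ appears exactly. I would first do $m=1$ and small $n$ by hand (or by checking abelianizations and orders), then induct on $m$ using that $t_u^{m}(c)$ differs from $t_u^{m-1}(c)$ by conjugation by $u$.

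Finally, identify the groups. For (2), $\langle r,b\mid brb^{-1}=r^{-1}, r^{4mn+1}, b^4\rangle$ has order $4(4mn+1)$, and distinct $n$ give distinct orders. For (3), the presentation for $n\ge1$ is the standard dicyclic presentation of order $4\cdot 4mn=16mn$. For $n=0$ it becomes $b^2=1$, which gives the infinite dihedral group. In both families the orders of $\pi_1$ are distinct for distinct $n$ (finite for $n\ge1$, infinite for $n=0$ in (3)). Hence the fillings are pairwise non-homeomorphic, proving (4).
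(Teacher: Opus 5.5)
You have the paper's framework: the vanishing-cycle presentation of $\pi_1$, Baykur's reduction of $u,v,w,z$ to the Klein bottle group $Q=\langle r,b\mid brb^{-1}=r^{-1}\rangle$, one extra relator from $d_m$ or $d'_m$, $\chi=1$ for the rational homology ball claim, and distinct orders for (4). The gap is that the decisive computation is never done. You only predict what the extra relator ``should become'', and the route you sketch would not prove it.

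\textbf{The $\pi_1$ computation.} The relators $t_\alpha^n(u),\dots,t_\alpha^n(z)$ are not conjugates of Baykur's words. They are images under the automorphism $t_\alpha^n$, which conjugates only the letters $x_1,x_2$ and fixes $x_3,x_4,x_5$. Read literally, your description would make the normal closure, and hence $\pi_1$, independent of $n$, which contradicts Baykur's $\Z/(2n+1)\rtimes_{-1}\Z$. Likewise, $t_u^m(c)$ is obtained from $t_u^{m-1}(c)$ by conjugating only the letters $x_2,x_3$, not the whole word. So your induction on $m$ has no footing, and small cases prove nothing in general.

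\emph{The missing idea} is to apply $t_\alpha^{-n}$ to the whole relator set. It carries $\{c,t_\alpha^n(u),\dots,t_\alpha^n(z)\}$ to $\{t_\alpha^{-n}(c),u,v,w,z\}$, so $\pi_1(X_n(c))\cong Q/\ncl{\overline{t_\alpha^{-n}(c)}}$ with $Q$ independent of $n$. The composite $F_5\to Q$ is explicit: $x_1\mapsto r^{2n+1}b$, $x_2\mapsto r^{2n}b$, $x_3\mapsto b^{-1}$, $x_4\mapsto r^{-1}b^{-1}$, $x_5\mapsto r$, hence $u\mapsto r^{-2n}$. With $d_m=u^mx_2u^{-m}x_1$, $d'_m=x_5x_4u^mx_3u^{-m}$ and the normal form $r^ib^j$, you get $r^{-(4mn+1)}b^2$ and $r^{4mn}b^{-2}$ for all $m,n$ at once.

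In case (2) the resulting relation is $b^2=r^{\ell}$ with $\ell=4mn+1$, not ``$b^4=1$ and $r^{\ell}=1$''. Conjugating $b^2=r^\ell$ by $b$ gives $r^{2\ell}=1$, and the group is dicyclic of order $4\ell$, in which $r$ has order $2\ell$. Identifying it with $\Z/\ell\rtimes_{-1}\Z/4$ for odd $\ell$ needs one more step: show that $s=r^2$ has order $\ell$, that $b$ has order $4$, and that $\langle s\rangle\cap\langle b\rangle=1$.

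\textbf{The homology input.} On a planar page, any two closed curves have algebraic intersection $0$, so every Dehn twist acts trivially on $H_1(\Sigma)$. Hence $[d_m]=[c_{12}]=g_1+g_2$, not $[c_{12}]\pm m[u]$. Indeed, a simple closed curve on $\Sigma_0^6$ always has a $0/1$ class, and nothing here depends on $m$.

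For the same reason, $\operatorname{coker}(\mu_*-\mathrm{id})$ on $H_1(\Sigma)$ is all of $\Z^5$. That is the mapping-torus formula. For an open book you need the variation $[\phi(b_k)-b_k]$ of arcs $b_k$ from $\delta_0$ to $\delta_k$. By Picard--Lefschetz these are the rows of $AA^T$, where $A$ is the $0/1$ incidence matrix of holes and vanishing cycles, and this gives $\Z/4\oplus\Z/4$ and $\Z/2\oplus\Z/2\oplus\Z/4$.

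Also, the $5\times5$ matrix presenting $H_1(X)$ has determinant $\pm4$, not $\pm16$; $16$ is $\lvert H_1(Y)\rvert$. These are fixable, but as written the data feeding your linking-matrix computation for $Y$ is wrong.
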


The fillings in each family have the same homology, so they are distinguished
only by their fundamental groups.  The orders of the first homology groups fit
together as they must: $\lvert H_1(Y_m;\Z)\rvert=\lvert H_1(Y'_m;\Z)\rvert=16$
is the square of $\lvert H_1(X_{m,n};\Z)\rvert=\lvert H_1(X'_{m,n};\Z)\rvert=4$, in
accordance with the fact that $\lvert H_1(\partial W;\Z)\rvert=\lvert
H_1(W;\Z)\rvert^2$ for every rational homology ball $W$ with $H_2(W;\Z)=0$, for
example a Stein one.

Symplectic rational homology ball fillings of contact $3$--manifolds have been
studied extensively, see for example \cite{BhupalStipsicz,EOT,EOTlegs}.  For
instance, a closed, oriented $3$--manifold with finite fundamental group carries
at most six contact structures, up to isotopy, which are symplectically fillable
by rational homology balls \cite{EOTlegs}.  

To our knowledge,
$(Y_m,\xi_m)$ and $(Y'_m,\xi'_m)$ are the first contact
$3$--manifolds known to have infinitely many pairwise non-homeomorphic {\em rational
homology ball symplectic} fillings.  

Unlike Baykur's manifold $Y$, which is Seifert fibered over the Klein
bottle with $b_1(Y)=1$, the manifolds $Y_m$ and $Y'_m$ are rational homology
spheres with infinite fundamental groups.  Indeed, a Stein filling $X$ has a
handle decomposition with one $0$--handle and only $1$-- and $2$--handles.  Turned
upside down, $X$ is obtained from $\partial X\times[0,1]$ by attaching $2$--,
$3$-- and $4$--handles, so the inclusion induces a surjection
$\pi_1(\partial X)\to\pi_1(X)$.  Hence $\pi_1(Y_m)$ surjects onto the groups
$\pi_1(X_{m,n})$, $n\ge0$, whose orders $4(4mn+1)$ are unbounded, and
$\pi_1(Y'_m)$ surjects onto the infinite dihedral group $\pi_1(X'_{m,0})$.  More
generally, the same argument shows that a contact $3$--manifold with finite
fundamental group cannot have Stein fillings whose fundamental groups have
unbounded orders.  Finally, for one of these rational homology balls, $X_{1,0}$, we give an
explicit Kirby diagram, with a single $1$--handle and a single $2$--handle,
attached with framing $-8$ along a trefoil which runs four times over the
$1$--handle (\cref{fig:X0d1-trefoil}).

\smallskip
\noindent\emph{Baykur's example.}  Baykur does not identify the underlying
$3$--manifold $Y$.  We identify it, using an intermediate manifold: the
Lefschetz fibration $X_B=X(\Sigma_0^6;u,v,w,z)$ obtained by deleting the vanishing
cycle $d$, whose boundary $Y_B=M(\Sigma_0^6,B)$ is supported by the open book
$(\Sigma_0^6,B)$.  Baykur's fillings are
$X_n=X(\Sigma_0^6;d,\,t_\alpha^{\,n}(u),\,t_\alpha^{\,n}(v),\,t_\alpha^{\,n}(w),\,t_\alpha^{\,n}(z))$,
and since $t_\alpha^n$ carries $u,v,w,z$ to $t_\alpha^n(u),\ldots,t_\alpha^n(z)$,
every $X_n$ is obtained from a copy of $X_B$ by attaching a single $2$--handle (see \cref{fig:X0-DTK} for $n=0$).
In the following, $K$ denotes the Klein bottle, $DT^*K$ its disk cotangent
bundle, and $ST^*K$ its unit cotangent bundle.
\begin{theorem}\label[theorem]{thm:main}
\leavevmode
\begin{enumerate}
\item The manifold $Y$ is obtained by $(0,-8)$--surgery on the two-component
link $7^2_4$, with coefficient $0$ on the unknotted component and $-8$ on the
left-handed trefoil component.  It is Seifert fibered over the Klein bottle
with one exceptional fiber of type $(2,1)$.
\item $X_B$ is diffeomorphic to $DT^*K$, and $Y_B$ is
orientation-preservingly diffeomorphic to $ST^*K$.  Consequently, every Stein filling $X_n$ in Baykur's family is diffeomorphic to $DT^*K$ with a single $2$--handle
attached, and $Y$ is obtained from $ST^*K$ by an integral surgery along a knot.
\end{enumerate}
\end{theorem}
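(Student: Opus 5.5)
The plan is to work directly with Kirby diagrams coming from the planar open books. First I handle $X_B=X(\Sigma_0^6;u,v,w,z)$. I draw the page $\Sigma_0^6$ as $D^4$ with five $1$--handles in dotted-circle notation, one for each of the holes $\delta_1,\dots,\delta_5$ other than the outer boundary. Each vanishing cycle $u,v,w,z$ then becomes a $2$--handle with framing $-1$ relative to the page framing.

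Next I simplify. The round curves among $u,v,w,z$ (for instance $u$, enclosing $\delta_2,\delta_3$) link dotted circles geometrically once each. I use these $2$--handles to cancel $1$--handles, sliding the remaining $2$--handles off first where necessary, until one $1$--handle and one $2$--handle remain. The target is the standard diagram of $DT^*K$: a dotted circle together with a $0$--framed $2$--handle that runs over it twice with the same orientation, possibly up to twisting. I would identify it with $DT^*K$ by comparing to the known handle picture of the disk cotangent bundle of the Klein bottle. Useful invariants here are $\pi_1=\langle a,b\mid bab^{-1}=a^{-1}\rangle$ and intersection form $0$ on $H_2=\Z$.

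Since the Stein structure induces the boundary orientation, the diffeomorphism $X_B\cong DT^*K$ restricts to an orientation-preserving diffeomorphism $Y_B\cong ST^*K$. Every $X_n$ is then a copy of $X_B$ (via $t_\alpha^n$) with the $2$--handle along $d$ attached, which gives the consequences stated in (2).

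For (1), I track the image of $d$ through the same handle moves. This yields a two-component diagram: the dotted circle, which after turning to surgery notation is a $0$--framed unknot, together with a knot that winds over it. I then compute its framing by the usual rule, the page framing minus one adjusted by the handle slides. Further isotopy should identify the link as $7^2_4$ with coefficients $(0,-8)$ and the knot component as a left-handed trefoil.

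For the Seifert structure, I view $ST^*K$ as a circle bundle over $K$. The surgery knot should be isotopic to a regular fiber, or become one after the slides. Surgery along a fiber with framing differing from the fiber framing by $\pm2$ then produces one exceptional fiber of type $(2,1)$.

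As a consistency check, I compute $H_1(Y)$ and $\pi_1(Y)$ from the surgery description. These should have $b_1=1$ and the presentation of a Seifert fibered space over $K$ with one $(2,1)$ fiber. Where possible I would cite known classifications of Seifert manifolds over $K$ to pin down the normalized invariants.

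The main obstacle is the explicit diagram bookkeeping. One part is tracking the curve $d$, which is not round, through the $1$--handle cancellations. The other is recognizing the resulting link as $7^2_4$ and checking that the framing is exactly $-8$, including the sign conventions for the trefoil's handedness. I would first try to cancel the $1$--handles in the order that leaves $d$ simplest, and confirm the framing by a linking matrix and $H_1$ computation.
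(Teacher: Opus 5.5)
Your strategy for (2) and for the surgery description in (1) is the paper's: Kirby calculus on the Lefschetz fibration diagram, cancelling $1$--handles against vanishing cycles. The endpoint you aim for, however, is wrong. $X_B$ has five $1$--handles and four $2$--handles, so $\chi(X_B)=0=\chi(K)$, and each cancellation removes one handle of each index. You therefore cannot end with one $1$--handle and one $2$--handle: that has $\chi=1$ and cyclic $\pi_1$, contradicting the Klein bottle group you list yourself. Also $H_2(DT^*K;\Z)\cong H_2(K;\Z)=0$, not $\Z$.

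What you actually get is this: slide $h_w$ over $h_v$, $h_z$, $h_u$ in turn, and cancel these against the $1$--handles dual to $\delta_5,\delta_4,\delta_3$. This leaves two dotted circles and one $2$--handle of framing $-4$ running twice in the same direction over each. That is the standard diagram of the disk bundle over $K$ with Euler number $0$. Consequently, in (1), tracking $d$ leaves a $(-2)$--framed $h_d'$ running once over each $1$--handle. You still need to slide the $(-4)$--framed handle twice over $h_d'$ (framing $-8$) and cancel $h_d'$ before reaching a two-component picture.

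Also, ``the Stein structure induces the boundary orientation'' does not show that $X_B\to DT^*K$ preserves orientation. The paper uses that $DT^*K$ has an orientation-reversing self-diffeomorphism: fiberwise reflection across a line field, which exists since $\chi(K)=0$.

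The Seifert step fails as planned. In $X_0=X_B\cup h_d$ the attaching circle $d=x_3x_1$ maps to $b^{-1}(rb)=r^{-1}\neq 1$ in $\pi_1(DT^*K)=\langle r,b\mid brb^{-1}=r^{-1}\rangle$. A regular fiber of $ST^*K\to K$, by contrast, bounds a fiber disk and is null-homotopic in $DT^*K$. Slides of $h_d$ over the handles of $DT^*K$ are isotopies in $ST^*K$, so $d$ never ``becomes'' a fiber. Nor does any other decomposition $X_0\cong DT^*K\cup(\text{fiber }2\text{--handle})$ exist: it would force $\pi_1(X_0)\cong\pi_1(K)$, whereas $\pi_1(X_0)\cong\Z$.

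The missing idea is to pass to a different Stein filling of $(Y,\xi)$. By Baykur's generalized lantern relation $Bt_\alpha t_\beta=\Delta$, the half twist $\sigma_1$ exchanging $\delta_1,\delta_2$ commutes with $B$. This gives $\phi=t_d\,t_d\,t_{v'}t_{w'}t_{z'}$ and a filling $W_0$ with $\pi_1(W_0)\cong\pi_1(K)$. Take the $X_B$ moves conjugated by $\sigma_1$, and add a slide of one copy of $d$ over the other to produce a $(-2)$--framed meridian. This shows $W_0\cong DT^*K\cup h$, with $h$ attached along a fiber with framing $-2$ relative to the bundle framing. Only then does your Dehn-filling argument apply, giving one exceptional fiber of multiplicity $2$, with the $(2,1)$, $b=0$ normalization holding up to orientation.
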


We make a few comments on the statement and its proof.

\smallskip
\noindent\emph{The smooth topology of $Y$.}  Part~(1) is proved by Kirby
calculus.  The surgery description comes from the diagram of Baykur's
Lefschetz fibration, which reduces to a two-component surgery diagram.  For
the Seifert fibration we use instead one of the new fillings constructed
below, the Stein filling $W_0$ with $\pi_1(W_0)\cong\pi_1(K)$. Handle slides
show that $W_0$ is diffeomorphic to $DT^*K$ with a single $2$--handle
attached along a circle fiber of $ST^*K\to K$, with framing $-2$ relative to
the framing given by the bundle (\cref{prop:W0}).  Hence $Y$ is obtained from
$ST^*K$ by Dehn surgery along a regular fiber, and the Seifert fibration of
$ST^*K$ extends over the surgery torus with one exceptional fiber of
multiplicity two.

\smallskip
\noindent\emph{The disk cotangent bundle.}  Part~(2) is proved by Kirby
calculus (\cref{thm:klein-main}).  

\smallskip
\noindent\emph{New fillings.}  Besides $t_\alpha$, two half twists, each
interchanging a pair of holes, also commute with $B$, by Baykur's generalized lantern
relation, and conjugating $B$ by them while keeping $t_d$ fixed produces
positive factorizations of $\phi$ of a new homological type.  For $m\ge0$,
the group $\mathbb Z/m\rtimes_{-1}\mathbb Z$ is the Klein bottle group
$\pi_1(K)$ for $m=0$, and it is $\mathbb Z$ for $m=1$.
Baykur's fillings realize the odd values of $m$.

\begin{theorem}\label[theorem]{thm:newfillings}
For every integer $m\ge0$ the contact $3$--manifold $(Y,\xi)$ admits a Stein filling $W_m$ with
\[
\pi_1(W_m)\cong\mathbb Z/m\rtimes_{-1}\mathbb Z,\qquad \chi(W_m)=1,\qquad
H_1(W_m;\mathbb Z)\cong\begin{cases}\mathbb Z & m\ \text{odd},\\ \mathbb Z\oplus\mathbb Z/2 & m\ \text{even}.\end{cases}
\]
The fillings $W_m$ are pairwise non-homeomorphic.  For $m$ odd, $W_m$ is Baykur's filling $X_{(m-1)/2}$.
\end{theorem}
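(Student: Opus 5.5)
The plan is to construct $W_m$ for even $m$ by conjugating $B$ by powers of a mapping class that commutes with $B$, and to read off $\pi_1$ from the vanishing cycles. Let $h$ be one of the two half twists interchanging a pair of holes that commute with $B$ (from Baykur's generalized lantern relation). We set
\[
W_{2k}=X(\Sigma_0^6;d,\,g_k(u),g_k(v),g_k(w),g_k(z)),
\]
where $g_k$ is a suitable product such as $t_\alpha^{k}h$ (or $h\,t_\alpha^{k}$). Since $g_kBg_k^{-1}=B$, the word $t_d\,g_kBg_k^{-1}$ is a positive factorization of $\phi$, so $W_{2k}$ is a Stein filling of $(Y,\xi)$. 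For odd $m=2k+1$ we simply take $W_m=X_k$, Baykur's filling, whose group is $\Z/(2k+1)\rtimes_{-1}\Z$; this gives the last sentence of the statement.

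We then compute $\pi_1$. The free group $\pi_1(\Sigma_0^6)=F_5$ is generated by loops $x_1,\dots,x_5$ around $\delta_1,\dots,\delta_5$. The fundamental group of the filling is $F_5$ modulo the normal closure of words representing the five vanishing cycles. We write $d$ and $u,v,w,z$ as words in the $x_i$ as in \cref{fig:baykur-curves}. The automorphisms induced by $t_\alpha$ and $h$ are explicit: $t_\alpha$ conjugates $x_1,x_2$ by $x_1x_2$, and $h$ swaps two generators with a conjugation. Applying them gives the relators. Following Baykur's computation, we use Tietze moves to eliminate generators and reduce to the presentation $\langle r,b\mid brb^{-1}=r^{-1},\ r^{m}=1\rangle$.

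The point of the half twist is that it changes the homological type. Without $h$ the relator coming from $d$ forces an odd exponent $2k+1$; composing with $h$ should shift it to $2k$. In particular $k=0$ should give the Klein bottle group, matching $W_0$ of \cref{prop:W0}.

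The remaining invariants follow from this presentation. Abelianizing gives $H_1=\Z\oplus\Z/\gcd(2,m)$, which is the claimed $\Z$ or $\Z\oplus\Z/2$. Since there are one $0$--handle, five $1$--handles and five $2$--handles, $\chi=1-5+5=1$.

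For pairwise non-homeomorphism we compare fundamental groups. In $\Z/m\rtimes_{-1}\Z$ the torsion subgroup is $\Z/m$ for $m\ge1$ (elements $r^a b^j$ with $j\ne0$ have infinite order), and it is trivial for $m=0$. The group for $m=0$ is moreover nonabelian, unlike the one for $m=1$, so $m=0$ is also distinguished from $m=1$. Hence the torsion subgroup, together with this observation, recovers $m$, and the groups are pairwise non-isomorphic.

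The main obstacle is choosing the conjugating class $g_k$ and carrying out the Tietze reduction so that the relator exponent comes out to exactly $2k$ for every $k$. If $t_\alpha^k h$ does not give this, I would first try conjugating by the second half twist, or by $h\,t_\alpha^k$, and track the image of the relator coming from $d$. That relator is the only one not conjugated, so it alone determines $m$.
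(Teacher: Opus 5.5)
Your construction is the paper's: conjugate $B$ by a half twist composed with a power of $t_\alpha$, keep $t_d$, and read $\pi_1$ off the vanishing cycles. Your treatment of $H_1$, $\chi$ and pairwise non-isomorphism is fine.

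\textbf{The gap.} You never compute the group of $W_{2k}$, which is the step that actually proves the theorem. You only say the exponent ``should'' become $2k$, and you list fallbacks in case it does not. This is not a formality, because the answer depends on which half twist you use and in which direction. The order $t_\alpha^kh$ versus $h\,t_\alpha^k$ is irrelevant, since both half twists commute with $t_\alpha$. Take the clockwise half twist $\sigma_3$ of \eqref{eq:artin} and $f=\sigma_3t_\alpha^k$. Then $\sigma_3^{-1}(d)=x_4x_1$, and
\[
\overline{f^{-1}(d)}=\overline{x_4\,\alpha^{-k}x_1\alpha^{k}}=r^{-1}b^{-1}r^{2k+1}b=r^{-(2k+2)} .
\]
So this choice gives $\Z/(2k+2)\rtimes_{-1}\Z$, and $k=0$ does not give the Klein bottle group. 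With $\sigma_1$ the construction works.

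\textbf{How to close it.} Avoid Tietze-reducing five conjugated relators. Since $f$ is supported in $D_\alpha\cup D_\beta$, it fixes $\ast\in\delta_0$ and induces an automorphism of $F_5$. Applying $f^{-1}$ gives
\[
F_5/\ncl{d,f(u),f(v),f(w),f(z)}\cong Q/\ncl{\overline{f^{-1}(d)}} \quad\text{(\cref{lem:pi1})}.
\]
Here $Q=F_5/\ncl{u,v,w,z}=\langle r,b\mid brb^{-1}=r^{-1}\rangle$ is computed once, with $\overline{x_1}=rb$, $\overline{x_2}=b$, $\overline{x_3}=b^{-1}$. This is the precise form of your remark that $d$ is the only relator not conjugated.

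Now take $f_n=\sigma_1t_\alpha^n$.
\begin{itemize}
\item First, $\sigma_1^{-1}(x_3x_1)=x_3x_2$.
\item Then $t_\alpha^{-n}$ gives $x_3\,\alpha^{-n}x_2\alpha^n$ with $\alpha=x_2x_1$. In this convention the boundary word is $x_2x_1$, not $x_1x_2$ as you wrote.
\item Since $\overline{\alpha}=r^{-1}b^2$ with $b^2$ central, conjugation by $\overline{\alpha}^{\,-n}$ is conjugation by $r^n$.
\item So the image is $b^{-1}r^nbr^{-n}=r^{-2n}$.
\end{itemize}
Hence $\pi_1(W_{2n})\cong Q/\ncl{r^{2n}}\cong\Z/2n\rtimes_{-1}\Z$ for every $n\ge0$.

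You should also note that $\sigma_1$ permutes boundary components, so it does not lie in $\Mod(\Sigma,\partial\Sigma)$. It is only used to produce the curves. The identity $t_{f(u)}t_{f(v)}t_{f(w)}t_{f(z)}=B$ holds in $\Mod(\Sigma,\partial\Sigma)$ because $f$ fixes $\alpha$ and $\beta$. It also swaps only holes with equal exponents in $\Delta$, so it preserves $B=\Delta t_\beta^{-1}t_\alpha^{-1}$.
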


Moreover, every Stein filling of $(Y,\xi)$ has first homology $\Z$ or
$\Z\oplus\Z/2$ (\cref{prop:allfillings}), so the fillings $W_m$ realize both
possibilities.

The new fillings are again diffeomorphic to $DT^*K$ with a single $2$--handle
attached: each of them is obtained by attaching a $2$--handle along $d$ to a
Lefschetz fibration $X(\Sigma_0^6;f(u),f(v),f(w),f(z))$, where $f$ is a diffeomorphism
of the page commuting with $B$, and $f$ identifies this Lefschetz fibration with
$X_B$.
For $m=0$ the attaching curve is trivial in $\pi_1(DT^*K)$, exactly as the
circle fiber of $ST^*K\to K$ is, and indeed the filling $W_0$ is
diffeomorphic to $DT^*K$ with a $2$--handle attached along a circle fiber
(\cref{prop:W0}).

\smallskip
\noindent\emph{Outline.}  In \cref{sec:baykur} we prove
\cref{prop:baykur-surgery}, the surgery description of $Y$, and state
\cref{prop:baykur-seifert}, the Seifert fibration of $Y$.  Together they give
\cref{thm:main}(1).  In \cref{sec:klein} we prove, by Kirby calculus, that
$X_B$ is diffeomorphic to $DT^*K$, which gives \cref{thm:main}(2).  In
\cref{sec:newfillings} we construct the fillings $W_m$ and prove
\cref{thm:newfillings}, and in \cref{subsec:W0} we show that $W_0$ is
diffeomorphic to $DT^*K$ with a $2$--handle attached along a circle fiber, which
proves \cref{prop:baykur-seifert}.  In \cref{sec:vary} we prove \cref{thm:ball},
and we describe the filling $X_{1,0}$ by explicit Kirby diagrams.  Finally, in
\cref{sec:computer} we collect the computations which we carried out by computer
and which are not used in any proof.

\medskip
\noindent\emph{Acknowledgments.}  The author thanks Marc Kegel for pointing out
that the identification of Baykur's manifold as a Seifert fibered space can be
confirmed with SnapPy and Regina (\cref{subsec:computer-Y}). 

\medskip
\noindent\emph{Declaration on the use of AI.}  In the preparation of this paper
the author used LLM tools for writing the scripts for
the computer checks in \cref{sec:computer} and in the ancillary files, for
drawing the figures in TikZ from the author's Kirby diagrams, and for editing the
text to improve the presentation.  The author takes full responsibility for the
content of the paper.

\section{The smooth topology of Baykur's manifold}\label{sec:baykur}

Baykur \cite{BaykurPlanar} recently constructed a planar contact $3$--manifold
$(Y,\xi)$ supported by the open book with page the six-holed sphere 
$\Sigma_0^6$ and monodromy $\phi=t_d t_u t_v t_w t_z \in \Mod(\Sigma_0^6,\partial\Sigma_0^6)$, 
and proved that $(Y,\xi)$ has infinitely many pairwise non-homeomorphic
Stein fillings.  The page $\Sigma_0^6$ is drawn as a disk with outer
boundary $\delta_0$ and five holes $\delta_1,\ldots,\delta_5$ ordered from
left to right, and the five curves $u,v,w,z,d$ are those of
\cite[Figure~2]{BaykurPlanar}, reproduced in \cref{fig:baykur-curves}.
We identify the underlying $3$--manifold $Y$,
first by an explicit surgery diagram and then as a Seifert fibered space.
The proof of the second identification uses one of the Stein fillings of
\cref{sec:newfillings} and is given in \cref{subsec:W0}.
These identifications do not appear in \cite{BaykurPlanar}.

\begin{figure}[ht]
\centering
\makebox[\textwidth][c]{%
\begin{tikzpicture}[scale=0.3]
\begin{scope}[shift={(0.0,0.0)}]
\draw[thick,rounded corners=10pt] (0,-2.75) rectangle (15.600000000000001,2.75);
\draw[thick] (2.60,0.00) circle (0.35);
\draw[thick] (5.20,0.00) circle (0.35);
\draw[thick] (7.80,0.00) circle (0.35);
\draw[thick] (10.40,0.00) circle (0.35);
\draw[thick] (13.00,0.00) circle (0.35);
\draw[orange!85!black,thick] plot[smooth cycle,tension=0.7] coordinates{(5.14,-0.69) (5.05,-0.68) (4.97,-0.66) (4.90,-0.63) (4.83,-0.59) (4.77,-0.55) (4.72,-0.50) (4.67,-0.45) (4.62,-0.39) (4.59,-0.33) (4.55,-0.26) (4.53,-0.19) (4.51,-0.10) (4.50,-0.01) (4.51,0.08) (4.52,0.16) (4.55,0.24) (4.58,0.31) (4.61,0.38) (4.65,0.44) (4.70,0.49) (4.76,0.54) (4.81,0.58) (4.88,0.62) (4.95,0.65) (5.03,0.68) (5.12,0.69) (5.21,0.70) (5.31,0.70) (5.42,0.70) (5.52,0.70) (5.63,0.70) (5.73,0.70) (5.83,0.70) (5.94,0.70) (6.04,0.70) (6.14,0.70) (6.25,0.70) (6.35,0.70) (6.46,0.70) (6.56,0.70) (6.66,0.70) (6.77,0.70) (6.87,0.70) (6.97,0.70) (7.08,0.70) (7.18,0.70) (7.29,0.70) (7.39,0.70) (7.49,0.70) (7.60,0.70) (7.70,0.70) (7.80,0.70) (7.90,0.69) (7.98,0.67) (8.06,0.65) (8.13,0.61) (8.19,0.58) (8.25,0.53) (8.30,0.48) (8.35,0.43) (8.39,0.37) (8.43,0.30) (8.46,0.23) (8.48,0.15) (8.49,0.06) (8.50,-0.03) (8.49,-0.13) (8.47,-0.21) (8.44,-0.28) (8.40,-0.35) (8.36,-0.41) (8.32,-0.47) (8.27,-0.52) (8.21,-0.56) (8.15,-0.60) (8.08,-0.64) (8.01,-0.67) (7.92,-0.69) (7.83,-0.70) (7.73,-0.70) (7.63,-0.70) (7.52,-0.70) (7.42,-0.70) (7.32,-0.70) (7.21,-0.70) (7.11,-0.70) (7.01,-0.70) (6.90,-0.70) (6.80,-0.70) (6.69,-0.70) (6.59,-0.70) (6.49,-0.70) (6.38,-0.70) (6.28,-0.70) (6.18,-0.70) (6.07,-0.70) (5.97,-0.70) (5.86,-0.70) (5.76,-0.70) (5.66,-0.70) (5.55,-0.70) (5.45,-0.70) (5.34,-0.70) (5.24,-0.70) (5.15,-0.70)};
\node[orange!85!black] at (6.50,1.55) {$u$};
\end{scope}
\begin{scope}[shift={(16.8,0.0)}]
\draw[thick,rounded corners=10pt] (0,-2.75) rectangle (15.600000000000001,2.75);
\draw[thick] (2.60,0.00) circle (0.35);
\draw[thick] (5.20,0.00) circle (0.35);
\draw[thick] (7.80,0.00) circle (0.35);
\draw[thick] (10.40,0.00) circle (0.35);
\draw[thick] (13.00,0.00) circle (0.35);
\draw[violet,thick] plot[smooth cycle,tension=0.7] coordinates{(5.17,-2.35) (4.94,-2.30) (4.77,-2.20) (4.63,-2.06) (4.54,-1.88) (4.50,-1.65) (4.50,-1.38) (4.50,-1.11) (4.50,-0.84) (4.50,-0.57) (4.50,-0.30) (4.50,-0.03) (4.53,0.21) (4.62,0.39) (4.76,0.54) (4.93,0.64) (5.15,0.69) (5.40,0.67) (5.58,0.58) (5.73,0.45) (5.84,0.28) (5.89,0.07) (5.90,-0.20) (5.90,-0.47) (5.90,-0.74) (5.97,-0.94) (6.24,-0.95) (6.52,-0.95) (6.80,-0.95) (7.08,-0.95) (7.36,-0.95) (7.64,-0.95) (7.92,-0.95) (8.20,-0.95) (8.48,-0.95) (8.76,-0.95) (9.04,-0.95) (9.32,-0.95) (9.60,-0.95) (9.70,-0.78) (9.70,-0.51) (9.70,-0.24) (9.70,0.03) (9.75,0.26) (9.85,0.43) (9.99,0.57) (10.17,0.66) (10.41,0.70) (10.69,0.70) (10.97,0.70) (11.25,0.70) (11.53,0.70) (11.81,0.70) (12.09,0.70) (12.37,0.70) (12.65,0.70) (12.93,0.70) (13.18,0.67) (13.38,0.59) (13.52,0.46) (13.63,0.29) (13.69,0.08) (13.68,-0.17) (13.60,-0.36) (13.47,-0.51) (13.31,-0.62) (13.10,-0.69) (12.83,-0.70) (12.55,-0.70) (12.27,-0.70) (11.99,-0.70) (11.71,-0.70) (11.43,-0.70) (11.16,-0.71) (11.10,-0.93) (11.10,-1.20) (11.10,-1.47) (11.09,-1.73) (11.03,-1.94) (10.93,-2.11) (10.78,-2.24) (10.59,-2.32) (10.34,-2.35) (10.06,-2.35) (9.78,-2.35) (9.50,-2.35) (9.22,-2.35) (8.94,-2.35) (8.66,-2.35) (8.38,-2.35) (8.10,-2.35) (7.82,-2.35) (7.54,-2.35) (7.25,-2.35) (6.97,-2.35) (6.69,-2.35) (6.41,-2.35) (6.13,-2.35) (5.85,-2.35) (5.57,-2.35) (5.29,-2.35)};
\node[violet] at (11.70,1.55) {$v$};
\end{scope}
\begin{scope}[shift={(33.6,0.0)}]
\draw[thick,rounded corners=10pt] (0,-2.75) rectangle (15.600000000000001,2.75);
\draw[thick] (2.60,0.00) circle (0.35);
\draw[thick] (5.20,0.00) circle (0.35);
\draw[thick] (7.80,0.00) circle (0.35);
\draw[thick] (10.40,0.00) circle (0.35);
\draw[thick] (13.00,0.00) circle (0.35);
\draw[green!55!black,thick] plot[smooth cycle,tension=0.7] coordinates{(2.53,-0.69) (2.25,-0.60) (2.05,-0.43) (1.92,-0.17) (1.90,0.18) (1.90,0.55) (1.90,0.92) (1.90,1.29) (1.90,1.66) (1.97,1.96) (2.14,2.17) (2.38,2.31) (2.70,2.35) (3.00,2.35) (3.32,2.35) (3.62,2.35) (3.94,2.35) (4.24,2.35) (4.55,2.35) (4.87,2.35) (5.17,2.35) (5.48,2.35) (5.79,2.35) (6.10,2.35) (6.41,2.35) (6.72,2.35) (7.03,2.35) (7.34,2.35) (7.65,2.35) (7.96,2.35) (8.27,2.35) (8.58,2.35) (8.89,2.35) (9.20,2.35) (9.51,2.35) (9.82,2.35) (10.13,2.35) (10.44,2.35) (10.75,2.35) (11.06,2.35) (11.37,2.35) (11.68,2.35) (11.99,2.35) (12.30,2.35) (12.61,2.35) (12.92,2.35) (13.24,2.31) (13.47,2.16) (13.63,1.95) (13.70,1.64) (13.70,1.27) (13.70,0.90) (13.70,0.53) (13.70,0.16) (13.67,-0.19) (13.54,-0.44) (13.34,-0.61) (13.04,-0.70) (12.73,-0.64) (12.50,-0.49) (12.35,-0.26) (12.30,0.06) (12.30,0.43) (12.30,0.80) (12.07,0.95) (11.69,0.95) (11.30,0.95) (10.92,0.95) (10.54,0.95) (10.15,0.95) (9.77,0.95) (9.38,0.95) (9.00,0.95) (8.62,0.95) (8.50,0.70) (8.50,0.33) (8.50,-0.04) (8.41,-0.34) (8.24,-0.54) (7.99,-0.67) (7.66,-0.68) (7.39,-0.57) (7.21,-0.37) (7.11,-0.10) (7.10,0.27) (7.10,0.64) (7.04,0.94) (6.66,0.95) (6.28,0.95) (5.89,0.95) (5.51,0.95) (5.13,0.95) (4.74,0.95) (4.36,0.95) (3.97,0.95) (3.59,0.95) (3.30,0.85) (3.30,0.49) (3.30,0.12) (3.26,-0.22) (3.13,-0.46) (2.92,-0.62) (2.61,-0.70)};
\node[green!55!black] at (7.80,-1.50) {$w$};
\end{scope}
\begin{scope}[shift={(8.4,-6.3)}]
\draw[thick,rounded corners=10pt] (0,-2.75) rectangle (15.600000000000001,2.75);
\draw[thick] (2.60,0.00) circle (0.35);
\draw[thick] (5.20,0.00) circle (0.35);
\draw[thick] (7.80,0.00) circle (0.35);
\draw[thick] (10.40,0.00) circle (0.35);
\draw[thick] (13.00,0.00) circle (0.35);
\draw[blue!70!black,thick] plot[smooth cycle,tension=0.7] coordinates{(6.48,-2.35) (6.21,-2.28) (6.01,-2.14) (5.87,-1.95) (5.80,-1.68) (5.80,-1.36) (5.80,-1.03) (5.80,-0.70) (5.80,-0.37) (5.80,-0.04) (5.80,0.30) (5.80,0.63) (5.76,0.92) (5.45,0.95) (5.11,0.95) (4.76,0.95) (4.42,0.95) (4.08,0.95) (3.74,0.95) (3.40,0.95) (3.30,0.71) (3.30,0.38) (3.30,0.05) (3.26,-0.24) (3.13,-0.45) (2.95,-0.60) (2.70,-0.69) (2.40,-0.67) (2.18,-0.55) (2.01,-0.38) (1.92,-0.14) (1.90,0.18) (1.90,0.51) (1.90,0.84) (1.90,1.17) (1.90,1.50) (1.92,1.81) (2.02,2.04) (2.19,2.21) (2.42,2.32) (2.73,2.35) (3.07,2.35) (3.41,2.35) (3.76,2.35) (4.10,2.35) (4.44,2.35) (4.78,2.35) (5.13,2.35) (5.47,2.35) (5.81,2.35) (6.15,2.35) (6.49,2.35) (6.77,2.29) (6.98,2.16) (7.12,1.97) (7.19,1.70) (7.20,1.38) (7.20,1.05) (7.20,0.72) (7.20,0.39) (7.20,0.06) (7.20,-0.28) (7.20,-0.61) (7.22,-0.91) (7.52,-0.95) (7.87,-0.95) (8.21,-0.95) (8.55,-0.95) (8.90,-0.95) (9.24,-0.95) (9.58,-0.95) (9.70,-0.74) (9.70,-0.41) (9.70,-0.08) (9.74,0.22) (9.86,0.44) (10.03,0.59) (10.28,0.69) (10.58,0.67) (10.81,0.57) (10.98,0.39) (11.08,0.16) (11.10,-0.15) (11.10,-0.48) (11.10,-0.81) (11.10,-1.14) (11.10,-1.47) (11.08,-1.78) (10.99,-2.02) (10.83,-2.20) (10.61,-2.32) (10.31,-2.35) (9.96,-2.35) (9.62,-2.35) (9.28,-2.35) (8.94,-2.35) (8.59,-2.35) (8.25,-2.35) (7.91,-2.35) (7.57,-2.35) (7.22,-2.35) (6.88,-2.35) (6.54,-2.35)};
\node[blue!70!black] at (10.40,1.50) {$z$};
\end{scope}
\begin{scope}[shift={(25.2,-6.3)}]
\draw[thick,rounded corners=10pt] (0,-2.75) rectangle (15.600000000000001,2.75);
\draw[thick] (2.60,0.00) circle (0.35);
\draw[thick] (5.20,0.00) circle (0.35);
\draw[thick] (7.80,0.00) circle (0.35);
\draw[thick] (10.40,0.00) circle (0.35);
\draw[thick] (13.00,0.00) circle (0.35);
\draw[teal!80!black,thick] plot[smooth cycle,tension=0.7] coordinates{(2.57,-2.35) (2.39,-2.31) (2.23,-2.24) (2.11,-2.14) (2.01,-2.02) (1.94,-1.87) (1.90,-1.68) (1.90,-1.46) (1.90,-1.24) (1.90,-1.02) (1.90,-0.80) (1.90,-0.58) (1.90,-0.36) (1.90,-0.14) (1.91,0.08) (1.95,0.26) (2.03,0.40) (2.14,0.52) (2.27,0.61) (2.43,0.68) (2.63,0.70) (2.82,0.66) (2.98,0.59) (3.10,0.49) (3.20,0.36) (3.27,0.21) (3.30,0.02) (3.30,-0.20) (3.30,-0.42) (3.30,-0.64) (3.30,-0.85) (3.43,-0.95) (3.66,-0.95) (3.89,-0.95) (4.12,-0.95) (4.35,-0.95) (4.58,-0.95) (4.80,-0.95) (5.03,-0.95) (5.26,-0.95) (5.49,-0.95) (5.72,-0.95) (5.95,-0.95) (6.18,-0.95) (6.40,-0.95) (6.63,-0.95) (6.86,-0.95) (7.06,-0.93) (7.10,-0.75) (7.10,-0.53) (7.10,-0.31) (7.10,-0.09) (7.11,0.13) (7.17,0.29) (7.25,0.43) (7.36,0.55) (7.50,0.63) (7.68,0.69) (7.88,0.69) (8.06,0.65) (8.21,0.57) (8.32,0.46) (8.42,0.33) (8.48,0.17) (8.50,-0.04) (8.50,-0.26) (8.50,-0.48) (8.50,-0.70) (8.50,-0.92) (8.50,-1.14) (8.50,-1.36) (8.50,-1.58) (8.48,-1.79) (8.43,-1.95) (8.34,-2.09) (8.23,-2.20) (8.09,-2.29) (7.91,-2.34) (7.70,-2.35) (7.47,-2.35) (7.24,-2.35) (7.02,-2.35) (6.79,-2.35) (6.56,-2.35) (6.33,-2.35) (6.10,-2.35) (5.87,-2.35) (5.65,-2.35) (5.42,-2.35) (5.19,-2.35) (4.96,-2.35) (4.73,-2.35) (4.50,-2.35) (4.28,-2.35) (4.05,-2.35) (3.82,-2.35) (3.59,-2.35) (3.36,-2.35) (3.13,-2.35) (2.91,-2.35) (2.68,-2.35)};
\node[teal!80!black] at (7.80,1.50) {$d$};
\end{scope}
\end{tikzpicture}}
\caption{Baykur's curves $u,v,w,z,d$ on the page $\Sigma_0^6$.}
\label{fig:baykur-curves}
\end{figure}

\begin{proposition}\label[proposition]{prop:baykur-surgery}
Baykur's manifold $Y$ is the result of surgery on the two-component link
$L=7^2_4$ shown on the right of \cref{fig:final-kirby}, with coefficient $0$ on the unknotted
component $U$ and coefficient $-8$ on the trefoil component $J$, which is
left-handed.
\end{proposition}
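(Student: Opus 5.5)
I will start from the standard Kirby diagram of a planar open book: $Y=M(\Sigma_0^6,\phi)$ is the boundary of the Lefschetz fibration $X(\Sigma_0^6;d,u,v,w,z)$. Concretely, $\Sigma_0^6\times D^2$ is drawn as five dotted unknots (one for each inner hole $\delta_1,\dots,\delta_5$), and each vanishing cycle $c\in\{d,u,v,w,z\}$ is a $2$--handle attached along a copy of $c$ pushed into successive pages, with framing $-1$ relative to the page framing. Passing to the boundary, I replace every dotted circle by a $0$--framed unknot. This gives a $10$--component integral surgery diagram for $Y$: five $0$--framed meridian unknots $\mu_1,\dots,\mu_5$ and five curves $d,u,v,w,z$, each linking the $\mu_i$ of the holes it encloses. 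Each such curve has framing equal to its algebraic linking with the $\mu_i$'s minus one. For example, $u$ encloses $\delta_2,\delta_3$, so its framing is $(-1)$ relative to the blackboard page framing, which becomes $-1$ after accounting for the pairwise linking bookkeeping. I will record these framings carefully from \cref{fig:baykur-curves}.

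Next I simplify by Kirby calculus, working on the $3$--manifold so that I may also use blow-downs and slam dunks. The round curve $u$ encircles only two holes, so $u$ together with $\mu_2,\mu_3$ forms a small chain. I will use $u$, or a handle slide of one $\mu$ over another, to cancel pairs. The general mechanism is this: a $0$--framed meridian $\mu_i$ linking exactly one other component once allows that component to be cancelled. More precisely, $\mu_i$ together with a $\pm1$-- or $0$--framed strand passing once through it can be removed after sliding everything else off. Since the page has five holes and there are five vanishing cycles, I expect to cancel four $(\mu_i,c)$ pairs. Concretely, I will pair $\mu_2$ with $u$, $\mu_1$ with $d$, $\mu_4$ with $v$, and $\mu_5$ with $w$, choosing the pairs by which curves pass around which holes as few times as possible. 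Each cancellation is done by sliding all other strands through $\mu_i$ off it using the partner curve, then deleting the cancelling pair. The remaining two components are one meridian (I expect $\mu_3$, giving the unknot $U$ with framing $0$) and the image of $z$, which has become knotted after the slides.

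I then identify the result. The framing of the surviving $z$-component is computed from the linking matrix, which is invariant under handle slides up to the corresponding change of basis. As a check, the linking form of the final two-component diagram must present $H_1(Y)$, and Baykur's computation gives $b_1(Y)=1$. The matrix $\begin{pmatrix}0&2\\2&-8\end{pmatrix}$ is not correct on its own, so I will instead insist on the linking number being compatible with $\pi_1(X_n)$: the linking number of $U$ and $J$ must be $0$ or even so that $b_1=1$. For $7^2_4$ the linking number is $0$ with a suitable orientation, which is consistent. Finally, I recognize the knotted component as a left-handed trefoil by isotoping it, for instance by reading off its crossing number and the sign of its crossings, and I identify the two-component link by matching against the table entry $7^2_4$ (seven crossings, unknot plus trefoil).

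The main obstacle is the middle step. The curves $v,w,z,d$ wind around several holes, so the handle slides produce increasingly tangled pictures, and keeping track of framings and crossing signs through four cancellations is error-prone. To control this, I will do the cancellations first for the curves that intersect the fewest others. I will also check the intermediate diagrams against invariants: $H_1$ should stay $\Z\oplus\Z/4$-type data matching Baykur's homology, and the final link's identification can be cross-checked by computer, as in \cref{sec:computer}.
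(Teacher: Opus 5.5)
Your outline follows the paper's strategy. You start from the Kirby diagram of the Lefschetz fibration $X_0$ (\cref{fig:baykur-kirby}), cancel four $1$--handle/$2$--handle pairs, end with one dotted circle and one $2$--handle along a null-homologous knot, and replace the dotted circle by a $0$--framed unknot. Your pairing ($\mu_2$ with $u$, $\mu_1$ with $d$, $\mu_4$ with $v$, $\mu_5$ with $w$) differs from the paper's, but it is admissible. Take as spanning disk of each dotted circle the half-plane facing the viewer in \cref{fig:baykur-kirby}; then each partner runs once over its $1$--handle at the moment you cancel it.

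The gap is that the proposal stops exactly where the proof begins. The content of the statement is the explicit computation: the slides with their framing changes, the tracing of the surviving attaching circle, and the isotopy to $7^2_4$ with a left-handed trefoil. None of this is carried out, and you name it yourself as the main obstacle. The paper supplies a definite sequence:
\begin{enumerate}
\item slide $h_w$ over $h_v$ and cancel $h_v$ against $\delta_5$;
\item slide $h_w'$ over $h_z$ and cancel $h_z$ against $\delta_4$;
\item slide $h_w''$ and $h_d$ over $h_u$ and cancel $h_u$ against $\delta_3$.
\end{enumerate}
This gives the $(-4)$--framed diagram of $DT^*K$ together with $h_d'$ of framing $-2$, which runs once over each remaining $1$--handle (\cref{fig:X0-DTK}). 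Two slides of $h_w'''$ over $h_d'$ then free it from both $1$--handles with framing $-8$, and $h_d'$ cancels against $\delta_1$. The resulting knot is then traced and isotoped explicitly.

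Your plan to finish by matching against the table and checking invariants cannot replace this computation. The link in $S^3$ is not determined by $Y$. Since $\mathrm{lk}(U,J)=0$, a Rolfsen twist along the $0$--framed $U$ keeps both coefficients $0$ and $-8$, but adds full twists to the strands of $J$ passing through $U$, which in general changes the link. So a different sequence of moves, such as yours (which ends with the image of $z$ rather than of $w$), has no reason to land on $7^2_4$, and no invariant of $Y$ can force it to.

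Moreover, the checks you list are incorrect.
\begin{itemize}
\item The linking matrix $\begin{pmatrix}0&\ell\\ \ell&-8\end{pmatrix}$ has determinant $-\ell^2$. So $b_1(Y)=1$ forces $\ell=0$ exactly, not ``$0$ or even''.
\item With $\ell=0$ one gets $H_1(Y)\cong\Z\oplus\Z/8$, not ``$\Z\oplus\Z/4$''. This agrees with the direct computation from the open book, as in \cref{lem:H1-boundaries}, using the rows of $AA^T$ for $S_d,S_u,S_v,S_w,S_z$.
\item Each vanishing cycle in \cref{fig:baykur-kirby} is a planar unknot whose page framing is its Seifert framing. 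Hence every $2$--handle has framing $-1$, and this is unchanged when the dotted circles become $0$--framed unknots. Your rule ``algebraic linking with the $\mu_i$ minus one'' is wrong; it would give $+1$ for $u$.
\end{itemize}
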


\begin{proof}
Note that \Cref{fig:baykur-kirby} is the Kirby diagram of Baykur's Stein filling
$X_0$ of $(Y, \xi)$ described by  the Lefschetz fibration via the factorization
$\phi=t_dt_ut_vt_wt_z$: five $1$--handles dual to the inner holes 
$\delta_1,\ldots,\delta_5$ in $\Sigma_0^6$ and five $2$--handles attached along the
vanishing cycles with framing $-1$ relative to the page.  Its boundary is
$Y$.

\begin{figure}[ht]
\centering
\begin{tikzpicture}[xscale=0.72,line cap=round, line join=round]
\draw[teal!80!black,line width=0.8pt] plot coordinates{(2.127,6.062) (2.200,6.058) (2.324,6.047) (2.418,6.031) (2.496,6.010) (2.551,5.986) (2.574,5.955) (2.574,5.920) (2.574,5.884) (2.574,5.849) (2.574,5.815) (2.675,5.799) (2.855,5.799) (3.034,5.799) (3.214,5.799) (3.393,5.799) (3.572,5.799) (3.744,5.799) (3.923,5.799) (4.103,5.799) (4.282,5.799) (4.462,5.799) (4.641,5.799) (4.820,5.799) (4.992,5.799) (5.171,5.799) (5.351,5.799) (5.507,5.802) (5.538,5.831) (5.538,5.867) (5.538,5.902) (5.538,5.937) (5.546,5.973) (5.593,5.999) (5.655,6.021) (5.741,6.041) (5.850,6.054) (5.990,6.063) (6.005,6.063)};
\draw[teal!80!black,line width=0.8pt] plot coordinates{(6.184,6.062) (6.287,6.057) (6.404,6.044) (6.490,6.026) (6.568,6.005) (6.614,5.979) (6.630,5.946) (6.630,5.910) (6.630,5.875) (6.630,5.839) (6.630,5.804) (6.630,5.768) (6.630,5.733) (6.630,5.697) (6.614,5.663) (6.575,5.638) (6.505,5.615) (6.419,5.597) (6.310,5.583) (6.170,5.575) (6.006,5.573) (5.827,5.573) (5.647,5.573) (5.476,5.573) (5.296,5.573) (5.117,5.573) (4.937,5.573) (4.758,5.573) (4.579,5.573) (4.407,5.573) (4.228,5.573) (4.048,5.573) (3.869,5.573) (3.689,5.573) (3.510,5.573) (3.338,5.573) (3.159,5.573) (2.980,5.573) (2.800,5.573) (2.621,5.573) (2.441,5.573) (2.270,5.573) (2.090,5.573) (2.005,5.573) (1.864,5.580) (1.739,5.591) (1.646,5.607) (1.568,5.626) (1.513,5.651) (1.482,5.681) (1.482,5.717) (1.482,5.752) (1.482,5.788) (1.482,5.823) (1.482,5.859) (1.482,5.894) (1.482,5.929) (1.490,5.965) (1.521,5.994) (1.583,6.016) (1.669,6.036) (1.771,6.050) (1.895,6.062) (1.949,6.063)};
\draw[orange!85!black,line width=0.8pt] plot coordinates{(4.157,4.949) (4.228,4.949) (4.306,4.949) (4.391,4.949) (4.469,4.949) (4.547,4.949) (4.633,4.949) (4.711,4.949) (4.789,4.949) (4.875,4.949) (4.953,4.949) (5.039,4.949) (5.117,4.949) (5.195,4.949) (5.281,4.949) (5.359,4.949) (5.437,4.949) (5.522,4.949) (5.600,4.949) (5.686,4.949) (5.764,4.949) (5.842,4.949) (5.928,4.949) (6.005,4.949)};
\draw[orange!85!black,line width=0.8pt] plot coordinates{(6.184,4.946) (6.224,4.944) (6.287,4.941) (6.341,4.934) (6.388,4.929) (6.435,4.921) (6.474,4.913) (6.513,4.905) (6.544,4.896) (6.575,4.884) (6.599,4.873) (6.614,4.860) (6.622,4.846) (6.630,4.831) (6.622,4.815) (6.607,4.802) (6.583,4.791) (6.552,4.780) (6.521,4.770) (6.490,4.760) (6.451,4.752) (6.404,4.746) (6.357,4.739) (6.302,4.733) (6.248,4.728) (6.178,4.725) (6.107,4.723) (6.029,4.723) (5.951,4.723) (5.866,4.723) (5.788,4.723) (5.710,4.723) (5.624,4.723) (5.546,4.723) (5.468,4.723) (5.382,4.723) (5.304,4.723) (5.218,4.723) (5.140,4.723) (5.062,4.723) (4.976,4.723) (4.898,4.723) (4.820,4.723) (4.735,4.723) (4.657,4.723) (4.571,4.723) (4.493,4.723) (4.415,4.723) (4.329,4.723) (4.251,4.723) (4.165,4.723) (4.087,4.723) (4.017,4.723) (4.009,4.725) (3.939,4.726) (3.877,4.730) (3.822,4.734) (3.767,4.741) (3.721,4.747) (3.682,4.755) (3.643,4.763) (3.604,4.773) (3.580,4.783) (3.549,4.794) (3.533,4.805) (3.518,4.820) (3.510,4.834) (3.518,4.849) (3.526,4.862) (3.549,4.875) (3.572,4.886) (3.596,4.897) (3.627,4.907) (3.666,4.915) (3.713,4.923) (3.752,4.929) (3.806,4.936) (3.861,4.941) (3.923,4.946) (3.977,4.947)};
\draw[violet,line width=0.8pt] plot coordinates{(4.156,3.829) (4.212,3.828) (4.352,3.813) (4.469,3.793) (4.555,3.765) (4.594,3.731) (4.602,3.688) (4.602,3.644) (4.602,3.601) (4.657,3.568) (4.867,3.567) (5.086,3.567) (5.304,3.567) (5.522,3.567) (5.741,3.567) (5.959,3.567) (6.178,3.567) (6.396,3.567) (6.614,3.567) (6.833,3.567) (7.051,3.567) (7.270,3.567) (7.488,3.567) (7.566,3.594) (7.566,3.638) (7.566,3.681) (7.566,3.725) (7.605,3.762) (7.683,3.789) (7.792,3.812) (7.933,3.826) (8.034,3.830)};
\draw[violet,line width=0.8pt] plot coordinates{(8.213,3.833) (8.338,3.833) (8.557,3.833) (8.775,3.833) (8.993,3.833) (9.212,3.833) (9.430,3.833) (9.649,3.833) (9.867,3.833) (10.061,3.833)};
\draw[violet,line width=0.8pt] plot coordinates{(10.240,3.829) (10.280,3.828) (10.436,3.815) (10.546,3.794) (10.631,3.767) (10.678,3.733) (10.670,3.693) (10.608,3.662) (10.507,3.638) (10.382,3.620) (10.218,3.609) (10.007,3.607) (9.789,3.607) (9.571,3.607) (9.352,3.607) (9.134,3.607) (8.915,3.607) (8.705,3.606) (8.658,3.570) (8.658,3.527) (8.658,3.483) (8.650,3.441) (8.603,3.407) (8.525,3.380) (8.408,3.359) (8.260,3.346) (8.065,3.341) (7.847,3.341) (7.628,3.341) (7.410,3.341) (7.192,3.341) (6.973,3.341) (6.755,3.341) (6.536,3.341) (6.318,3.341) (6.100,3.341) (5.881,3.341) (5.655,3.341) (5.437,3.341) (5.218,3.341) (5.000,3.341) (4.781,3.341) (4.563,3.341) (4.345,3.341) (4.126,3.341) (4.033,3.341) (3.853,3.349) (3.721,3.365) (3.611,3.388) (3.541,3.417) (3.510,3.454) (3.510,3.498) (3.510,3.541) (3.510,3.585) (3.510,3.628) (3.510,3.672) (3.510,3.715) (3.533,3.754) (3.604,3.783) (3.713,3.807) (3.845,3.823) (3.978,3.829)};
\draw[green!55!black,line width=0.8pt] plot coordinates{(2.128,2.797) (2.340,2.797) (2.590,2.797) (2.824,2.797) (3.073,2.797) (3.307,2.797) (3.549,2.797) (3.799,2.797) (3.977,2.797)};
\draw[green!55!black,line width=0.8pt] plot coordinates{(4.157,2.797) (4.274,2.797) (4.516,2.797) (4.758,2.797) (5.000,2.797) (5.242,2.797) (5.483,2.797) (5.725,2.797) (5.967,2.797) (6.005,2.797)};
\draw[green!55!black,line width=0.8pt] plot coordinates{(6.185,2.797) (6.209,2.797) (6.451,2.797) (6.692,2.797) (6.934,2.797) (7.176,2.797) (7.418,2.797) (7.660,2.797) (7.901,2.797) (8.033,2.797)};
\draw[green!55!black,line width=0.8pt] plot coordinates{(8.213,2.797) (8.385,2.797) (8.627,2.797) (8.869,2.797) (9.110,2.797) (9.352,2.797) (9.594,2.797) (9.836,2.797) (10.061,2.797)};
\draw[green!55!black,line width=0.8pt] plot coordinates{(10.240,2.793) (10.327,2.790) (10.507,2.766) (10.631,2.732) (10.686,2.682) (10.686,2.623) (10.686,2.563) (10.686,2.503) (10.686,2.444) (10.663,2.387) (10.561,2.347) (10.405,2.320) (10.171,2.305) (9.929,2.315) (9.750,2.339) (9.633,2.376) (9.594,2.428) (9.594,2.487) (9.594,2.547) (9.415,2.571) (9.118,2.571) (8.814,2.571) (8.518,2.571) (8.221,2.571) (8.213,2.571)};
\draw[green!55!black,line width=0.8pt] plot coordinates{(8.033,2.571) (7.917,2.571) (7.621,2.571) (7.316,2.571) (7.020,2.571) (6.724,2.571) (6.630,2.531) (6.630,2.471) (6.630,2.412) (6.560,2.363) (6.427,2.331) (6.232,2.310) (5.975,2.308) (5.764,2.326) (5.624,2.358) (5.546,2.402) (5.538,2.462) (5.538,2.521) (5.491,2.570) (5.195,2.571) (4.898,2.571) (4.594,2.571) (4.298,2.571) (4.157,2.571)};
\draw[green!55!black,line width=0.8pt] plot coordinates{(3.977,2.571) (3.697,2.571) (3.401,2.571) (3.097,2.571) (2.800,2.571) (2.574,2.555) (2.574,2.497) (2.574,2.437) (2.543,2.383) (2.441,2.344) (2.278,2.318) (2.036,2.305) (1.973,2.307) (1.755,2.321) (1.599,2.349) (1.498,2.391) (1.482,2.447) (1.482,2.507) (1.482,2.566) (1.482,2.626) (1.482,2.686) (1.537,2.734) (1.669,2.768) (1.856,2.790) (1.949,2.793)};
\draw[blue!70!black,line width=0.8pt] plot coordinates{(3.977,1.331) (3.713,1.331) (3.448,1.331) (3.182,1.331) (2.917,1.331) (2.652,1.331) (2.574,1.292) (2.574,1.239) (2.574,1.186) (2.543,1.139) (2.441,1.105) (2.301,1.081) (2.106,1.067) (1.872,1.070) (1.700,1.089) (1.568,1.117) (1.498,1.155) (1.482,1.207) (1.482,1.260) (1.482,1.313) (1.482,1.367) (1.482,1.420) (1.498,1.470) (1.576,1.507) (1.708,1.534) (1.888,1.552) (1.949,1.553)};
\draw[blue!70!black,line width=0.8pt] plot coordinates{(2.128,1.557) (2.129,1.557) (2.395,1.557) (2.660,1.557) (2.933,1.557) (3.198,1.557) (3.463,1.557) (3.728,1.557) (3.977,1.557)};
\draw[blue!70!black,line width=0.8pt] plot coordinates{(4.157,1.557) (4.267,1.557) (4.532,1.557) (4.797,1.557) (5.062,1.557) (5.281,1.547) (5.444,1.526) (5.554,1.496) (5.608,1.452) (5.616,1.400) (5.616,1.347) (5.616,1.294) (5.616,1.241) (5.616,1.188) (5.616,1.133) (5.616,1.080) (5.632,1.031) (5.866,1.025) (6.139,1.025) (6.404,1.025) (6.669,1.025) (6.942,1.025) (7.207,1.025) (7.472,1.025) (7.566,1.059) (7.566,1.112) (7.566,1.165) (7.597,1.213) (7.691,1.249) (7.823,1.273) (8.018,1.289) (8.033,1.289)};
\draw[blue!70!black,line width=0.8pt] plot coordinates{(8.212,1.287) (8.252,1.286) (8.432,1.270) (8.564,1.241) (8.642,1.204) (8.658,1.154) (8.658,1.101) (8.658,1.047) (8.658,0.994) (8.658,0.941) (8.642,0.891) (8.572,0.852) (8.447,0.823) (8.276,0.804) (8.042,0.799) (7.769,0.799) (7.504,0.799) (7.238,0.799) (6.973,0.799) (6.700,0.799) (6.435,0.799) (6.170,0.799) (5.905,0.799) (5.632,0.799) (5.366,0.799) (5.101,0.799) (5.054,0.799) (4.844,0.810) (4.688,0.833) (4.579,0.864) (4.524,0.907) (4.524,0.959) (4.524,1.012) (4.524,1.065) (4.524,1.118) (4.524,1.172) (4.524,1.226) (4.524,1.280) (4.493,1.326) (4.251,1.331) (4.157,1.331)};
\node[teal!80!black,left] at (1.432,5.952) {$d$};
\node[teal!80!black,right] at (6.680,5.952) {$-1$};
\node[orange!85!black,left] at (3.460,4.836) {$u$};
\node[orange!85!black,right] at (6.680,4.836) {$-1$};
\node[violet,left] at (3.460,3.720) {$v$};
\node[violet,right] at (10.728,3.720) {$-1$};
\node[green!55!black,left] at (1.432,2.418) {$w$};
\node[green!55!black,right] at (10.736,2.418) {$-1$};
\node[blue!70!black,left] at (1.432,1.178) {$z$};
\node[blue!70!black,right] at (8.708,1.178) {$-1$};
\draw[black,line width=0.8pt] (2.028,0.434) -- (2.028,0.968);
\draw[black,line width=0.8pt] (2.028,1.168) -- (2.028,2.205);
\draw[black,line width=0.8pt] (2.028,2.405) -- (2.028,5.473);
\draw[black,line width=0.8pt] (2.028,5.673) -- (2.028,6.944);
\fill (2.028,6.594) circle (2.2pt);
\node[above] at (2.028,7.006) {$\delta_{1}$};
\draw[black,line width=0.8pt] (4.056,0.434) -- (4.056,3.241);
\draw[black,line width=0.8pt] (4.056,3.441) -- (4.056,4.623);
\draw[black,line width=0.8pt] (4.056,4.823) -- (4.056,5.473);
\draw[black,line width=0.8pt] (4.056,5.673) -- (4.056,5.699);
\draw[black,line width=0.8pt] (4.056,5.899) -- (4.056,6.944);
\fill (4.056,6.594) circle (2.2pt);
\node[above] at (4.056,7.006) {$\delta_{2}$};
\draw[black,line width=0.8pt] (6.084,0.434) -- (6.084,0.699);
\draw[black,line width=0.8pt] (6.084,0.899) -- (6.084,0.925);
\draw[black,line width=0.8pt] (6.084,1.125) -- (6.084,2.209);
\draw[black,line width=0.8pt] (6.084,2.409) -- (6.084,3.241);
\draw[black,line width=0.8pt] (6.084,3.441) -- (6.084,3.467);
\draw[black,line width=0.8pt] (6.084,3.667) -- (6.084,4.623);
\draw[black,line width=0.8pt] (6.084,4.823) -- (6.084,5.474);
\draw[black,line width=0.8pt] (6.084,5.674) -- (6.084,6.944);
\fill (6.084,6.594) circle (2.2pt);
\node[above] at (6.084,7.006) {$\delta_{3}$};
\draw[black,line width=0.8pt] (8.112,0.434) -- (8.112,0.701);
\draw[black,line width=0.8pt] (8.112,0.901) -- (8.112,3.243);
\draw[black,line width=0.8pt] (8.112,3.443) -- (8.112,6.944);
\fill (8.112,6.594) circle (2.2pt);
\node[above] at (8.112,7.006) {$\delta_{4}$};
\draw[black,line width=0.8pt] (10.140,0.434) -- (10.140,2.206);
\draw[black,line width=0.8pt] (10.140,2.406) -- (10.140,3.508);
\draw[black,line width=0.8pt] (10.140,3.708) -- (10.140,6.944);
\fill (10.140,6.594) circle (2.2pt);
\node[above] at (10.140,7.006) {$\delta_{5}$};
\end{tikzpicture}
\caption{The Kirby diagram of Baykur's Stein filling
$X_0$, with $\partial X_0=Y$.  The five vertical
arcs are the dotted circles of the $1$--handles dual to
$\delta_1,\ldots,\delta_5$ (drawn through infinity), and $u,v,w,z,d$ are
the vanishing cycles.}
\label{fig:baykur-kirby}
\end{figure}

We reduce this diagram by Kirby calculus, writing $h'$ for the $2$--handle
obtained from $h$ by a slide.  The curves $v,w$ are the only ones enclosing
$\delta_5$: slide $h_w$ over $h_v$ to obtain $h_w'$, which no longer runs
over the $1$--handle dual to $\delta_5$, and cancel $h_v$ against that
$1$--handle.  Now $h_w'$ and $h_z$ are the only $2$--handles running over
the $1$--handle dual to $\delta_4$: slide $h_w'$ over $h_z$ to obtain
$h_w''$ and cancel $h_z$ against that $1$--handle.  Finally $h_w''$, $h_u$
and $h_d$ run over the $1$--handle dual to $\delta_3$: slide $h_w''$ and
$h_d$ over $h_u$ to obtain $h_w'''$ and $h_d'$, and cancel $h_u$ against
that $1$--handle. What remains is
$\natural^2(S^1\times D^3)$, with $1$--handles dual to $\delta_1,\delta_2$,
together with two $2$--handles: $h_w'''$ of framing
$-4$, and $h_d'$ of framing $-2$, which runs once over each
surviving $1$--handle, see \cref{fig:X0-DTK}.

\begin{figure}[ht]
\centering
\begin{tikzpicture}[scale=0.8, line cap=round, line join=round]
\def\cc{2.3}
\draw[green!55!black,line width=0.9pt]
  (-4.2,-0.6) -- (4.2,-0.6) arc[start angle=-90,end angle=90,radius=0.6]
  -- (-4.2,0.6) arc[start angle=90,end angle=270,radius=0.6];
\draw[black,line width=0.9pt] (\cc,0) ellipse[x radius=0.75,y radius=1.5];
\draw[black,line width=0.9pt] (-\cc,0) ellipse[x radius=0.75,y radius=1.5];
\fill ({\cc+0.75*cos(-60)},{1.5*sin(-60)}) circle (2.6pt);
\fill ({-\cc+0.75*cos(-60)},{1.5*sin(-60)}) circle (2.6pt);
\draw[red!75!black,line width=0.9pt]
  (-4.4,1.05) -- (-1.3,1.05)
  .. controls (-0.9,1.05) and (-0.76,0.78) .. (-0.693,0.6)
  .. controls (-0.6,0.33) and (-0.35,0.1) .. (0,0.1)
  .. controls (0.35,0.1) and (0.6,0.33) .. (0.693,0.6)
  .. controls (0.76,0.78) and (0.9,1.05) .. (1.3,1.05) -- (4.4,1.05);
\draw[red!75!black,line width=0.9pt]
  (4.4,1.05) arc[start angle=-90,end angle=90,radius=0.475]
  -- (-4.4,2.0) arc[start angle=90,end angle=270,radius=0.475];
\foreach \x/\y in {-2.987/-0.6, -1.613/0.6, 1.613/-0.6, 2.987/0.6}{
  \draw[white,line width=4.5pt,line cap=butt] (\x-0.28,\y) -- (\x+0.28,\y);
  \draw[green!55!black,line width=0.9pt] (\x-0.3,\y) -- (\x+0.3,\y);}
\draw[white,line width=4.5pt,line cap=butt] (-0.693-0.2,0.6) -- (-0.693+0.2,0.6);
\draw[green!55!black,line width=0.9pt] (-0.693-0.22,0.6) -- (-0.693+0.22,0.6);
\foreach \cx/\t in {-2.3/156.42, -2.3/-23.58, 2.3/156.42, 2.3/-23.58, -2.3/135.57, 2.3/135.57}{
  \draw[white,line width=4.5pt,line cap=butt,domain=\t-11:\t+11,samples=12]
    plot ({\cx+0.75*cos(\x)},{1.5*sin(\x)});
  \draw[black,line width=0.9pt,domain=\t-12:\t+12,samples=12]
    plot ({\cx+0.75*cos(\x)},{1.5*sin(\x)});}
\foreach \x in {-1.764, 2.836}{
  \draw[white,line width=4.5pt,line cap=butt] (\x-0.28,1.05) -- (\x+0.28,1.05);
  \draw[red!75!black,line width=0.9pt] (\x-0.3,1.05) -- (\x+0.3,1.05);}
\begin{scope}
\clip (0.693,0.6) circle (0.24);
\draw[white,line width=4.5pt]
  (0,0.1) .. controls (0.35,0.1) and (0.6,0.33) .. (0.693,0.6)
  .. controls (0.76,0.78) and (0.9,1.05) .. (1.3,1.05);
\end{scope}
\begin{scope}
\clip (0.693,0.6) circle (0.26);
\draw[red!75!black,line width=0.9pt]
  (0,0.1) .. controls (0.35,0.1) and (0.6,0.33) .. (0.693,0.6)
  .. controls (0.76,0.78) and (0.9,1.05) .. (1.3,1.05);
\end{scope}
\node[green!55!black] at (4.3,-1.05) {$-4$};
\node[red!75!black] at (0,2.4) {$-2$};
\end{tikzpicture}
\caption{Baykur's Stein filling $X_0$ after the first handle moves in the proof
of \cref{prop:baykur-surgery}: the diagram of $DT^*K$ of \cref{fig:klein-kirby}
together with the $2$--handle $h_d'$ of framing $-2$, which runs once over each
$1$--handle, in the same direction, and links the $2$--handle of $DT^*K$ once.
Compare \cref{fig:X0d1}.}
\label{fig:X0-DTK}
\end{figure}

The $2$--handle $h_w'''$ runs twice, in the same direction, over the
$1$--handle dual to $\delta_1$, and $h_d'$ runs over it once.  Slide
$h_w'''$ twice over $h_d'$ to obtain $h_w''''$, which no longer runs over the
$1$--handles dual to $\delta_1$ and $\delta_2$. Its framing is $-8$.
  Now cancel $h_d'$ against the $1$--handle dual to
$\delta_1$.  The result is $S^1\times D^3$ with a single $2$--handle attached
along a null-homologous knot $J'$ of framing $-8$. Hence $Y$ is
$(0,-8)$--surgery on the two-component link formed by the dotted circle and
$J'$.  Tracing the attaching circle through these moves and simplifying
the resulting diagram by a framed isotopy gives the Kirby diagram on
the left of \cref{fig:final-kirby}. The sequence of handle moves, drawn by hand
and showing the slides, the framings and the final isotopy, is included as an
ancillary file with the arXiv version of this paper.  Replacing the
dotted circle by a $0$--framed unknot gives the surgery diagram of $Y$ depicted on the right of \cref{fig:final-kirby}. 
The link is $7^2_4$ in Rolfsen's table \cite{Rolfsen}, i.e., L7a3 in the
Thistlethwaite notation used by LinkInfo \cite{LinkInfo}. The trefoil
component $J$ on the right of \cref{fig:final-kirby} is left-handed. 
\end{proof}

\begin{figure}[ht]
\centering

\caption{Left: A Kirby diagram of Baykur's Stein filling $X_0$, obtained from
\cref{fig:baykur-kirby} by the handle slides and cancellations in the proof
of \cref{prop:baykur-surgery}.  Right: The corresponding surgery diagram of
Baykur's manifold $Y$: the link $L=7^2_4$, with coefficient $0$ on the
unknotted component $U$ and $-8$ on the trefoil component $J$.}
\label{fig:final-kirby}
\end{figure}

\begin{proposition}\label[proposition]{prop:baykur-seifert}
Baykur's manifold $Y$ is orientation-preservingly diffeomorphic to the
$3$--manifold obtained from $ST^*K$ by Dehn surgery along a regular fiber of
$ST^*K\to K$, with coefficient $-2$ relative to the bundle framing.  In
particular, $Y$ is Seifert fibered over the Klein bottle with one exceptional
fiber of multiplicity $2$. Up to orientation, it is the Seifert fibered space
with one exceptional fiber of type $(2,1)$ and Seifert Euler parameter $b=0$,
in the convention of \cite[Proposition~4.3]{Wilkes}.
\end{proposition}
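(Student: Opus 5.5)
The plan follows the outline in the introduction: pass through the Stein filling $W_0$ of \cref{sec:newfillings}, which has $\pi_1(W_0)\cong\pi_1(K)$, rather than Baykur's $X_0$.

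\emph{Step 1: realize $W_0$ as $DT^*K$ plus a $2$--handle.} By construction, $W_0$ is obtained by attaching a $2$--handle along $d$ to a Lefschetz fibration $X(\Sigma_0^6;f(u),f(v),f(w),f(z))$. Here $f$ is a product of the half twists commuting with $B$, and $f$ identifies this Lefschetz fibration with $X_B$. By \cref{thm:main}(2), proved in \cref{sec:klein} by Kirby calculus, $X_B\cong DT^*K$ and $Y_B\cong ST^*K$ orientation-preservingly. Hence $W_0$ is $DT^*K$ with one $2$--handle attached along the knot $f^{-1}(d)$, pushed into $ST^*K$, with page framing $-1$.

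\emph{Step 2: identify the attaching curve and its framing.} Using the same handle slides as in the proof of \cref{prop:baykur-surgery}, I would reduce to the diagram of $DT^*K$ from \cref{fig:klein-kirby} together with the new $2$--handle. I expect this handle to be isotopic, in $ST^*K$, to a circle fiber of $ST^*K\to K$. The evidence is that it is trivial in $\pi_1(DT^*K)$, exactly like the fiber, since it bounds a disk in $D^2$--fiber direction. In the diagram, the circle fiber of $DT^*K$ appears as a meridian of the $2$--handle of $DT^*K$ (linking it once, avoiding the $1$--handles). So the plan is to show, by further slides over the handle of $DT^*K$ and isotopy, that the attaching circle becomes such a meridian. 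I would then compute its framing by comparing the diagram framing with the bundle framing of the fiber, which is induced from the product structure of the disk bundle over a small disk in $K$. The target is framing $-2$ relative to the bundle framing, which is the content of \cref{prop:W0}.

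This identification of the knot as a fiber, and above all the framing bookkeeping relative to the bundle framing, is the main obstacle. The diagram framing, the page framing, and the bundle framing must all be tracked through the slides. A useful first check is homological: $H_1(W_0)\cong\Z\oplus\Z/2$ is the correct answer for $-2$ surgery, since odd coefficients would give $\Z$. I would try that check before the full isotopy.

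\emph{Step 3: conclude.} Since $\partial W_0=Y$, $Y$ is obtained from $ST^*K$ by integral surgery on a regular fiber with coefficient $-2$ relative to the fiber framing. The Seifert fibration of $ST^*K$ over $K$ then extends over the surgery solid torus. The meridian of the new solid torus is $\mu-2\lambda$, where $\lambda$ is the fiber, so the fiber meets it twice. This gives one exceptional fiber of multiplicity $2$ over $K$, of type $(2,1)$ after normalizing. Finally, $ST^*K$ has Seifert invariant $b=0$ (Euler number $\chi(K)=0$ for the unit cotangent bundle). I would then absorb the $-2$ into the normalized invariants $(2,1)$ with $b$ adjusted, matching the convention of \cite[Proposition~4.3]{Wilkes} up to orientation. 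As independent confirmation, the computer check in \cref{subsec:computer-Y} can be cited.
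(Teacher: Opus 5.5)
Your route is the paper's: realize $W_0$ as $DT^*K$ plus a $2$--handle along a circle fiber with framing $-2$, then extend the Seifert fibration over the surgery torus. But Step 2 is the entire first sentence of the statement, and you leave it as an expectation supported by evidence that cannot prove it.

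\begin{itemize}
\item Being null-homotopic in $DT^*K$ is far from being isotopic to a fiber of $ST^*K$.
\item Your homological check cannot see the framing. The attaching circle is null-homologous in $DT^*K$, so $H_1(W_0)\cong H_1(K)\cong\Z\oplus\Z/2$ for every framing. The dichotomy ``odd gives $\Z$'' belongs to the torsion order $m$ of \cref{thm:newfillings}, not to a framing.
\item The relevant group is $H_1(Y)\cong\Z\oplus\Z/4\lvert n\rvert$ for coefficient $n$. This fits $\Z\oplus\Z/8$ for $n=-2$, but it is blind to the sign of $n$.
\end{itemize}

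You should either cite \cref{prop:W0} outright, which is all the paper does for this sentence, or supply the missing idea, which is as follows.
\begin{itemize}
\item For $f=\sigma_1$, \eqref{eq:artininv} gives $f^{-1}(d)=\sigma_1^{-1}(x_3x_1)=x_3x_2=u$. So $W_0\cong X(\Sigma;u,u,v,w,z)$; equivalently, in $W_0=X(\Sigma;d,d,v',w',z')$ itself, $h_d^*$ is a page-parallel copy of $h_d$.
\item Two such copies have linking number equal to the page framing, $0$. The $(-1)$--framing push-off differs from the page push-off by one full twist. Hence subtracting one copy from the other along a short band produces a meridian $\mu$ with framing $-1-1-2\cdot 0=-2$.
\item In the later slide of the $w$--handle over $h_u$ (over $h_d$ in the paper), the push-off necessarily runs through $\mu$, because $\mu$ lies outside the attaching region. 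After moving $\mu$ across the belt sphere and cancelling, $\mu$ is a meridian of the $(-4)$--framed handle of $DT^*K$, i.e.\ a fiber.
\item Its diagram framing $0$ is the cocore framing, which is the bundle framing.
\end{itemize}

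Step 3 contains a slip that breaks the multiplicity count. For coefficient $-2$ relative to the fiber framing $\lambda$, the new meridian is $\lambda-2\mu$ (the paper's $-2\mu_F+\lambda_F$), not $\mu-2\lambda$. The fiber meets $\mu-2\lambda$ algebraically once, so your curve would give a regular core and no exceptional fiber at all. With the correct curve the algebraic intersection with the fiber is $\pm2$, and the unnormalized invariant of the new fiber is $(2,\pm1)$.

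Nothing is ``absorbed'' from the $-2$. Normalizing gives $(b;(2,1))$ with $b\in\{0,-1\}$, depending on orientation conventions. Reversing the orientation sends $(b;(2,1))$ to $(-b-1;(2,1))$, so either case gives $b=0$ up to orientation. This is exactly why the statement says ``up to orientation''.
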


The proof is given in \cref{subsec:W0}, after the new filling $W_0$ has been
constructed: Kirby calculus on the Lefschetz fibration $W_0$ exhibits $Y$ as
surgery along a regular fiber of $ST^*K\to K$ (\cref{prop:W0}).

Note that \cref{prop:baykur-surgery,prop:baykur-seifert} together prove \cref{thm:main}(1). Both propositions are also confirmed by computer, independently of the handle
calculus, see \cref{subsec:computer-Y}.

\section{The filling \texorpdfstring{$X_B$}{X\_B}}\label{sec:klein}

Baykur's mapping class $\phi=t_dB$ is obtained from
\[
B=t_ut_vt_wt_z\in\Mod(\Sigma_0^6,\partial\Sigma_0^6)
\]
by adding a single Dehn twist.  We write
\[
Y_B=\ob(\Sigma_0^6,B),\qquad X_B=X(\Sigma_0^6;u,v,w,z),
\]
where $X(\Sigma_0^6;u,v,w,z)$ is the Lefschetz fibration with vanishing cycles
$u,v,w,z$, as in the introduction.

\begin{proposition}\label[proposition]{thm:klein-main}
$X_B$ is diffeomorphic to $DT^*K$.  Consequently $Y_B$ is
orientation-preservingly diffeomorphic to $ST^*K$.  In particular $Y_B$ is
the circle bundle over the Klein bottle with Euler number zero.
\end{proposition}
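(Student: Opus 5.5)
The plan is to reuse the handle moves already carried out in the proof of \cref{prop:baykur-surgery}, simply forgetting the handle $h_d$. In \cref{fig:baykur-kirby} with $h_d$ removed we have exactly the Kirby diagram of $X_B$: five $1$--handles dual to $\delta_1,\ldots,\delta_5$ and the four $2$--handles $h_u,h_v,h_w,h_z$. The slides in that proof never slid anything over $h_d$; they only used $h_v,h_z,h_u$ as the handles being slid over. So the same sequence applies verbatim:
\begin{itemize}
\item slide $h_w$ over $h_v$ and cancel $h_v$ with $\delta_5$;
\item slide over $h_z$ and cancel $h_z$ with $\delta_4$;
\item slide over $h_u$ and cancel $h_u$ with $\delta_3$.
\end{itemize}
The result is $\natural^2(S^1\times D^3)$ (dotted circles for $\delta_1,\delta_2$) with the single $2$--handle $h_w'''$ of framing $-4$. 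This is the diagram in \cref{fig:X0-DTK} with the red handle deleted. I would track $h_w'''$ carefully through the slides to record its word in the free group $\langle x_1,x_2\rangle$ and its framing. The figure indicates that it runs through the two $1$--handles in a pattern reading $x_1 x_2 x_1 x_2^{-1}$ (or an equivalent conjugate), i.e.\ the Klein bottle relator.

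Next I will identify this two-$1$--handle, one-$2$--handle diagram with the standard Kirby diagram of $DT^*K$. For a nonorientable surface $F$, a handle diagram of $DT^*F$ is obtained from a handle decomposition of $F$: the $1$--handles of $F$ become dotted circles, and the $2$--handle of $F$ becomes a $2$--handle attached along the boundary word. For the Klein bottle the framing is determined by the Euler characteristic and twisting conventions (as in Gompf--Stipsicz, where for disk bundles over nonorientable surfaces the framing is read off relative to the pattern). I would check that the standard picture is two dotted circles with a $2$--handle of the shape in \cref{fig:X0-DTK} and framing $-4$ in that blackboard convention. This is presumably what \cref{fig:klein-kirby} depicts. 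Matching the two then requires only an isotopy and possibly a $1$--handle slide to put the word in standard form $x_1x_2x_1x_2^{-1}$.

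The boundary statement follows directly. $Y_B=\partial X_B\cong\partial DT^*K=ST^*K$, and orientation is preserved because the diffeomorphism comes from Kirby moves on oriented handlebodies. The Euler number zero claim holds because $T^*K\cong TK$ and the unit tangent bundle of the Klein bottle has Euler number $\chi(K)=0$ (twisted). Alternatively, $H_1(Y_B)\cong\Z\oplus\Z/2\oplus\Z/2$ distinguishes it, with $b_1=1$ computed from the diagram.

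The main obstacle is the framing and crossing bookkeeping in identifying the resulting $-4$--framed curve with the canonical $DT^*K$ diagram. The fundamental group $\pi_1(K)$ alone does not determine the disk bundle, since the Euler number must be $0$. I would verify this by computing the intersection form of the double cover, or more simply by checking that $H_1$ of the boundary is $\Z\oplus\Z/2\oplus\Z/2$ rather than what a nonzero twisted Euler number would give. To pin it down, I would compare the framing with that of the standard model, where the twisting of the attaching curve through the $1$--handles produces a blackboard framing correction equal to $-4$ for the zero-Euler-number bundle.
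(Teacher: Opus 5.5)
Your route is the paper's. You perform the three slide-and-cancel steps over $h_v$, $h_z$, $h_u$, against the $1$--handles dual to $\delta_5,\delta_4,\delta_3$, with framing $-1\to-2\to-3\to-4$. You then recognize the two-dotted-circle diagram as the standard picture of $DT^*K$.

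\textbf{The gap: the Euler number.} The recognition step, which is what identifies the bundle, is left as ``presumably'', and the check you propose for it cannot work. Take two dotted circles and a $2$--handle of framing $n$ running twice over each. The boundary has $H_1\cong\langle g_1,g_2,g_3\mid 2g_3,\ 2g_1+2g_2+ng_3\rangle$. This is $\Z\oplus\Z/2\oplus\Z/2$ for every even $n$ and $\Z\oplus\Z/4$ for every odd $n$. Equivalently, the orientable circle bundle over $K$ with twisted Euler number $e$ has $\pi_1=\langle a_1,a_2,h\mid a_iha_i^{-1}=h^{-1},\ a_1^2a_2^2=h^{e}\rangle$, and its abelianization sees only the parity of $e$. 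So $H_1(Y_B)\cong\Z\oplus\Z/2\oplus\Z/2$ does not distinguish $e=0$ from $e=\pm2,\pm4,\ldots$. The intersection form $(2e)$ of the orientation double cover would distinguish them, but you would have to draw that cover.

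The paper closes this step by explicit comparison with a model. After an isotopy, the diagram is the $k=2$ case of Figure~21(b) of Oba--Ozbagci. There, $k$ crosscap $1$--handles with a $2$--handle of coefficient $-k-2$ give the disk bundle over the nonorientable genus-$k$ surface with Euler number $k-2$, in the Gompf--Stipsicz sense. So $-4$ at $k=2$ is Euler number $0$, i.e.\ $DT^*K$.

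Your reading of the word is also off. $h_w'''$ runs twice in the same direction through each $1$--handle, so it already has the crosscap form $x_1^2x_2^{2}$ (up to orienting the $1$--handles), matching that model directly. The word $x_1x_2x_1x_2^{-1}$ is not a conjugate of this; exponent sums differ. It is the image under a $1$--handle slide, after which the framing comparison would have to be redone.

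\textbf{Orientation.} ``Kirby moves preserve orientation'' gives an orientation-preserving diffeomorphism from $X_B$ to the oriented manifold of the final diagram. It does not show that this orientation agrees with the canonical one on $DT^*K$. The paper handles this by noting that $DT^*K$ has an orientation-reversing self-diffeomorphism. Since $\chi(K)=0$, $K$ carries a nowhere-vanishing vector field, and reflecting each fiber across the line it determines is the identity on $K$ and a reflection on fibers. Hence the diffeomorphism $X_B\to DT^*K$ can be chosen orientation preserving, and its restriction gives $Y_B\cong ST^*K$ orientation-preservingly. Your observation that $ST^*K$ has twisted Euler number $\pm\chi(K)=0$ is fine.
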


\subsection{Kirby calculus}\label{subsec:kirby}

We prove \cref{thm:klein-main} by Kirby calculus. The curves $u,v,w,z$ are shown in \cref{fig:baykur-curves}.  The
Lefschetz fibration $X_B$ is
$\Sigma_0^6\times D^2=\natural^5(S^1\times D^3)$, with $1$--handles dual to
the holes $\delta_1,\ldots,\delta_5$, together with four $2$--handles
$h_u,h_v,h_w,h_z$ attached along the page curves with framing $-1$ relative
to the page (see \cref{fig:baykur-kirby}, with $d$ omitted). 

\begin{figure}[htbp]
\centering
\begin{tikzpicture}[scale=0.8, line cap=round, line join=round]
\def\cc{2.3}
\draw[green!55!black,line width=0.9pt]
  (-4.2,-0.6) -- (4.2,-0.6) arc[start angle=-90,end angle=90,radius=0.6]
  -- (-4.2,0.6) arc[start angle=90,end angle=270,radius=0.6];
\draw[black,line width=0.9pt] (\cc,0) ellipse[x radius=0.75,y radius=1.5];
\draw[black,line width=0.9pt] (-\cc,0) ellipse[x radius=0.75,y radius=1.5];
\fill ({\cc+0.75*cos(60)},{1.5*sin(60)}) circle (2.6pt);
\fill ({-\cc+0.75*cos(60)},{1.5*sin(60)}) circle (2.6pt);
\foreach \x/\y in {-2.987/-0.6, -1.613/0.6, 1.613/-0.6, 2.987/0.6}{
  \draw[white,line width=4.5pt,line cap=butt] (\x-0.28,\y) -- (\x+0.28,\y);
  \draw[green!55!black,line width=0.9pt] (\x-0.3,\y) -- (\x+0.3,\y);}
\foreach \cx/\t in {-2.3/156.42, -2.3/-23.58, 2.3/156.42, 2.3/-23.58}{
  \draw[white,line width=4.5pt,line cap=butt,domain=\t-11:\t+11,samples=12]
    plot ({\cx+0.75*cos(\x)},{1.5*sin(\x)});
  \draw[black,line width=0.9pt,domain=\t-12:\t+12,samples=12]
    plot ({\cx+0.75*cos(\x)},{1.5*sin(\x)});}
\node[green!55!black] at (4.3,-1.05) {$-4$};
\end{tikzpicture}
\caption{The diagram obtained from the Kirby diagram of $X_B$
(\cref{fig:baykur-kirby} with $d$ omitted), which is also the Kirby diagram of the
disk bundle over the Klein bottle, as in
\cite[Figure~21(b), $k=2$]{ObaOzbagci}. Compare the exercise on disk
bundles over nonorientable surfaces in \cite[\S4.6]{GS}.}
\label{fig:klein-kirby}
\end{figure}

\begin{proof}[Proof of \cref{thm:klein-main}]
The curves $v$ and $w$ are the only ones enclosing $\delta_5$.  Slide $h_w$
over $h_v$ so that the resulting $2$--handle $h_w'$ no longer runs over the
$1$--handle dual to $\delta_5$, and cancel $h_v$ against that $1$--handle.
Now $h_w'$ and $h_z$ are the only $2$--handles running over the $1$--handle
dual to $\delta_4$. Slide $h_w'$ over $h_z$ to obtain $h_w''$ and cancel
$h_z$ against that $1$--handle.  Finally $h_w''$ and $h_u$ are the only
$2$--handles running over the $1$--handle dual to $\delta_3$. Slide $h_w''$
over $h_u$ to obtain $h_w'''$ and cancel $h_u$ against that $1$--handle.
What remains is $\natural^2(S^1\times D^3)$, with $1$--handles dual to
$\delta_1,\delta_2$, and the single $2$--handle $h_w'''$.

The framing of $h_w'''$ is $-4$, following the three slides
($-1\to-2\to-3\to-4$).  An isotopy carries the resulting diagram to
\cref{fig:klein-kirby}. The sequence of handle moves, drawn by hand, is
included as an ancillary file with the arXiv version of this paper.  \cref{fig:klein-kirby} is the $k=2$ diagram of
\cite[\S3.4, Figure~21(b)]{ObaOzbagci}: in that convention, the diagram with
$k$ crosscap $1$--handles and $2$--handle coefficient $-k-2$ is the disk
bundle over the nonorientable surface of genus $k$ with Euler number $k-2$,
in the sense of Gompf--Stipsicz \cite[\S4.6]{GS}.  Thus the coefficient $-4$
at $k=2$ gives Euler number zero, and $X_B\cong DT^*K$.

The manifold $DT^*K$ admits an orientation-reversing self-diffeomorphism:
since $\chi(K)=0$,
$K$ carries a nowhere-vanishing vector field, and reflecting each fiber of
$T^*K$ across the line it determines, via a Riemannian metric, is the identity on $K$ and a reflection
on the fibers.  Hence the
diffeomorphism $X_B\to DT^*K$ may be taken orientation preserving.  Its
restriction to the boundary is then an orientation-preserving diffeomorphism
$Y_B\to ST^*K$.  Finally $ST^*K$ is the circle bundle over $K$ with
Euler number zero, in the sense of \cite[\S4.6]{GS}.  Since $K$ is
nonorientable, this is not a principal circle bundle: the fiber orientation
of the orientable total space reverses along orientation-reversing loops of
$K$, and the Euler number is defined with twisted coefficients.
\end{proof}

\section{Stein fillings with even torsion}\label{sec:newfillings}

In this section we exploit two further symmetries of the monodromy $B$, half twists interchanging two of the holes, to produce Stein fillings of $(Y,\xi)$ that are not in Baykur's list.  Throughout, $\Sigma=\Sigma_0^6$, $\Gamma=\Mod(\Sigma,\partial\Sigma)$, products are written in functional order, and $ht_ch^{-1}=t_{h(c)}$ for $h\in\Gamma$.  Recall that for $m\ge0$, $\mathbb Z/m\rtimes_{-1}\mathbb Z$ denotes the semidirect product in which the generator of $\mathbb Z$ acts on $\mathbb Z/m$ by inversion. For $m=0$ this is the Klein bottle group $\pi_1(K)=\langle r,b\mid brb^{-1}=r^{-1}\rangle$, and for $m=1$ it is $\mathbb Z$.  Baykur's fillings $X_n$ have $\pi_1(X_n)\cong\mathbb Z/(2n{+}1)\rtimes_{-1}\mathbb Z$ \cite[Theorem~1]{BaykurPlanar}.  We prove \cref{thm:newfillings}, stated in the introduction.

\subsection{Half twists commuting with the monodromy}\label{subsec:commute}

\begin{figure}[ht]
\centering
\makebox[\textwidth][c]{%
\begin{tikzpicture}[scale=0.42]
\draw[thick,rounded corners=10pt] (0,-2.75) rectangle (15.600000000000001,2.75);
\draw[thick] (2.60,0.00) circle (0.35);
\draw[thick] (5.20,0.00) circle (0.35);
\draw[thick] (7.80,0.00) circle (0.35);
\draw[thick] (10.40,0.00) circle (0.35);
\draw[thick] (13.00,0.00) circle (0.35);
\draw[red!75!black,thick] plot[smooth cycle,tension=0.7] coordinates{(2.53,-0.69) (2.45,-0.68) (2.37,-0.66) (2.30,-0.63) (2.23,-0.59) (2.17,-0.55) (2.11,-0.50) (2.07,-0.45) (2.02,-0.39) (1.98,-0.33) (1.95,-0.26) (1.93,-0.18) (1.91,-0.10) (1.90,-0.00) (1.91,0.09) (1.92,0.17) (1.95,0.25) (1.98,0.32) (2.02,0.39) (2.06,0.44) (2.11,0.50) (2.16,0.54) (2.22,0.59) (2.29,0.62) (2.36,0.66) (2.44,0.68) (2.53,0.69) (2.63,0.70) (2.73,0.70) (2.83,0.70) (2.94,0.70) (3.04,0.70) (3.14,0.70) (3.25,0.70) (3.35,0.70) (3.46,0.70) (3.56,0.70) (3.66,0.70) (3.77,0.70) (3.87,0.70) (3.97,0.70) (4.08,0.70) (4.18,0.70) (4.29,0.70) (4.39,0.70) (4.49,0.70) (4.60,0.70) (4.70,0.70) (4.80,0.70) (4.91,0.70) (5.01,0.70) (5.12,0.70) (5.22,0.70) (5.31,0.69) (5.40,0.67) (5.47,0.64) (5.54,0.61) (5.60,0.57) (5.66,0.52) (5.71,0.47) (5.76,0.42) (5.80,0.36) (5.83,0.29) (5.86,0.22) (5.88,0.14) (5.89,0.05) (5.90,-0.04) (5.88,-0.13) (5.86,-0.21) (5.84,-0.28) (5.80,-0.35) (5.76,-0.41) (5.72,-0.47) (5.67,-0.52) (5.61,-0.57) (5.55,-0.61) (5.48,-0.64) (5.40,-0.67) (5.32,-0.69) (5.23,-0.70) (5.13,-0.70) (5.02,-0.70) (4.92,-0.70) (4.82,-0.70) (4.71,-0.70) (4.61,-0.70) (4.50,-0.70) (4.40,-0.70) (4.30,-0.70) (4.19,-0.70) (4.09,-0.70) (3.98,-0.70) (3.88,-0.70) (3.78,-0.70) (3.67,-0.70) (3.57,-0.70) (3.47,-0.70) (3.36,-0.70) (3.26,-0.70) (3.15,-0.70) (3.05,-0.70) (2.95,-0.70) (2.84,-0.70) (2.74,-0.70) (2.64,-0.70) (2.54,-0.69)};
\draw[red!75!black,thick] plot[smooth cycle,tension=0.7] coordinates{(7.73,-0.69) (7.65,-0.68) (7.57,-0.66) (7.50,-0.63) (7.43,-0.59) (7.37,-0.55) (7.32,-0.50) (7.27,-0.45) (7.22,-0.39) (7.18,-0.33) (7.15,-0.26) (7.13,-0.18) (7.11,-0.10) (7.10,-0.00) (7.11,0.09) (7.13,0.18) (7.15,0.25) (7.18,0.32) (7.22,0.39) (7.26,0.44) (7.31,0.50) (7.36,0.55) (7.42,0.59) (7.49,0.63) (7.56,0.66) (7.64,0.68) (7.73,0.69) (7.83,0.70) (7.93,0.70) (8.03,0.70) (8.14,0.70) (8.24,0.70) (8.35,0.70) (8.45,0.70) (8.55,0.70) (8.66,0.70) (8.76,0.70) (8.86,0.70) (8.97,0.70) (9.07,0.70) (9.18,0.70) (9.28,0.70) (9.38,0.70) (9.49,0.70) (9.59,0.70) (9.69,0.70) (9.80,0.70) (9.90,0.70) (10.01,0.70) (10.11,0.70) (10.21,0.70) (10.32,0.70) (10.42,0.70) (10.51,0.69) (10.60,0.67) (10.67,0.64) (10.74,0.61) (10.80,0.57) (10.86,0.52) (10.91,0.47) (10.96,0.42) (11.00,0.36) (11.03,0.29) (11.06,0.22) (11.08,0.14) (11.10,0.05) (11.10,-0.04) (11.09,-0.13) (11.07,-0.21) (11.04,-0.28) (11.00,-0.35) (10.96,-0.41) (10.92,-0.47) (10.87,-0.52) (10.81,-0.57) (10.75,-0.61) (10.68,-0.64) (10.60,-0.67) (10.52,-0.69) (10.43,-0.70) (10.33,-0.70) (10.22,-0.70) (10.12,-0.70) (10.02,-0.70) (9.91,-0.70) (9.81,-0.70) (9.71,-0.70) (9.60,-0.70) (9.50,-0.70) (9.39,-0.70) (9.29,-0.70) (9.19,-0.70) (9.08,-0.70) (8.98,-0.70) (8.87,-0.70) (8.77,-0.70) (8.67,-0.70) (8.56,-0.70) (8.46,-0.70) (8.36,-0.70) (8.25,-0.70) (8.15,-0.70) (8.04,-0.70) (7.94,-0.70) (7.84,-0.70) (7.74,-0.69)};
\node[red!75!black] at (3.90,1.25) {$\alpha$};
\node[red!75!black] at (9.10,1.25) {$\beta$};
\begin{scope}[shift={(16.8,0)}]
\draw[thick,rounded corners=10pt] (0,-2.75) rectangle (15.600000000000001,2.75);
\draw[thick] (2.60,0.00) circle (0.35);
\draw[blue!70!black] (7.80,-2.75) -- (3.175,-0.304);
\draw[blue!70!black,decoration={markings,mark=at position 0.55 with {\arrow{>}}},postaction={decorate}] (3.175,-0.304) arc (-27.87:332.13:0.65);
\node[blue!70!black] at (2.60,1.20) {\small$x_1$};
\draw[thick] (5.20,0.00) circle (0.35);
\draw[blue!70!black] (7.80,-2.75) -- (5.647,-0.472);
\draw[blue!70!black,decoration={markings,mark=at position 0.55 with {\arrow{>}}},postaction={decorate}] (5.647,-0.472) arc (-46.61:313.39:0.65);
\node[blue!70!black] at (5.20,1.20) {\small$x_2$};
\draw[thick] (7.80,0.00) circle (0.35);
\draw[blue!70!black] (7.80,-2.75) -- (7.800,-0.650);
\draw[blue!70!black,decoration={markings,mark=at position 0.55 with {\arrow{>}}},postaction={decorate}] (7.800,-0.650) arc (-90.00:270.00:0.65);
\node[blue!70!black] at (7.80,1.20) {\small$x_3$};
\draw[thick] (10.40,0.00) circle (0.35);
\draw[blue!70!black] (7.80,-2.75) -- (9.953,-0.472);
\draw[blue!70!black,decoration={markings,mark=at position 0.55 with {\arrow{>}}},postaction={decorate}] (9.953,-0.472) arc (-133.39:226.61:0.65);
\node[blue!70!black] at (10.40,1.20) {\small$x_4$};
\draw[thick] (13.00,0.00) circle (0.35);
\draw[blue!70!black] (7.80,-2.75) -- (12.425,-0.304);
\draw[blue!70!black,decoration={markings,mark=at position 0.55 with {\arrow{>}}},postaction={decorate}] (12.425,-0.304) arc (-152.13:207.87:0.65);
\node[blue!70!black] at (13.00,1.20) {\small$x_5$};
\fill (7.80,-2.75) circle (3pt); \node[below] at (7.80,-2.75) {$\ast$};
\end{scope}
\end{tikzpicture}}
\caption{Left: the curves $\alpha$ and $\beta$.  Right: the free generators
$x_1,\dots,x_5$ of $\pi_1(\Sigma_0^6,\ast)$, with $\ast\in\delta_0$.  The holes are $\delta_1,\ldots,\delta_5$
from left to right.}
\label{fig:alpha-beta-x}
\end{figure}

Fix a basepoint $\ast\in\delta_0$ and the free generators $x_1,\dots,x_5$ of
$\pi_1(\Sigma_0^6,\ast)$ of \cite[Figure~2(h)]{BaykurPlanar}, shown in
\cref{fig:alpha-beta-x}.  In these generators Baykur's curves are
$u=x_3x_2$, $v=x_5x_4x_2$, $w=x_5x_4x_3x_2x_1x_2^{-1}x_4^{-1}$,
$z=x_4x_2x_1x_2^{-1}$ and $d=x_3x_1$, up to conjugation \cite{BaykurPlanar}.
Each of these curves is the boundary of a disk containing some of the holes, and
a word representing such a curve has exponent sum $1$ in $x_k$ if the curve
encloses $\delta_k$ and $0$ otherwise.  The right-handed Dehn twist along the
boundary of a disk containing consecutive holes $\delta_i,\ldots,\delta_j$
conjugates $x_i,\ldots,x_j$ by the boundary word $x_j\cdots x_i$ and fixes the
other generators.

We fix the half-twist convention on $\pi_1(\Sigma_0^6,\ast)$, reading path
products from left to right. In this
convention the boundary of the two-hole disk containing $\delta_i$ and
$\delta_{i+1}$ is represented by $\gamma_i=x_{i+1}x_i$. For $1\le i\le4$, let $\sigma_i$ denote the clockwise half twist
interchanging $\delta_i$ and $\delta_{i+1}$, with the page oriented as in
\cref{fig:alpha-beta-x}. Its action is
\begin{equation}\label{eq:artin}
\sigma_i\colon\quad x_i\mapsto x_i^{-1}x_{i+1}x_i,\qquad
x_{i+1}\mapsto x_i,\qquad x_j\mapsto x_j\ (j\ne i,i+1).
\end{equation}
Its inverse acts by
\begin{equation}\label{eq:artininv}
\sigma_i^{-1}\colon\quad x_i\mapsto x_{i+1},\qquad
x_{i+1}\mapsto x_{i+1}x_ix_{i+1}^{-1},\qquad
x_j\mapsto x_j\ (j\ne i,i+1).
\end{equation}
These formulas fix $\gamma_i$ and satisfy
$\sigma_i^2(x_j)=\gamma_i^{-1}x_j\gamma_i$ for $j=i,i+1$. Thus they are compatible
with the based boundary word, and $\sigma_i^2$ acts as the left-handed Dehn
twist along the boundary of the disk containing $\delta_i,\delta_{i+1}$.

Let $\alpha$ and $\beta$ be the curves of \cref{fig:alpha-beta-x}.  Baykur's generalized lantern relation \cite[Lemma~3]{BaykurPlanar} reads
\begin{equation}\label{eq:lantern}
B\,t_\alpha t_\beta=\Delta,\qquad \Delta:=t_{\delta_0}\,t_{\delta_1}^2t_{\delta_2}^2t_{\delta_3}^2t_{\delta_4}^2\,t_{\delta_5}.
\end{equation}
To define the group in which the half twists live, fix parametrizations
$\theta_i\colon S^1\to\delta_i$ of the inner boundary components,
$i=1,\dots,5$.  Let $\widehat\Gamma$ be the group of isotopy classes of
orientation-preserving diffeomorphisms $h$ of $\Sigma$ that restrict to the
identity on $\delta_0$ and satisfy $h\circ\theta_i=\theta_{\pi(i)}$ for some
permutation $\pi$ of $\{1,\dots,5\}$, where isotopies are through
diffeomorphisms of the same kind with the same permutation. In particular,
isotopies fix every boundary component pointwise.  Restricting to $\pi=\mathrm{id}$
gives $\Gamma=\Mod(\Sigma,\partial\Sigma)$, which is a normal subgroup of
$\widehat\Gamma$, and the twists along the inner boundary components remain
nontrivial in $\widehat\Gamma$.  The half twists $\sigma_1$ and $\sigma_3$ of
\eqref{eq:artin} may be taken to be supported in the disks $D_\alpha$ and
$D_\beta$ bounded by $\alpha$ and $\beta$ and to satisfy this compatibility
with the parametrizations. We regard them as elements of $\widehat\Gamma$.
A different compatible choice changes such an element by twists along the
inner boundary components. These fix every closed curve up to isotopy and act
trivially on $\pi_1(\Sigma,\ast)$, so the choice does not affect anything
below.

\begin{proposition}\label[proposition]{prop:commute}
$\sigma_1B\sigma_1^{-1}=B$ and $\sigma_3B\sigma_3^{-1}=B$ in $\Gamma$.
\end{proposition}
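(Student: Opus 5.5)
Rather than computing $\sigma_1 B\sigma_1^{-1}$ factor by factor, I will use Baykur's generalized lantern relation \eqref{eq:lantern}, which gives $B=\Delta\,t_\beta^{-1}t_\alpha^{-1}$ in $\Gamma$ (all of $t_\alpha,t_\beta$ and the boundary twists commute, since $\alpha,\beta$ are disjoint from each other and from the boundary). So it suffices to show that conjugation by $\sigma_1$ and by $\sigma_3$ fixes $\Delta$, $t_\alpha$ and $t_\beta$ in $\widehat\Gamma$.

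Since $\Gamma$ is normal in $\widehat\Gamma$, $\sigma_iB\sigma_i^{-1}$ lies in $\Gamma$, and the identity may be checked in $\widehat\Gamma$. By the conjugation rule $h t_c h^{-1}=t_{h(c)}$, now for $h\in\widehat\Gamma$, we have:
\begin{enumerate}
\item $\sigma_1$ is supported in $D_\alpha$, so $\sigma_1(\alpha)=\alpha$, and $\sigma_1(\beta)=\beta$ because $\beta$ is disjoint from $D_\alpha$. Symmetrically, $\sigma_3$ preserves $\alpha$ and $\beta$. Hence both half twists commute with $t_\alpha$ and $t_\beta$.
\item $\sigma_1$ fixes $\delta_0,\delta_3,\delta_4,\delta_5$ and interchanges $\delta_1,\delta_2$, carrying $\theta_1$ to $\theta_2$ and vice versa. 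So $\sigma_1 t_{\delta_1}\sigma_1^{-1}=t_{\delta_2}$ and $\sigma_1 t_{\delta_2}\sigma_1^{-1}=t_{\delta_1}$.
\item In $\Delta$ the holes $\delta_1,\delta_2$ both carry exponent $2$, so $\sigma_1\Delta\sigma_1^{-1}=\Delta$, using that boundary twists are central. For $\sigma_3$, interchanging $\delta_3,\delta_4$, the exponents are again both $2$. This is exactly why $\sigma_3$ works but $\sigma_4$ would not: $\delta_5$ has exponent $1$.
\end{enumerate}
Combining these gives $\sigma_iB\sigma_i^{-1}=\Delta t_\beta^{-1}t_\alpha^{-1}=B$ for $i=1,3$.

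The main point needing care is that $ht_ch^{-1}=t_{h(c)}$ holds for boundary-parallel curves when $h$ permutes boundary components. This is where the compatibility $h\circ\theta_i=\theta_{\pi(i)}$ is used: it makes the twist along $\delta_i$, defined via a collar parametrized by $\theta_i$, go to the twist along $\delta_{\pi(i)}$ exactly, not merely up to an ambiguity. I would verify this directly from the collar model. I would also note that the lantern relation \eqref{eq:lantern} holds in $\Gamma\subset\widehat\Gamma$.

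As an independent check, I could verify on $\pi_1(\Sigma,\ast)$ using \eqref{eq:artin} that $\sigma_1$ permutes the set $\{u,v,w,z\}$ up to conjugacy in a way consistent with the relation. I expect this to be unnecessary for the proof, because the action on $\pi_1$ does not detect boundary twists and so could not by itself establish equality in $\Gamma$.
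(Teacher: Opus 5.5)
Your proposal is correct and matches the paper's proof: you rewrite $B=\Delta\,t_\beta^{-1}t_\alpha^{-1}$ using the generalized lantern relation, then observe that $\sigma_1$ and $\sigma_3$ preserve $\alpha$ and $\beta$ and only swap boundary components that carry equal exponents in $\Delta$. Your aside that the exponent of $\delta_5$ is ``exactly why'' $\sigma_4$ fails is overstated, since $\sigma_4$ is not supported in $D_\alpha$ or $D_\beta$ and also moves $\beta$, but this does not affect the argument.
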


\begin{proof}
Let $\sigma\in\{\sigma_1,\sigma_3\}$.  Since $\Delta$ is a product of twists along boundary-parallel curves, it is central in $\Gamma$, and \eqref{eq:lantern} gives
\begin{equation}\label{eq:Bformula}
B=\Delta\,t_\beta^{-1}t_\alpha^{-1}.
\end{equation}
Since $\sigma$ is orientation preserving, $\sigma t_c\sigma^{-1}=t_{\sigma(c)}$ for every simple closed curve $c$.  The support of $\sigma$ lies in the interior of $D_\alpha$ or of $D_\beta$. In either case $\sigma$ fixes both $\alpha$ and $\beta$ up to isotopy (one of them bounds the supporting disk, the other is disjoint from it).  Hence $\sigma t_\alpha\sigma^{-1}=t_\alpha$ and $\sigma t_\beta\sigma^{-1}=t_\beta$.  Finally $\sigma$ maps each $\delta_i$ to a $\delta_j$, interchanging either $\delta_1,\delta_2$ or $\delta_3,\delta_4$ and fixing the others, and the two interchanged boundary components appear in $\Delta$ with the same exponent. Hence $\sigma\Delta\sigma^{-1}=\Delta$.  Conjugating \eqref{eq:Bformula} by $\sigma$ therefore gives $\sigma B\sigma^{-1}=\Delta t_\beta^{-1}t_\alpha^{-1}=B$.
\end{proof}

\begin{corollary}\label[corollary]{cor:factorizations}
For every $f\in\widehat\Gamma$ commuting with $B$, for instance every $f$ in the subgroup generated by $\sigma_1,\sigma_3,t_\alpha$ (\cref{prop:commute}),
\begin{equation}\label{eq:Pg}
P_f:=t_d\;t_{f(u)}\,t_{f(v)}\,t_{f(w)}\,t_{f(z)}
\end{equation}
is a positive factorization of $\phi=t_dB$ in $\Gamma$ into Dehn twists along homotopically nontrivial, non-boundary-parallel curves.
\end{corollary}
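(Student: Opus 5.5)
The plan is to verify the two claims separately: that $P_f$ equals $\phi$ in $\Gamma$, and that each of its factors is a Dehn twist along a homotopically nontrivial, non-boundary-parallel curve.

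For the equality, we use that $f\in\widehat\Gamma$ is orientation preserving, so conjugation gives $ft_cf^{-1}=t_{f(c)}$ for every simple closed curve $c$. This identity holds in $\widehat\Gamma$ even when $f$ permutes inner boundary components: the twist $t_{f(c)}$ is supported near $f(c)$, and the identity is just naturality of Dehn twists under diffeomorphisms. Therefore
\[
t_{f(u)}t_{f(v)}t_{f(w)}t_{f(z)}=f\,t_ut_vt_wt_z\,f^{-1}=fBf^{-1}=B,
\]
using the hypothesis that $f$ commutes with $B$. The intermediate product $fBf^{-1}$ lies in $\Gamma$ because $\Gamma$ is normal in $\widehat\Gamma$, and each factor $t_{f(c)}$ is a twist along an interior curve, hence in $\Gamma$. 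Multiplying on the left by $t_d$ gives $P_f=t_dB=\phi$. For the example subgroup, $t_\alpha$ commutes with $B$ by Baykur's observation $[B,t_\alpha]=1$, and $\sigma_1,\sigma_3$ commute with $B$ by \cref{prop:commute}. Hence every element of the subgroup they generate commutes with $B$.

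For the nondegeneracy of the factors, $d$ is itself such a curve, since it encloses $\delta_1$ and $\delta_3$. Each of $u,v,w,z$ bounds a disk in the sphere containing at least two of the inner holes, and its complementary disk also contains at least two boundary components; for instance $u$ encloses $\delta_2,\delta_3$, and its complement contains $\delta_0,\delta_1,\delta_4,\delta_5$. Any diffeomorphism $f$ of $\Sigma$ preserves these properties: $f(c)$ is again a separating simple closed curve, and the numbers of boundary components on its two sides are unchanged because $f$ permutes the boundary components. So $f(c)$ is neither null-homotopic nor boundary parallel, which would require one side to contain at most one boundary component.

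The main point needing care, rather than an obstacle, is that $f$ may permute holes. The argument therefore has to be phrased in $\widehat\Gamma$, and it relies on the conjugation identity together with the normality of $\Gamma$ and the counting argument above, which are insensitive to the permutation. Homological nontriviality (allowability) also follows, since $f$ maps a curve separating the boundary components into two nonempty groups to another such curve.
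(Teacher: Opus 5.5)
Your proof is correct and follows essentially the paper's argument. You combine the conjugation identity $ft_cf^{-1}=t_{f(c)}$ with the hypothesis $fBf^{-1}=B$, using normality of $\Gamma$ in $\widehat\Gamma$ to handle hole-permuting $f$ (which the paper spells out in \cref{rem:extended}), and you note that a diffeomorphism permuting the holes preserves how many boundary components lie on each side of $u,v,w,z$.
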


\begin{proof}
Note that $t_{f(u)}t_{f(v)}t_{f(w)}t_{f(z)}=fBf^{-1}=B$ by \cref{prop:commute} and \cite[(8)]{BaykurPlanar}. The curves $f(c)$ are images of curves that enclose at least two holes under a diffeomorphism permuting the holes, so they are neither null-homotopic nor boundary-parallel.
\end{proof}

\begin{remark}\label[remark]{rem:extended}
The element $f$ need not lie in $\Gamma$, but this is harmless: it is only used to produce the curves $f(u),f(v),f(w),f(z)$, and the factorization \eqref{eq:Pg} is a word in $\Gamma$.  The identity $t_{f(u)}t_{f(v)}t_{f(w)}t_{f(z)}=B$ holds in $\Gamma$: for any orientation-preserving diffeomorphism $f$ of $\Sigma$ mapping $\partial\Sigma$ to itself, $f\,t_c\,f^{-1}=t_{f(c)}$ at the level of diffeomorphisms, and conjugation by $f$ carries isotopies rel $\partial\Sigma$ to isotopies rel $\partial\Sigma$, so the relation $B=\Delta t_\beta^{-1}t_\alpha^{-1}$ in $\Gamma$ is carried to $fBf^{-1}=(f\Delta f^{-1})\,t_{f(\beta)}^{-1}t_{f(\alpha)}^{-1}$ in $\Gamma$.  For the elements $f$ considered here, which preserve $\alpha$ and $\beta$ and permute only boundary components with equal exponents in $\Delta$, we have $f\Delta f^{-1}=\Delta$, and this is the argument of \cref{prop:commute}.  In the fundamental group computations below, $f$ is supported in $D_\alpha\cup D_\beta$ and hence fixes the basepoint on $\delta_0$.
\end{remark}

By \cite{AO,LP}, the Lefschetz fibration over the disk with fiber $\Sigma$ prescribed by $P_f$ has a Stein structure, and its boundary, with the induced contact structure, is the contact manifold supported by the open book $(\Sigma,\phi)$, i.e., $(Y,\xi)$.  We denote this Stein filling by $X_f$.  Its Euler characteristic is $\chi(\Sigma)+5=1$.

\subsection{Fundamental groups}\label{subsec:pi1new}

Let $x_1,\dots,x_5$ be the free generators of $\pi_1(\Sigma,\ast)=F_5$ shown in \cref{fig:alpha-beta-x}, with $\ast\in\delta_0$.  Let $Q=F_5/\ncl{u,v,w,z}$, where $N(S)$ denotes the normal closure of $S$.  Baykur
computes \cite[(12)--(16)]{BaykurPlanar} that $Q$ is generated by the images $a$
and $b$ of $x_1$ and $x_2$ in $Q$, with the single relation $a^2=b^2$.  This is the
fundamental group of the Klein bottle, and below we change generators to obtain
its standard presentation.  For the reader's convenience we recall the
computation.  The relations $u=x_3x_2=1$, $z=x_4x_2x_1x_2^{-1}=1$ and
$v=x_5x_4x_2=1$ give, in $Q$,
\begin{equation}\label{eq:subst}
x_3=b^{-1},\qquad x_4=x_2x_1^{-1}x_2^{-1}=ba^{-1}b^{-1},\qquad
x_5=x_2^{-1}x_4^{-1}=ab^{-1}.
\end{equation}
Substituting these into $w=x_5x_4x_3x_2x_1x_2^{-1}x_4^{-1}$, and using
$x_5x_4=(ab^{-1})(ba^{-1}b^{-1})=b^{-1}$, we get
$w=(b^{-1})\,b^{-1}ba\,b^{-1}(bab^{-1})=b^{-1}a^2b^{-1}$, and the relation $w=1$
becomes $a^2=b^2$.  This shows that $Q=\langle a,b\mid a^2=b^2\rangle$.  We now
replace the generators $a,b$ by $r=ab^{-1}$ and $b$, with $a=rb$.  In these
generators the relation $a^2=b^2$ becomes $rbrb=b^2$, which is equivalent to
$brb^{-1}=r^{-1}$.  Hence $Q$ has the presentation
$\langle r,b\mid brb^{-1}=r^{-1}\rangle$, which we use from now on.  Moreover
$b^2$ is central, and $x_2x_1=ba=brb=r^{-1}b^2$.  In summary,
\begin{equation}\label{eq:Q}
Q\cong\langle r,b\mid brb^{-1}=r^{-1}\rangle,\qquad b^2=a^2\ \text{central},\qquad x_2x_1=r^{-1}b^2.
\end{equation}
We record the identities, valid in $Q$,
\begin{equation}\label{eq:ids}
b^{-1}rb=r^{-1},\qquad br^{-1}=rb,\qquad b^{-1}r^kb=r^{-k}\ (k\in\Z).
\end{equation}

We recall how to compute the fundamental group of a Lefschetz fibration from its
vanishing cycles.  Let $X\to D^2$ be a Lefschetz fibration with regular fiber a
surface $\Sigma$ with nonempty boundary, and with vanishing cycles
$c_1,\ldots,c_k\subset\Sigma$.  Then $X$ is obtained from $\Sigma\times D^2$ by
attaching $2$--handles along copies of $c_1,\ldots,c_k$ in fibers over points of
$\partial D^2$, with framing $-1$ relative to the fiber framing
\cite[\S8.2]{GS}.  Since $\Sigma\times D^2$ deformation retracts onto
$\Sigma$, and attaching a $2$--handle along a curve kills the class of that
curve, the Seifert--van Kampen theorem gives
\[
\pi_1(X)\cong\pi_1(\Sigma,\ast)/\ncl{c_1,\ldots,c_k},
\]
where each $c_i$ is regarded as an element of $\pi_1(\Sigma,\ast)$ by joining it
to the base point by an arc.  A different choice of arc changes $c_i$ by
conjugation, and replacing $c_i$ by its inverse does not change the normal
closure either, so the right-hand side is well defined.  This vanishing-cycle
presentation gives, for the filling $X_f$ of \eqref{eq:Pg},
\begin{equation}\label{eq:pi1Xg}
\pi_1(X_f)\cong F_5/\ncl{d,\,f(u),\,f(v),\,f(w),\,f(z)},
\end{equation}
where the curves are regarded as elements of $F_5$ (well defined up to conjugation and inversion, which does not affect normal closures), and where $f$ acts on $F_5$ as the automorphism induced by the diffeomorphism $f$, which fixes $\ast\in\delta_0$.

\begin{lemma}\label[lemma]{lem:pi1}
For $f$ as in \cref{cor:factorizations}, $\pi_1(X_f)\cong Q/\ncl{\,\overline{f^{-1}(d)}\,}$, where the bar denotes the image in $Q$.
\end{lemma}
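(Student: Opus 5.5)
Starting from \eqref{eq:pi1Xg}, $\pi_1(X_f)\cong F_5/\ncl{d,f(u),f(v),f(w),f(z)}$, I will apply the automorphism $f^{-1}$ of $F_5$, which is legitimate because by \cref{rem:extended} $f$ is supported in $D_\alpha\cup D_\beta$, fixes $\ast\in\delta_0$, and so induces an honest automorphism of $F_5=\pi_1(\Sigma,\ast)$ (defined without conjugation ambiguity). An automorphism carries the normal closure of a set onto the normal closure of its image, so
\[
F_5/\ncl{d,f(u),f(v),f(w),f(z)}\;\cong\;F_5/\ncl{f^{-1}(d),u,v,w,z}.
\]
Here one should note that $f(u)$ as an element of $F_5$ means the image under $f_*$ of a word representing $u$; since curves are only defined up to conjugacy and inversion, and $f_*$ respects both, the normal closures agree with those of the curves $f(u),\dots$ as used in \eqref{eq:pi1Xg}. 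Likewise $f^{-1}(d)$ is represented by $f_*^{-1}$ of a word for $d$.

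Next I will use the standard third isomorphism fact: if $N_1=\ncl{u,v,w,z}$ and $N=\ncl{f^{-1}(d),u,v,w,z}$, then $N\supseteq N_1$ and $N/N_1$ is the normal closure in $F_5/N_1=Q$ of the image $\overline{f^{-1}(d)}$. Hence $F_5/N\cong Q/\ncl{\overline{f^{-1}(d)}}$, which is the claim.

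The only point needing care (the mild obstacle) is the first step: making sure that the automorphism of $F_5$ used in \eqref{eq:pi1Xg} is the one induced by $f$ at the basepoint, and that $f$ lying in $\widehat\Gamma$ rather than $\Gamma$ causes no trouble. Since $f$ is the identity on $\delta_0$, it fixes $\ast$ and $f_*$ is a genuine automorphism of $\pi_1(\Sigma,\ast)$; different compatible choices of $f$ differ by inner-boundary twists, which act trivially on $\pi_1(\Sigma,\ast)$ as noted before \cref{prop:commute}. With this, the argument is purely group-theoretic.
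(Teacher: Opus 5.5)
Your proposal is correct and is essentially the paper's proof: apply the automorphism $f^{-1}$ of $F_5$ to \eqref{eq:pi1Xg} to get $F_5/\ncl{f^{-1}(d),u,v,w,z}$, then view this as a quotient of $Q=F_5/\ncl{u,v,w,z}$. One small correction: that $f$ fixes $\ast$ follows from the definition of $\widehat\Gamma$, whose elements are the identity on $\delta_0$. The support in $D_\alpha\cup D_\beta$ that you cite from \cref{rem:extended} only holds for the specific $f$ used later, not for every $f$ commuting with $B$.
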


\begin{proof}
Apply the automorphism $f^{-1}$ of $F_5$ to \eqref{eq:pi1Xg}: it maps
$\ncl{d,f(u),f(v),f(w),f(z)}$ isomorphically onto $\ncl{f^{-1}(d),u,v,w,z}$,
and
\[
F_5/\ncl{f^{-1}(d),u,v,w,z}=\big(F_5/\ncl{u,v,w,z}\big)/\ncl{\overline{f^{-1}(d)}}.
\]
\end{proof}

\begin{lemma}\label[lemma]{lem:images}
With $d=x_3x_1$ and the convention \eqref{eq:artin}--\eqref{eq:artininv},
for every $n\ge0$ the image of $t_\alpha^{-n}\sigma_1^{-1}(d)$ in $Q$ is
$r^{-2n}$.
\end{lemma}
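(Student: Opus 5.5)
The plan is a direct computation in $F_5$, followed by reduction to $Q$ using \eqref{eq:Q} and \eqref{eq:ids}. Diffeomorphisms act on curves by composition: $t_\alpha^{-n}\sigma_1^{-1}(d)$ means first apply $\sigma_1^{-1}$ to $d$, then apply $t_\alpha^{-n}$. The induced automorphisms of $F_5$ compose in the same order, since both fix $\ast\in\delta_0$ (\cref{rem:extended}). Words are only defined up to conjugation and inversion, so I am free to conjugate at the end.

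\emph{Step 1.} Apply \eqref{eq:artininv} with $i=1$, which gives $x_1\mapsto x_2$ and fixes $x_3$. Hence $\sigma_1^{-1}(x_3x_1)=x_3x_2$, which is the word of $u$. For $n=0$ its image in $Q$ is $b^{-1}b=1=r^0$, using \eqref{eq:subst}.

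\emph{Step 2.} Determine the action of $t_\alpha$ on $F_5$, with the sign fixed by the paper's convention. By the discussion after \eqref{eq:artininv}, $\sigma_1^2$ is the \emph{left}-handed twist along $\alpha$ and acts by $x_j\mapsto\gamma_1^{-1}x_j\gamma_1$ for $j=1,2$, where $\gamma_1=x_2x_1$. So the right-handed twist acts by $t_\alpha(x_j)=\gamma_1x_j\gamma_1^{-1}$ for $j=1,2$ and fixes $x_3,x_4,x_5$. Consequently $t_\alpha^{-n}(x_j)=\gamma_1^{-n}x_j\gamma_1^{n}$ for $j=1,2$. Since $t_\alpha$ fixes $\gamma_1$, the exponent is consistent, and we get
\[
t_\alpha^{-n}\sigma_1^{-1}(d)=x_3\,\gamma_1^{-n}x_2\gamma_1^{n}.
\]

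\emph{Step 3.} Pass to $Q$. By \eqref{eq:Q}, $\bar\gamma_1=r^{-1}b^2$ with $b^2$ central, so $\bar\gamma_1^{\,n}=r^{-n}b^{2n}$, and conjugation by $\bar\gamma_1^{\,n}$ equals conjugation by $r^{-n}$. Using $x_3=b^{-1}$ and $x_2=b$, the image is
\[
b^{-1}\,r^{n}\,b\,r^{-n}.
\]
By \eqref{eq:ids}, $b^{-1}r^nb=r^{-n}$, so the image equals $r^{-2n}$, as claimed.

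The only delicate point is the sign convention in Step 2, namely whether $t_\alpha$ conjugates by $\gamma_1$ or by $\gamma_1^{-1}$. Getting it wrong would give $r^{-n}b\,r^{n}b^{-1}\cdot$ (a conjugate of the same form), which still reduces to $r^{\pm2n}$. Because $r^{2n}$ and $r^{-2n}$ are inverse to each other, they generate the same normal subgroup. So for the application through \cref{lem:pi1} the sign is harmless, but I would fix it via the $\sigma_1^2$ remark exactly as above.
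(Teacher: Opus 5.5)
Your proof is correct and follows the paper's argument essentially step by step. You compute $\sigma_1^{-1}(d)=x_3x_2$, use that $t_\alpha^{-n}$ conjugates $x_2$ by $\alpha^{-n}$ (you derive the direction from the $\sigma_1^2$ remark, while the paper states the twist convention directly), and then use $\bar\alpha=r^{-1}b^2$ with $b^2$ central to get $b^{-1}r^nbr^{-n}=r^{-2n}$. Your closing side remark contains a garbled expression: with the opposite sign one would get $b^{-1}r^{-n}br^{n}=r^{2n}$. The main argument does not use that remark.
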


\begin{proof}
By \eqref{eq:artininv}, $\sigma_1^{-1}(d)=x_3x_2$.  The curve $\alpha$ is the round curve enclosing $\delta_1$ and $\delta_2$, so
the right-handed twist $t_\alpha$ conjugates $x_1$ and $x_2$ by the boundary word
$x_2x_1$ of the disk containing these two holes and fixes $x_3,x_4,x_5$.
Since $\alpha=x_2x_1$ is fixed by $t_\alpha$, it follows by induction that
$t_\alpha^{-n}(x_i)=\alpha^{-n}x_i\alpha^n$ for $i=1,2$ and every $n\ge0$, and $t_\alpha$ fixes $x_3$
\cite[(17)--(18)]{BaykurPlanar}.  Hence $t_\alpha^{-n}\sigma_1^{-1}(d)=x_3\alpha^{-n}x_2\alpha^n$.  In $Q$ we have
$\overline{\alpha}=r^{-1}b^2$ with $b^2$ central, so conjugation by $\overline{\alpha}^{\,-n}$ is
conjugation by $r^n$, and by \eqref{eq:subst} and \eqref{eq:ids}
\[
x_3\alpha^{-n}x_2\alpha^n\longmapsto b^{-1}r^nbr^{-n}=r^{-n}r^{-n}=r^{-2n}.\qedhere
\]
\end{proof}

\begin{lemma}\label[lemma]{lem:groups}
For $m\ge0$, $Q/\ncl{r^m}\cong\langle r,b\mid r^m,\ brb^{-1}=r^{-1}\rangle=\Z/m\rtimes_{-1}\Z$.  Its abelianization is $\Z$ if $m$ is odd and $\Z\oplus\Z/2$ if $m$ is even (including $m=0$).  The groups $\Z/m\rtimes_{-1}\Z$, $m\ge0$, are pairwise non-isomorphic.
\end{lemma}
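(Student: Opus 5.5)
The first claim is immediate from the presentation \eqref{eq:Q}: $Q=\langle r,b\mid brb^{-1}=r^{-1}\rangle$, so adding $r^m$ as a relator gives $\langle r,b\mid r^m,\ brb^{-1}=r^{-1}\rangle$. I then identify this group with the semidirect product $\Z/m\rtimes_{-1}\Z$, where $\Z/0=\Z$. Define the map on generators by sending $r$ to the generator of $\Z/m$ and $b$ to the generator of $\Z$; this respects both relations, so it is a homomorphism. For the inverse, use the universal property of the semidirect product: send $\Z/m\to\langle r\rangle$ and $\Z\to\langle b\rangle$, and check that the conjugation action matches. The two maps are mutually inverse on generators.

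For the abelianization, abelianize the presentation. The relations become $mr=0$ and $2r=0$, with $b$ free, so $H_1\cong\Z/\gcd(m,2)\oplus\Z$. This is $\Z$ when $m$ is odd, and $\Z\oplus\Z/2$ when $m$ is even, including $m=0$ since $\gcd(0,2)=2$.

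For pairwise non-isomorphism, I look for an invariant that recovers $m$. In $G_m=\Z/m\rtimes_{-1}\Z$, every element can be written uniquely as $r^kb^j$.

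\begin{itemize}
\item \emph{Torsion.} An element with $j\neq0$ has infinite order, since it maps to $j\in\Z$. So the torsion elements are exactly those in $\langle r\rangle\cong\Z/m$, and for $m\ge1$ the set of torsion elements is this finite cyclic group of order $m$. This distinguishes all $m\ge1$ from each other.
\item \emph{The case $m=0$.} $G_0$ is the Klein bottle group, which is torsion-free and nonabelian. Among the groups with $m\ge1$, only $G_1\cong\Z$ is torsion-free, and it is abelian. So $G_0$ differs from every $G_m$ with $m\ge1$.
\end{itemize}

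A cleaner route for $m\ge1$ is that $\langle r\rangle$ is the torsion subgroup and its order is an isomorphism invariant.

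The main point needing care is the torsion claim: that no element $r^kb^j$ with $j\ne0$ has finite order. This follows because its image in the quotient $\Z$ (obtained by killing $r$) is $j\ne0$. Everything else is routine.
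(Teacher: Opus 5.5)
Your proof is correct and follows the paper's argument: the same presentation obtained by adding the relator $r^m$, the abelianization $\Z/\gcd(m,2)\oplus\Z$, and the key observation that every torsion element lies in $\langle r\rangle$ because its image in $\Z$ must vanish. The differences are only cosmetic. You use the size of the torsion subgroup as the invariant, where the paper uses the maximal order of a torsion element. You also separate $m=0$ from $m=1$ by commutativity, where the paper uses the abelianizations $\Z\oplus\Z/2$ versus $\Z$.
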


\begin{proof}
By \eqref{eq:Q}, $Q=\langle r,b\mid brb^{-1}=r^{-1}\rangle$, and dividing by the
normal closure of $r^m$ adds the single relator $r^m$, which gives the stated
presentation.  This is the standard presentation of the semidirect product
$\Z/m\rtimes_{-1}\Z$, with $r$ generating the normal factor $\Z/m$ (which is
$\Z$ if $m=0$) and $b$ generating the factor $\Z$, acting on it by
inversion.  Abelianizing imposes $2r=0$ and $mr=0$, so $r$ has order $\gcd(2,m)$, giving the stated abelianizations.  Every element of finite order lies in $\langle r\rangle$ (its image in $\Z$ must vanish), so for $m\ge2$ the maximal order of a torsion element is $m$, and these groups are pairwise non-isomorphic.  The remaining two groups are torsion-free: for $m=1$ the group is $\Z$, while for $m=0$ it is the non-abelian Klein bottle group $\langle r,b\mid brb^{-1}=r^{-1}\rangle$. They are distinguished from each other, and from the groups with $m\ge2$, by torsion and by their abelianizations $\Z$ and $\Z\oplus\Z/2$.
\end{proof}

\subsection{Proof of \texorpdfstring{\cref{thm:newfillings}}{Theorem 1.3}}\label{subsec:proof-newfillings}

For $n\ge0$ let
\[
f_n=\sigma_1\,t_\alpha^{\,n}.
\]
By \cref{prop:commute} and Baykur's $[B,t_\alpha]=1$, $f_n$ commutes with $B$,
so by \cref{cor:factorizations} $P_{f_n}$ is a positive factorization of
$\phi$ and $X_{f_n}$ is a Stein filling of $(Y,\xi)$ with $\chi(X_{f_n})=1$.
Since $f_n^{-1}=t_\alpha^{-n}\sigma_1^{-1}$, \cref{lem:images} shows that the
image of $f_n^{-1}(d)$ in $Q$ is $r^{-2n}$, so by
\cref{lem:pi1,lem:groups}
\[
\pi_1(X_{f_n})\cong Q/\ncl{r^{2n}}\cong\Z/2n\rtimes_{-1}\Z,
\]
with abelianization $\Z\oplus\Z/2$.  For $m$ odd let $W_m:=X_{(m-1)/2}$ be
Baykur's filling, with $\pi_1\cong\Z/m\rtimes_{-1}\Z$ by
\cite[(19)]{BaykurPlanar}, and for $m=2n$ even let $W_m:=X_{f_n}$.  In both
cases $H_1(W_m)$ has rank one and $W_m$ has a handle decomposition with
handles of index $\le2$, so $H_2(W_m)$ is free of rank $\chi-1+b_1=1$.
Pairwise non-homeomorphy follows from \cref{lem:groups}. \qed

\begin{example}\label[example]{prop:m0}
Take $f_0=\sigma_1$ in \cref{cor:factorizations}, the case $m=0$ of the proof
above.  Recall that $\sigma_1$ is the clockwise half twist interchanging the
holes $\delta_1$ and $\delta_2$.  Applying $\sigma_1$ to Baykur's curves
$u,v,w,z$ of \cref{fig:baykur-curves}, we get the curves $u'=d$, $v'$, $w'$ and
$z'$ of \cref{fig:newfact}.  In the free generators $x_1,\ldots,x_5$ of $F_5$
shown in \cref{fig:alpha-beta-x}, these curves are represented by
\[
d=x_3x_1,\qquad v'=x_5x_4x_1,\qquad w'=x_5x_4x_3x_2x_4^{-1},\qquad z'=x_4x_2 .
\]
Since $\sigma_1$ commutes with $B$, we have $B=\sigma_1B\sigma_1^{-1}=t_d\,t_{v'}t_{w'}t_{z'}$, and
\[
\phi=t_dB=t_d\,t_d\,t_{v'}\,t_{w'}\,t_{z'}.
\]
This is the factorization of the filling $W_0$ with
$\pi_1(W_0)\cong\pi_1(K)$.  This can also be seen directly: in
$F_5/\ncl{d,v',w',z'}$ the relations $d=1$ and $z'=1$ give $x_3=x_1^{-1}$ and
$x_4=x_2^{-1}$, then $v'=1$ gives $x_5=x_1^{-1}x_2$, and $w'=1$ becomes
$x_2x_1^{-2}x_2=1$, i.e., $x_1^2=x_2^2$.  Hence the group is
$\langle x_1,x_2\mid x_1^2=x_2^2\rangle=\pi_1(K)$.
\end{example}

\begin{figure}[ht]
\centering
\begin{tikzpicture}[scale=0.4]
\begin{scope}[shift={(0.0,0.0)}]
\draw[thick,rounded corners=10pt] (0,-2.75) rectangle (15.600000000000001,2.75);
\draw[thick] (2.60,0.00) circle (0.35);
\draw[thick] (5.20,0.00) circle (0.35);
\draw[thick] (7.80,0.00) circle (0.35);
\draw[thick] (10.40,0.00) circle (0.35);
\draw[thick] (13.00,0.00) circle (0.35);
\draw[teal!80!black,thick] plot[smooth cycle,tension=0.7] coordinates{(2.57,-2.35) (2.39,-2.31) (2.23,-2.24) (2.11,-2.14) (2.01,-2.02) (1.94,-1.87) (1.90,-1.68) (1.90,-1.46) (1.90,-1.24) (1.90,-1.02) (1.90,-0.80) (1.90,-0.58) (1.90,-0.36) (1.90,-0.14) (1.91,0.08) (1.95,0.26) (2.03,0.40) (2.14,0.52) (2.27,0.61) (2.43,0.68) (2.63,0.70) (2.82,0.66) (2.98,0.59) (3.10,0.49) (3.20,0.36) (3.27,0.21) (3.30,0.02) (3.30,-0.20) (3.30,-0.42) (3.30,-0.64) (3.30,-0.85) (3.43,-0.95) (3.66,-0.95) (3.89,-0.95) (4.12,-0.95) (4.35,-0.95) (4.58,-0.95) (4.80,-0.95) (5.03,-0.95) (5.26,-0.95) (5.49,-0.95) (5.72,-0.95) (5.95,-0.95) (6.18,-0.95) (6.40,-0.95) (6.63,-0.95) (6.86,-0.95) (7.06,-0.93) (7.10,-0.75) (7.10,-0.53) (7.10,-0.31) (7.10,-0.09) (7.11,0.13) (7.17,0.29) (7.25,0.43) (7.36,0.55) (7.50,0.63) (7.68,0.69) (7.88,0.69) (8.06,0.65) (8.21,0.57) (8.32,0.46) (8.42,0.33) (8.48,0.17) (8.50,-0.04) (8.50,-0.26) (8.50,-0.48) (8.50,-0.70) (8.50,-0.92) (8.50,-1.14) (8.50,-1.36) (8.50,-1.58) (8.48,-1.79) (8.43,-1.95) (8.34,-2.09) (8.23,-2.20) (8.09,-2.29) (7.91,-2.34) (7.70,-2.35) (7.47,-2.35) (7.24,-2.35) (7.02,-2.35) (6.79,-2.35) (6.56,-2.35) (6.33,-2.35) (6.10,-2.35) (5.87,-2.35) (5.65,-2.35) (5.42,-2.35) (5.19,-2.35) (4.96,-2.35) (4.73,-2.35) (4.50,-2.35) (4.28,-2.35) (4.05,-2.35) (3.82,-2.35) (3.59,-2.35) (3.36,-2.35) (3.13,-2.35) (2.91,-2.35) (2.68,-2.35)};
\node[teal!80!black] at (7.80,1.40) {$d$};
\end{scope}
\begin{scope}[shift={(16.8,0.0)}]
\draw[thick,rounded corners=10pt] (0,-2.75) rectangle (15.600000000000001,2.75);
\draw[thick] (2.60,0.00) circle (0.35);
\draw[thick] (5.20,0.00) circle (0.35);
\draw[thick] (7.80,0.00) circle (0.35);
\draw[thick] (10.40,0.00) circle (0.35);
\draw[thick] (13.00,0.00) circle (0.35);
\draw[blue!70!black,thick] plot[smooth cycle,tension=0.7] coordinates{(5.17,-2.35) (4.98,-2.31) (4.83,-2.24) (4.70,-2.14) (4.61,-2.02) (4.54,-1.86) (4.50,-1.68) (4.50,-1.46) (4.50,-1.24) (4.50,-1.02) (4.50,-0.80) (4.50,-0.58) (4.50,-0.36) (4.50,-0.14) (4.51,0.08) (4.55,0.26) (4.63,0.40) (4.73,0.52) (4.87,0.61) (5.03,0.68) (5.23,0.70) (5.42,0.66) (5.57,0.59) (5.70,0.49) (5.80,0.36) (5.86,0.21) (5.90,0.02) (5.90,-0.20) (5.90,-0.42) (5.90,-0.64) (5.90,-0.85) (6.03,-0.95) (6.26,-0.95) (6.49,-0.95) (6.72,-0.95) (6.94,-0.95) (7.17,-0.95) (7.40,-0.95) (7.63,-0.95) (7.86,-0.95) (8.09,-0.95) (8.31,-0.95) (8.54,-0.95) (8.77,-0.95) (9.00,-0.95) (9.23,-0.95) (9.46,-0.95) (9.66,-0.93) (9.70,-0.75) (9.70,-0.53) (9.70,-0.31) (9.70,-0.09) (9.71,0.12) (9.77,0.29) (9.85,0.43) (9.96,0.54) (10.10,0.63) (10.27,0.69) (10.48,0.69) (10.66,0.65) (10.80,0.57) (10.92,0.46) (11.01,0.33) (11.08,0.17) (11.10,-0.03) (11.10,-0.25) (11.10,-0.47) (11.10,-0.69) (11.10,-0.91) (11.10,-1.13) (11.10,-1.35) (11.10,-1.57) (11.09,-1.77) (11.03,-1.94) (10.95,-2.08) (10.84,-2.19) (10.70,-2.28) (10.53,-2.34) (10.32,-2.35) (10.09,-2.35) (9.86,-2.35) (9.63,-2.35) (9.40,-2.35) (9.18,-2.35) (8.95,-2.35) (8.72,-2.35) (8.49,-2.35) (8.26,-2.35) (8.03,-2.35) (7.81,-2.35) (7.58,-2.35) (7.35,-2.35) (7.12,-2.35) (6.89,-2.35) (6.66,-2.35) (6.43,-2.35) (6.21,-2.35) (5.98,-2.35) (5.75,-2.35) (5.52,-2.35) (5.29,-2.35)};
\node[blue!70!black] at (10.40,1.40) {$z'$};
\end{scope}
\begin{scope}[shift={(0.0,-6.3)}]
\draw[thick,rounded corners=10pt] (0,-2.75) rectangle (15.600000000000001,2.75);
\draw[thick] (2.60,0.00) circle (0.35);
\draw[thick] (5.20,0.00) circle (0.35);
\draw[thick] (7.80,0.00) circle (0.35);
\draw[thick] (10.40,0.00) circle (0.35);
\draw[thick] (13.00,0.00) circle (0.35);
\draw[violet,thick] plot[smooth cycle,tension=0.7] coordinates{(2.57,-2.35) (2.31,-2.29) (2.12,-2.15) (1.98,-1.97) (1.91,-1.71) (1.90,-1.40) (1.90,-1.08) (1.90,-0.76) (1.90,-0.44) (1.90,-0.12) (1.93,0.18) (2.03,0.40) (2.19,0.57) (2.42,0.67) (2.71,0.69) (2.95,0.60) (3.13,0.46) (3.25,0.25) (3.30,-0.02) (3.30,-0.34) (3.30,-0.66) (3.35,-0.93) (3.66,-0.95) (4.00,-0.95) (4.33,-0.95) (4.66,-0.95) (4.99,-0.95) (5.32,-0.95) (5.66,-0.95) (5.99,-0.95) (6.32,-0.95) (6.65,-0.95) (6.99,-0.95) (7.32,-0.95) (7.65,-0.95) (7.98,-0.95) (8.31,-0.95) (8.65,-0.95) (8.98,-0.95) (9.31,-0.95) (9.63,-0.94) (9.70,-0.69) (9.70,-0.37) (9.70,-0.05) (9.74,0.23) (9.86,0.44) (10.03,0.59) (10.27,0.69) (10.59,0.70) (10.92,0.70) (11.25,0.70) (11.58,0.70) (11.92,0.70) (12.25,0.70) (12.58,0.70) (12.91,0.70) (13.21,0.67) (13.42,0.55) (13.58,0.39) (13.68,0.16) (13.68,-0.13) (13.59,-0.37) (13.44,-0.54) (13.23,-0.66) (12.94,-0.70) (12.61,-0.70) (12.28,-0.70) (11.95,-0.70) (11.62,-0.70) (11.28,-0.70) (11.10,-0.85) (11.10,-1.17) (11.10,-1.49) (11.08,-1.79) (10.99,-2.02) (10.84,-2.19) (10.62,-2.31) (10.34,-2.35) (10.01,-2.35) (9.67,-2.35) (9.34,-2.35) (9.01,-2.35) (8.68,-2.35) (8.35,-2.35) (8.01,-2.35) (7.68,-2.35) (7.35,-2.35) (7.02,-2.35) (6.68,-2.35) (6.35,-2.35) (6.02,-2.35) (5.69,-2.35) (5.36,-2.35) (5.02,-2.35) (4.69,-2.35) (4.36,-2.35) (4.03,-2.35) (3.69,-2.35) (3.36,-2.35) (3.03,-2.35) (2.70,-2.35)};
\node[violet] at (11.70,1.50) {$v'$};
\end{scope}
\begin{scope}[shift={(16.8,-6.3)}]
\draw[thick,rounded corners=10pt] (0,-2.75) rectangle (15.600000000000001,2.75);
\draw[thick] (2.60,0.00) circle (0.35);
\draw[thick] (5.20,0.00) circle (0.35);
\draw[thick] (7.80,0.00) circle (0.35);
\draw[thick] (10.40,0.00) circle (0.35);
\draw[thick] (13.00,0.00) circle (0.35);
\draw[green!55!black,thick] plot[smooth cycle,tension=0.7] coordinates{(5.14,-0.69) (4.92,-0.64) (4.75,-0.53) (4.62,-0.39) (4.53,-0.20) (4.50,0.03) (4.55,0.26) (4.65,0.43) (4.79,0.57) (4.97,0.66) (5.21,0.70) (5.49,0.70) (5.77,0.70) (6.05,0.70) (6.33,0.70) (6.61,0.70) (6.89,0.70) (7.09,0.77) (7.10,1.04) (7.10,1.31) (7.10,1.58) (7.13,1.83) (7.21,2.02) (7.33,2.17) (7.50,2.28) (7.71,2.34) (7.98,2.35) (8.26,2.35) (8.54,2.35) (8.82,2.35) (9.10,2.35) (9.38,2.35) (9.66,2.35) (9.94,2.35) (10.22,2.35) (10.50,2.35) (10.78,2.35) (11.06,2.35) (11.35,2.35) (11.63,2.35) (11.91,2.35) (12.19,2.35) (12.47,2.35) (12.75,2.35) (13.02,2.35) (13.25,2.30) (13.43,2.20) (13.56,2.06) (13.66,1.88) (13.70,1.65) (13.70,1.38) (13.70,1.11) (13.70,0.84) (13.70,0.57) (13.70,0.30) (13.70,0.03) (13.66,-0.21) (13.57,-0.40) (13.44,-0.54) (13.27,-0.64) (13.04,-0.70) (12.80,-0.67) (12.61,-0.58) (12.47,-0.45) (12.36,-0.28) (12.31,-0.07) (12.30,0.20) (12.30,0.47) (12.30,0.74) (12.23,0.94) (11.96,0.95) (11.68,0.95) (11.40,0.95) (11.12,0.95) (10.84,0.95) (10.56,0.95) (10.28,0.95) (10.00,0.95) (9.72,0.95) (9.44,0.95) (9.15,0.95) (8.87,0.95) (8.60,0.95) (8.50,0.78) (8.50,0.51) (8.50,0.24) (8.50,-0.03) (8.45,-0.26) (8.35,-0.43) (8.21,-0.57) (8.02,-0.66) (7.78,-0.70) (7.50,-0.70) (7.22,-0.70) (6.94,-0.70) (6.66,-0.70) (6.38,-0.70) (6.10,-0.70) (5.82,-0.70) (5.54,-0.70) (5.26,-0.70)};
\node[green!55!black] at (6.50,-1.55) {$w'$};
\end{scope}
\end{tikzpicture}
\caption{The vanishing cycles $d$ (twice), $v'$, $w'$, $z'$ of the factorization $\phi=t_d^2\,t_{v'}t_{w'}t_{z'}$ of \cref{prop:m0}, whose Stein filling has fundamental group $\pi_1(K)$.}
\label{fig:newfact}
\end{figure}

\subsection{The filling \texorpdfstring{$W_0$}{W\_0} and the Seifert fibration of \texorpdfstring{$Y$}{Y}}\label{subsec:W0}

The filling $W_0=X(\Sigma_0^6;d,d,v',w',z')$ of \cref{prop:m0} has a
particularly simple description, which exhibits the Seifert fibration of $Y$.
Recall from the proof of \cref{thm:klein-main} that \cref{fig:klein-kirby}
is the standard Kirby diagram of the disk bundle $DT^*K$ over the Klein bottle
with Euler number zero \cite[\S4.6]{GS}.  In this diagram the $2$--handle is
the part $D\times D^2$ of the bundle over a $2$--cell $D$ of $K$.  Hence its
cocore is a fiber disk $\{q\}\times D^2$, and its belt circle, i.e., a
meridian of its attaching circle, is a fiber of the circle bundle
$\partial DT^*K=ST^*K\to K$.  Moreover, the framing $0$ of this meridian in the
diagram is the framing induced by a disk in the boundary of the $0$--handle
which meets the attaching circle once.  This is the framing induced by the
cocore, which is the framing of the fiber given by the bundle, since a nearby
fiber bounds a disjoint fiber disk.

\begin{figure}[htbp]
\centering
\begin{tikzpicture}[xscale=0.72,line cap=round, line join=round]
\draw[teal!80!black,line width=0.8pt] plot coordinates{(2.127,6.062) (2.200,6.058) (2.324,6.047) (2.418,6.031) (2.496,6.010) (2.551,5.986) (2.574,5.955) (2.574,5.920) (2.574,5.884) (2.574,5.849) (2.574,5.815) (2.675,5.799) (2.855,5.799) (3.034,5.799) (3.214,5.799) (3.393,5.799) (3.572,5.799) (3.744,5.799) (3.923,5.799) (4.103,5.799) (4.282,5.799) (4.462,5.799) (4.641,5.799) (4.820,5.799) (4.992,5.799) (5.171,5.799) (5.351,5.799) (5.507,5.802) (5.538,5.831) (5.538,5.867) (5.538,5.902) (5.538,5.937) (5.546,5.973) (5.593,5.999) (5.655,6.021) (5.741,6.041) (5.850,6.054) (5.990,6.063) (6.005,6.063)};
\draw[teal!80!black,line width=0.8pt] plot coordinates{(6.184,6.062) (6.287,6.057) (6.404,6.044) (6.490,6.026) (6.568,6.005) (6.614,5.979) (6.630,5.946) (6.630,5.910) (6.630,5.875) (6.630,5.839) (6.630,5.804) (6.630,5.768) (6.630,5.733) (6.630,5.697) (6.614,5.663) (6.575,5.638) (6.505,5.615) (6.419,5.597) (6.310,5.583) (6.170,5.575) (6.006,5.573) (5.827,5.573) (5.647,5.573) (5.476,5.573) (5.296,5.573) (5.117,5.573) (4.937,5.573) (4.758,5.573) (4.579,5.573) (4.407,5.573) (4.228,5.573) (4.048,5.573) (3.869,5.573) (3.689,5.573) (3.510,5.573) (3.338,5.573) (3.159,5.573) (2.980,5.573) (2.800,5.573) (2.621,5.573) (2.441,5.573) (2.270,5.573) (2.090,5.573) (2.005,5.573) (1.864,5.580) (1.739,5.591) (1.646,5.607) (1.568,5.626) (1.513,5.651) (1.482,5.681) (1.482,5.717) (1.482,5.752) (1.482,5.788) (1.482,5.823) (1.482,5.859) (1.482,5.894) (1.482,5.929) (1.490,5.965) (1.521,5.994) (1.583,6.016) (1.669,6.036) (1.771,6.050) (1.895,6.062) (1.949,6.063)};
\draw[orange!85!black,line width=0.8pt] plot coordinates{(2.127,4.946) (2.200,4.942) (2.324,4.931) (2.418,4.915) (2.496,4.894) (2.551,4.870) (2.574,4.839) (2.574,4.804) (2.574,4.768) (2.574,4.733) (2.574,4.699) (2.675,4.683) (2.855,4.683) (3.034,4.683) (3.214,4.683) (3.393,4.683) (3.572,4.683) (3.744,4.683) (3.923,4.683) (4.103,4.683) (4.282,4.683) (4.462,4.683) (4.641,4.683) (4.820,4.683) (4.992,4.683) (5.171,4.683) (5.351,4.683) (5.507,4.686) (5.538,4.715) (5.538,4.751) (5.538,4.786) (5.538,4.821) (5.546,4.857) (5.593,4.883) (5.655,4.905) (5.741,4.925) (5.850,4.938) (5.990,4.947) (6.005,4.947)};
\draw[orange!85!black,line width=0.8pt] plot coordinates{(6.184,4.946) (6.287,4.941) (6.404,4.928) (6.490,4.910) (6.568,4.889) (6.614,4.863) (6.630,4.830) (6.630,4.794) (6.630,4.759) (6.630,4.723) (6.630,4.688) (6.630,4.652) (6.630,4.617) (6.630,4.581) (6.614,4.547) (6.575,4.522) (6.505,4.499) (6.419,4.481) (6.310,4.467) (6.170,4.459) (6.006,4.457) (5.827,4.457) (5.647,4.457) (5.476,4.457) (5.296,4.457) (5.117,4.457) (4.937,4.457) (4.758,4.457) (4.579,4.457) (4.407,4.457) (4.228,4.457) (4.048,4.457) (3.869,4.457) (3.689,4.457) (3.510,4.457) (3.338,4.457) (3.159,4.457) (2.980,4.457) (2.800,4.457) (2.621,4.457) (2.441,4.457) (2.270,4.457) (2.090,4.457) (2.005,4.457) (1.864,4.464) (1.739,4.475) (1.646,4.491) (1.568,4.510) (1.513,4.535) (1.482,4.565) (1.482,4.601) (1.482,4.636) (1.482,4.672) (1.482,4.707) (1.482,4.743) (1.482,4.778) (1.482,4.813) (1.490,4.849) (1.521,4.878) (1.583,4.900) (1.669,4.920) (1.771,4.934) (1.895,4.946) (1.949,4.947)};
\draw[violet,line width=0.8pt] plot coordinates{(2.128,3.830) (2.301,3.817) (2.441,3.794) (2.535,3.760) (2.574,3.717) (2.574,3.665) (2.574,3.614) (2.613,3.570) (2.855,3.567) (3.120,3.567) (3.377,3.567) (3.635,3.567) (3.892,3.567) (4.150,3.567) (4.415,3.567) (4.672,3.567) (4.930,3.567) (5.187,3.567) (5.452,3.567) (5.710,3.567) (5.967,3.567) (6.224,3.567) (6.482,3.567) (6.747,3.567) (7.004,3.567) (7.262,3.567) (7.511,3.568) (7.566,3.609) (7.566,3.660) (7.566,3.712) (7.597,3.757) (7.691,3.791) (7.823,3.815) (8.011,3.831) (8.033,3.831)};
\draw[violet,line width=0.8pt] plot coordinates{(8.213,3.833) (8.260,3.833) (8.518,3.833) (8.775,3.833) (9.032,3.833) (9.298,3.833) (9.555,3.833) (9.812,3.833) (10.061,3.833)};
\draw[violet,line width=0.8pt] plot coordinates{(10.240,3.829) (10.304,3.828) (10.468,3.809) (10.592,3.783) (10.670,3.746) (10.670,3.699) (10.600,3.660) (10.483,3.633) (10.319,3.614) (10.093,3.607) (9.836,3.607) (9.578,3.607) (9.321,3.607) (9.064,3.607) (8.798,3.607) (8.658,3.583) (8.658,3.531) (8.658,3.480) (8.642,3.431) (8.572,3.394) (8.455,3.367) (8.284,3.348) (8.065,3.341) (7.808,3.341) (7.543,3.341) (7.285,3.341) (7.028,3.341) (6.770,3.341) (6.513,3.341) (6.248,3.341) (5.990,3.341) (5.733,3.341) (5.476,3.341) (5.210,3.341) (4.953,3.341) (4.696,3.341) (4.438,3.341) (4.181,3.341) (3.916,3.341) (3.658,3.341) (3.401,3.341) (3.143,3.341) (2.878,3.341) (2.621,3.341) (2.363,3.341) (2.106,3.341) (2.005,3.341) (1.802,3.351) (1.654,3.373) (1.544,3.402) (1.490,3.444) (1.482,3.494) (1.482,3.546) (1.482,3.597) (1.482,3.649) (1.482,3.701) (1.505,3.749) (1.583,3.784) (1.708,3.812) (1.888,3.828) (1.949,3.829)};
\draw[green!55!black,line width=0.8pt] plot coordinates{(4.157,2.531) (4.282,2.531) (4.501,2.531) (4.719,2.531) (4.937,2.531) (5.156,2.531) (5.374,2.531) (5.530,2.542) (5.538,2.586) (5.538,2.629) (5.538,2.673) (5.561,2.713) (5.624,2.744) (5.717,2.768) (5.850,2.786) (6.005,2.795)};
\draw[green!55!black,line width=0.8pt] plot coordinates{(6.185,2.797) (6.224,2.797) (6.443,2.797) (6.661,2.797) (6.880,2.797) (7.098,2.797) (7.316,2.797) (7.535,2.797) (7.753,2.797) (7.972,2.797) (8.033,2.797)};
\draw[green!55!black,line width=0.8pt] plot coordinates{(8.213,2.797) (8.408,2.797) (8.627,2.797) (8.853,2.797) (9.071,2.797) (9.290,2.797) (9.508,2.797) (9.727,2.797) (9.945,2.797) (10.061,2.797)};
\draw[green!55!black,line width=0.8pt] plot coordinates{(10.239,2.793) (10.335,2.789) (10.475,2.773) (10.577,2.750) (10.655,2.721) (10.686,2.684) (10.686,2.640) (10.686,2.597) (10.686,2.553) (10.686,2.510) (10.686,2.466) (10.686,2.423) (10.655,2.384) (10.585,2.354) (10.483,2.331) (10.351,2.315) (10.171,2.305) (9.984,2.310) (9.836,2.325) (9.727,2.345) (9.641,2.373) (9.602,2.407) (9.594,2.450) (9.594,2.494) (9.594,2.537) (9.539,2.570) (9.329,2.571) (9.110,2.571) (8.892,2.571) (8.674,2.571) (8.455,2.571) (8.237,2.571) (8.213,2.571)};
\draw[green!55!black,line width=0.8pt] plot coordinates{(8.033,2.571) (8.018,2.571) (7.800,2.571) (7.582,2.571) (7.363,2.571) (7.137,2.571) (6.919,2.571) (6.708,2.571) (6.630,2.544) (6.630,2.500) (6.630,2.457) (6.630,2.413) (6.591,2.376) (6.513,2.349) (6.404,2.326) (6.256,2.312) (6.068,2.305) (5.850,2.305) (5.632,2.305) (5.413,2.305) (5.195,2.305) (4.976,2.305) (4.758,2.305) (4.540,2.305) (4.321,2.305) (4.103,2.305) (4.009,2.307) (3.838,2.315) (3.705,2.333) (3.604,2.355) (3.533,2.386) (3.510,2.423) (3.549,2.460) (3.627,2.487) (3.736,2.510) (3.877,2.524) (3.978,2.528)};
\draw[blue!70!black,line width=0.8pt] plot coordinates{(4.155,1.288) (4.228,1.284) (4.345,1.273) (4.446,1.257) (4.524,1.236) (4.571,1.212) (4.602,1.181) (4.602,1.146) (4.602,1.110) (4.602,1.075) (4.602,1.041) (4.703,1.025) (4.883,1.025) (5.062,1.025) (5.242,1.025) (5.413,1.025) (5.593,1.025) (5.772,1.025) (5.951,1.025) (6.131,1.025) (6.310,1.025) (6.482,1.025) (6.661,1.025) (6.841,1.025) (7.020,1.025) (7.199,1.025) (7.379,1.025) (7.535,1.028) (7.566,1.057) (7.566,1.093) (7.566,1.128) (7.566,1.163) (7.574,1.197) (7.621,1.225) (7.683,1.247) (7.769,1.265) (7.878,1.280) (8.011,1.289) (8.033,1.289)};
\draw[blue!70!black,line width=0.8pt] plot coordinates{(8.212,1.288) (8.315,1.283) (8.424,1.270) (8.518,1.252) (8.588,1.231) (8.642,1.205) (8.658,1.173) (8.658,1.138) (8.658,1.102) (8.658,1.067) (8.658,1.031) (8.658,0.996) (8.658,0.960) (8.658,0.925) (8.650,0.893) (8.603,0.865) (8.541,0.843) (8.455,0.825) (8.346,0.810) (8.213,0.801) (8.050,0.799) (7.870,0.799) (7.691,0.799) (7.511,0.799) (7.332,0.799) (7.160,0.799) (6.981,0.799) (6.802,0.799) (6.622,0.799) (6.443,0.799) (6.263,0.799) (6.092,0.799) (5.912,0.799) (5.733,0.799) (5.554,0.799) (5.374,0.799) (5.195,0.799) (5.015,0.799) (4.844,0.799) (4.664,0.799) (4.485,0.799) (4.306,0.799) (4.126,0.799) (4.033,0.799) (3.884,0.806) (3.767,0.817) (3.666,0.833) (3.596,0.852) (3.541,0.878) (3.510,0.907) (3.510,0.943) (3.510,0.978) (3.510,1.014) (3.510,1.049) (3.510,1.085) (3.510,1.120) (3.510,1.155) (3.518,1.191) (3.549,1.220) (3.611,1.242) (3.689,1.262) (3.799,1.276) (3.923,1.288) (3.977,1.289)};
\node[teal!80!black,left] at (1.432,5.952) {$d$};
\node[teal!80!black,right] at (6.680,5.952) {$-1$};
\node[orange!85!black,left] at (1.432,4.836) {$d^*$};
\node[orange!85!black,right] at (6.680,4.836) {$-1$};
\node[violet,left] at (1.432,3.720) {$v'$};
\node[violet,right] at (10.720,3.720) {$-1$};
\node[green!55!black,left] at (3.460,2.418) {$w'$};
\node[green!55!black,right] at (10.736,2.418) {$-1$};
\node[blue!70!black,left] at (3.460,1.178) {$z'$};
\node[blue!70!black,right] at (8.708,1.178) {$-1$};
\draw[black,line width=0.8pt] (2.028,0.434) -- (2.028,3.241);
\draw[black,line width=0.8pt] (2.028,3.441) -- (2.028,4.357);
\draw[black,line width=0.8pt] (2.028,4.557) -- (2.028,5.473);
\draw[black,line width=0.8pt] (2.028,5.673) -- (2.028,6.944);
\fill (2.028,6.594) circle (2.2pt);
\node[above] at (2.028,7.006) {$\delta_{1}$};
\draw[black,line width=0.8pt] (4.056,0.434) -- (4.056,0.699);
\draw[black,line width=0.8pt] (4.056,0.899) -- (4.056,2.206);
\draw[black,line width=0.8pt] (4.056,2.406) -- (4.056,3.241);
\draw[black,line width=0.8pt] (4.056,3.441) -- (4.056,3.467);
\draw[black,line width=0.8pt] (4.056,3.667) -- (4.056,4.357);
\draw[black,line width=0.8pt] (4.056,4.557) -- (4.056,4.583);
\draw[black,line width=0.8pt] (4.056,4.783) -- (4.056,5.473);
\draw[black,line width=0.8pt] (4.056,5.673) -- (4.056,5.699);
\draw[black,line width=0.8pt] (4.056,5.899) -- (4.056,6.944);
\fill (4.056,6.594) circle (2.2pt);
\node[above] at (4.056,7.006) {$\delta_{2}$};
\draw[black,line width=0.8pt] (6.084,0.434) -- (6.084,0.699);
\draw[black,line width=0.8pt] (6.084,0.899) -- (6.084,0.925);
\draw[black,line width=0.8pt] (6.084,1.125) -- (6.084,2.206);
\draw[black,line width=0.8pt] (6.084,2.406) -- (6.084,3.241);
\draw[black,line width=0.8pt] (6.084,3.441) -- (6.084,3.467);
\draw[black,line width=0.8pt] (6.084,3.667) -- (6.084,4.358);
\draw[black,line width=0.8pt] (6.084,4.558) -- (6.084,5.474);
\draw[black,line width=0.8pt] (6.084,5.674) -- (6.084,6.944);
\fill (6.084,6.594) circle (2.2pt);
\node[above] at (6.084,7.006) {$\delta_{3}$};
\draw[black,line width=0.8pt] (8.112,0.434) -- (8.112,0.700);
\draw[black,line width=0.8pt] (8.112,0.900) -- (8.112,3.243);
\draw[black,line width=0.8pt] (8.112,3.443) -- (8.112,6.944);
\fill (8.112,6.594) circle (2.2pt);
\node[above] at (8.112,7.006) {$\delta_{4}$};
\draw[black,line width=0.8pt] (10.140,0.434) -- (10.140,2.206);
\draw[black,line width=0.8pt] (10.140,2.406) -- (10.140,3.509);
\draw[black,line width=0.8pt] (10.140,3.709) -- (10.140,6.944);
\fill (10.140,6.594) circle (2.2pt);
\node[above] at (10.140,7.006) {$\delta_{5}$};
\end{tikzpicture}
\caption{The Kirby diagram of the Lefschetz fibration $W_0$ of
\cref{prop:m0}.  The five vertical arcs are the dotted circles of the
$1$--handles dual to $\delta_1,\ldots,\delta_5$ (drawn through infinity), and
the $2$--handles are attached along the vanishing cycles $d$, $d^*$, $v'$,
$w'$, $z'$ of \cref{fig:newfact}, where $d^*$ is a second copy of $d$, on successive pages from top to bottom, with
framing $-1$ relative to the page.}
\label{fig:W0-lefschetz}
\end{figure}

\begin{figure}[htbp]
\centering
\begin{tikzpicture}[scale=0.8, line cap=round, line join=round]
\def\cc{2.3}
\draw[green!55!black,line width=0.9pt]
  (-4.2,-0.6) -- (4.2,-0.6) arc[start angle=-90,end angle=90,radius=0.6]
  -- (-4.2,0.6) arc[start angle=90,end angle=270,radius=0.6];
\draw[black,line width=0.9pt] (\cc,0) ellipse[x radius=0.75,y radius=1.5];
\draw[black,line width=0.9pt] (-\cc,0) ellipse[x radius=0.75,y radius=1.5];
\fill ({\cc+0.75*cos(60)},{1.5*sin(60)}) circle (2.6pt);
\fill ({-\cc+0.75*cos(60)},{1.5*sin(60)}) circle (2.6pt);
\foreach \x/\y in {-2.987/-0.6, -1.613/0.6, 1.613/-0.6, 2.987/0.6}{
  \draw[white,line width=4.5pt,line cap=butt] (\x-0.28,\y) -- (\x+0.28,\y);
  \draw[green!55!black,line width=0.9pt] (\x-0.3,\y) -- (\x+0.3,\y);}
\foreach \cx/\t in {-2.3/156.42, -2.3/-23.58, 2.3/156.42, 2.3/-23.58}{
  \draw[white,line width=4.5pt,line cap=butt,domain=\t-11:\t+11,samples=12]
    plot ({\cx+0.75*cos(\x)},{1.5*sin(\x)});
  \draw[black,line width=0.9pt,domain=\t-12:\t+12,samples=12]
    plot ({\cx+0.75*cos(\x)},{1.5*sin(\x)});}
\draw[orange!85!black,line width=0.9pt] (0,0.6) ellipse[x radius=0.25,y radius=0.42];
\draw[white,line width=4.5pt,line cap=butt] (-0.37,0.6) -- (-0.13,0.6);
\draw[green!55!black,line width=0.9pt] (-0.4,0.6) -- (-0.1,0.6);
\draw[white,line width=4.5pt,line cap=butt,domain=-25:25,samples=10]
  plot ({0.25*cos(\x)},{0.6+0.42*sin(\x)});
\draw[orange!85!black,line width=0.9pt,domain=-30:30,samples=10]
  plot ({0.25*cos(\x)},{0.6+0.42*sin(\x)});
\node[green!55!black] at (4.3,-1.05) {$-4$};
\node[orange!85!black] at (0,1.35) {$-2$};
\end{tikzpicture}
\caption{A Kirby diagram of the filling $W_0$, obtained from the Lefschetz
fibration of \cref{prop:m0} by the handle moves in the proof of
\cref{prop:W0}, which is also the diagram of $DT^*K$ of \cref{fig:klein-kirby} together
with a $2$--handle of framing $-2$ attached along a meridian of its
$2$--handle, i.e., along a circle fiber of $ST^*K\to K$.}
\label{fig:W0-kirby}
\end{figure}

\begin{proposition}\label[proposition]{prop:W0}
The filling $W_0$ is diffeomorphic to $DT^*K\cup h$, where the $2$--handle
$h$ is attached along a fiber $F$ of the circle bundle $ST^*K\to K$ with
framing $-2$ relative to the framing of $F$ induced by the bundle (see
\cref{fig:W0-kirby}).  Consequently $Y$ is orientation-preservingly
diffeomorphic to the $3$--manifold obtained from $ST^*K$ by Dehn surgery along
a regular fiber, with coefficient $-2$ relative to the bundle framing, and
$H_2(W_0;\mathbb Z)\cong\mathbb Z$ is generated by an embedded sphere of
self-intersection $-2$, the union of the core of $h$ and a fiber disk of
$DT^*K$.
\end{proposition}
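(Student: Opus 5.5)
The plan is to run, on the Kirby diagram of \cref{fig:W0-lefschetz}, the same cancellation scheme as in the proof of \cref{thm:klein-main}. The point is that $W_0=X(\Sigma_0^6;d,d^*,v',w',z')$, where $(d,v',w',z')=\sigma_1(u,v,w,z)$ up to the isotopy $u'=d$. Since $\sigma_1$ is a diffeomorphism of the page permuting the holes, the sub-Lefschetz fibration $X(\Sigma_0^6;d^*,v',w',z')$ is diffeomorphic to $X_B$, and hence to $DT^*K$ by \cref{thm:klein-main}. Concretely, I would cancel as follows.
\begin{itemize}
\item $v'$ and $w'$ are the only curves enclosing $\delta_5$. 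Slide $w'$ over $v'$ and cancel $v'$ with that $1$--handle.
\item The resulting $w'$ and $z'$ are then the only handles over $\delta_4$. Slide and cancel $z'$.
\item $d^*$ and the new $w'$ pass over $\delta_3$, and so does $d$. Slide $w'$ and $d$ over $d^*$ and cancel $d^*$.
\end{itemize}
The framing of $w'$ goes $-1\to-2\to-3\to-4$, exactly as before. After an isotopy, what remains is \cref{fig:klein-kirby}, with the $1$--handles now dual to $\delta_1,\delta_2$, together with the image of the handle $d$.

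The key step is to track $h_d$ through these moves. Originally $d$ runs over $\delta_1$ and $\delta_3$ once each, with framing $-1$. After sliding over $d^*$ (a parallel copy on an adjacent page, framing $-1$, with linking between the two pushoffs dictated by the page framing), $d-d^*$ becomes a curve running over no $1$--handle. Its framing is $-1+(-1)-2\,\mathrm{lk}$, which should be $-2$ relative to the appropriate framing. Geometrically this is the band sum of a curve with its page pushoff, which is a small meridian of $d^*$. Since $d^*$ is then absorbed into the cancellation with $\delta_3$, its meridian becomes, after the remaining slides, a meridian of the surviving $2$--handle $w'''$. I would verify this by observing that sliding $d$ over $d^*$ in the reverse direction yields a small unknot $\mu$ linking $d^*$ once and no other handle. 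When $d^*$ is cancelled against the $\delta_3$ $1$--handle, and $w''$ is first slid over $d^*$, the circle $\mu$ gets carried to a meridian of $w'''$ (linking it once), unlinked from everything else. The framing of $\mu$ is computed from the slide formula as $-1-1+2=0$ relative to the page pushoff convention, and then adjusted by the linking with the $-1$ handle to give $-2$ in blackboard terms. I expect this framing bookkeeping to be the main obstacle. I would double-check it by a homological computation: the intersection form on $H_2(W_0)\cong\Z$ must be $(-2)$, via the sphere described below. Alternatively, the boundary must have $H_1(Y)$ matching \cref{prop:baykur-surgery}: $(0,-8)$-surgery on $7^2_4$ with linking number $2$ gives $H_1\cong\Z\oplus\Z/2$. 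The diagram in \cref{fig:W0-kirby} gives the presentation $\langle a,b,\mu\rangle$ with relations $2a+2b$ from the $-4$ handle and $-2\mu$ together with $\mu$-linking terms, which yields the same group only for framing $-2$.

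With the diagram of \cref{fig:W0-kirby} in hand, the identification is direct. By the discussion preceding the proposition, a meridian of the $2$--handle of $DT^*K$ is a fiber $F$ of $ST^*K\to K$, and its diagram framing $0$ is the bundle framing. Hence $h$ is attached along $F$ with framing $-2$ relative to the bundle framing. Restricting to boundaries shows that $Y=\partial W_0$ is $-2$ surgery on a regular fiber of $ST^*K$, orientation-preservingly, since the diffeomorphism $X_B\cong DT^*K$ was chosen orientation preserving in \cref{thm:klein-main}. Finally, $F$ bounds a fiber disk $\{q\}\times D^2$ in $DT^*K$. Gluing this disk to the core of $h$ gives an embedded sphere $S$, and its self-intersection equals the framing relative to the disk framing, namely $-2$. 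Since $H_2(DT^*K)=0$ (by Euler characteristic $0$ and $b_1=1$, with $H_2$ torsion-free), the long exact sequence shows that $H_2(W_0)\cong\Z$ is generated by $S$, as $h$ is attached along a null-homologous curve with $F$ primitive-dual to the cocore of $h$.
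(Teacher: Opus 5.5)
Your overall route is the paper's. You run the $\sigma_1$--conjugate of the reduction of $X_B$, slide one copy of $d$ over the other to produce a meridian, and carry that meridian to a meridian of the surviving $(-4)$--framed handle. Cancelling $h_{d^*}$ against $\delta_3$ instead of $h_d$ against $\delta_1$ is immaterial.

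\textbf{The gap is the framing of $\mu$.} You flag this as the main obstacle, and it is the entire quantitative content of the statement, but you do not establish it. You give two incompatible computations. The second one (``$-1-1+2=0$'', then ``adjusted by the linking with the $-1$ handle'') is wrong, since a framing coefficient is never corrected by linking with other components. What is needed is this: $h_d$ and $h_{d^*}$ are attached along parallel copies of the planar curve $d$, so $\mathrm{lk}(d,d^*)$ equals the page framing of $d$, which is $0$. The slide formula then gives $-1+(-1)-2\cdot 0=-2$. This is already measured against the Seifert framing of the unknot $\mu$, i.e.\ the framing from a disk meeting the attaching circle once, which is the bundle framing.

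Your geometric reason for obtaining a meridian is also off. The band sum of $d$ with a reversed \emph{page} push-off of $d^*$ bounds a disk in $\partial N(d^*)$ and is unlinked from $d^*$. You get a meridian because the slide uses the $(-1)$--framed push-off, which differs from the page push-off by one full twist.

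\textbf{Your cross-checks do not repair this.} The components of $7^2_4$ have linking number $0$, since the knot $J'$ in the proof of \cref{prop:baykur-surgery} is null-homologous in $S^1\times D^3$. So $H_1(Y;\Z)\cong\Z\oplus\Z/8$, not $\Z\oplus\Z/2$; a linking number $2$ would even make $H_1(Y)$ finite. With framing $k\neq0$ on the meridian, the boundary of the diagram of \cref{fig:W0-kirby} has $H_1\cong\Z\oplus\Z/4\lvert k\rvert$. So $H_1(Y)$ detects only $\lvert k\rvert=2$, not the sign. Using the self-intersection of the sphere is circular. The sign comes from the same slide computation, equivalently from $(h_d-h_{d^*})^2=-1-1-2\,\mathrm{lk}(d,d^*)=-2$, where $h_d-h_{d^*}$ generates the kernel of the boundary map from $2$--chains to $1$--chains.

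\textbf{A missing move before the last cancellation.} Before cancelling $h_{d^*}$ against the $1$--handle dual to $\delta_3$, you must free $\mu$, which still encircles $h_{d^*}$. As the paper does for $\delta_1$, isotope $\mu$ across the belt sphere of that $1$--handle; afterwards it encircles only a strand of $w'''$.

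The remaining steps are fine: identifying $\mu$ with a fiber, the orientation argument, and $H_2(W_0)\cong\Z$ generated by the $(-2)$--sphere.
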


\begin{proof}
Note that \Cref{fig:W0-lefschetz} is the Kirby diagram of the Lefschetz fibration
$W_0$ given by the factorization $\phi=t_d\,t_d\,t_{v'}t_{w'}t_{z'}$: five
$1$--handles dual to the inner holes $\delta_1,\ldots,\delta_5$ and five
$2$--handles $h_d,h_d^*,h_{v'},h_{w'},h_{z'}$ attached along the vanishing
cycles with framing $-1$ relative to the page.  Its boundary is $Y$.

We reduce this diagram by Kirby calculus as in the proof of
\cref{thm:klein-main}, conjugated by $\sigma_1$, with one additional
slide, except that in the last cancellation we use the $1$--handle dual to
$\delta_1$ instead of $\delta_3$.  The curves $v',w'$ are the only ones enclosing $\delta_5$: slide
$h_{w'}$ over $h_{v'}$ to obtain $h_{w'}'$, which no longer runs over the
$1$--handle dual to $\delta_5$, and cancel $h_{v'}$ against that $1$--handle.
Now $h_{w'}'$ and $h_{z'}$ are the only $2$--handles running over the
$1$--handle dual to $\delta_4$: slide $h_{w'}'$ over $h_{z'}$ to obtain
$h_{w'}''$ and cancel $h_{z'}$ against that $1$--handle.

The $2$--handles $h_d$ and $h_d^*$ are attached along parallel copies of $d$
on consecutive pages, so their linking number is the page framing of $d$,
which is $0$.  Slide $h_d^*$ over $h_d$, subtracting, along a short band
joining the two parallel copies.  Since the framing push-off of $d$ differs
from the page push-off by one full twist around $d$, the resulting $2$--handle
is attached, after an isotopy, along a meridian $\mu$ of the attaching circle
of $h_d$.  It runs over no $1$--handle, and its framing is $-1-1-2\cdot0=-2$.

Finally $h_{w'}''$ and $h_d$ are the only $2$--handles running over the
$1$--handle dual to $\delta_1$: slide $h_{w'}''$ over $h_d$ to obtain
$h_{w'}'''$, which then also passes once through $\mu$, and cancel $h_d$
against that $1$--handle.  Before cancelling, isotope $\mu$ off the attaching
circle of $h_d$ across the belt sphere of this $1$--handle, so that after the
cancellation $\mu$ is a meridian of the attaching circle of $h_{w'}'''$.  What
remains is $\natural^2(S^1\times D^3)$, with $1$--handles dual to
$\delta_2,\delta_3$, together with two $2$--handles: $h_{w'}'''$ of framing
$-4$, following the three slides ($-1\to-2\to-3\to-4$), which runs twice in the
same direction over each surviving $1$--handle, and the $2$--handle along its
meridian $\mu$ of framing $-2$.  An isotopy carries the resulting diagram to
\cref{fig:W0-kirby}. The sequence of handle moves, drawn by hand, is
included as an ancillary file with the arXiv version of this paper. 

By the discussion preceding the proposition, $\mu$ is a fiber $F$ of
$ST^*K\to K$, and its framing $-2$ is measured relative to the bundle framing.
This proves the first statement, and the second follows since Kirby moves
preserve orientation and $\partial W_0=Y$.  Finally, the core of $h$ together
with the fiber disk bounded by $F$ is an embedded sphere of
self-intersection $-2$, which generates $H_2(W_0;\mathbb Z)\cong\mathbb Z$,
since $W_0$ is obtained from $DT^*K\simeq K$ by attaching a single $2$--handle
along a null-homologous circle.
\end{proof}

\begin{proof}[Proof of \cref{prop:baykur-seifert}]
The first statement is part of \cref{prop:W0}.  By \cref{thm:klein-main}, $ST^*K$ is the orientable circle bundle over $K$
with Euler number zero.  In other words, it is Seifert fibered over $K$ with no
exceptional fibers and Seifert Euler parameter $b=0$.  By \cref{prop:W0}, $Y$
is obtained from $ST^*K$ by removing a fibered solid torus neighborhood $N$ of
a regular fiber $F$ and regluing a solid torus whose meridian is the curve
$-2\mu_F+\lambda_F$ on $\partial N$.  Here $\mu_F$ is a meridian of $F$ and
$\lambda_F$, the bundle framing, is a regular fiber on $\partial N$.  The
algebraic intersection number of the new meridian with $\lambda_F$ is $\pm2$.
Hence the Seifert fibration of $ST^*K\setminus N$ extends over the new solid
torus, whose core is an exceptional fiber of multiplicity $2$.  We conclude
that $Y$ is Seifert fibered over $K$ with exactly one exceptional fiber, of
multiplicity $2$.  Its normalized invariants are $(b;(2,1))$ with
$b\in\{0,-1\}$, depending on whether the unnormalized invariant of the new
fiber is $(2,1)$ or $(2,-1)$ in the orientation conventions of
\cite[Proposition~4.3]{Wilkes}.  Reversing the orientation replaces
$(b;(2,1))$ by $(-b-1;(2,1))$.  Therefore, in either case, $Y$ is
homeomorphic, as an unoriented manifold, to the Seifert fibered space over the
Klein bottle with one exceptional fiber of type $(2,1)$ and $b=0$.
\end{proof}

\subsection{The homology of all Stein fillings}\label{subsec:allfillings}

The fillings $W_m$ of \cref{thm:newfillings} have first homology $\Z$ for $m$
odd and $\Z\oplus\Z/2$ for $m$ even.  We show that these are the only
possibilities for any Stein filling of $(Y,\xi)$.

\begin{proposition}\label[proposition]{prop:allfillings}
Every Stein filling $X$ of $(Y,\xi)$ has Euler characteristic $1$, first
homology isomorphic to $\Z$ or to $\Z\oplus\Z/2$, $H_2(X;\Z)\cong\Z$, and
$H_k(X;\Z)=0$ for $k\ge3$.
\end{proposition}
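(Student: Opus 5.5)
The plan is to reduce the statement to a lattice computation in the first homology of the page, using Wendl's theorem. I then control the torsion with the boundary $Y$. By \cite{Wendl}, after a trivial cobordism every Stein filling $X$ of the planar contact manifold $(Y,\xi)$ is an allowable Lefschetz fibration over $D^2$ with fiber $\Sigma=\Sigma_0^6$ and monodromy $\phi$. So it is $X(\Sigma;c_1,\ldots,c_N)$ for some positive factorization $\phi=t_{c_1}\cdots t_{c_N}$. In particular $X$ has handles of index $0,1,2$ only. Hence $H_k(X;\Z)=0$ for $k\ge3$, $H_2(X;\Z)$ is free, and
\[
\chi(X)=\chi(\Sigma)+N=N-4,\qquad H_1(X;\Z)\cong \Z^5/L,
\]
where $L$ is spanned by the vectors $v_{S_i}=\sum_{k\in S_i}e_k$. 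Here $S_i\subset\{1,\dots,5\}$ is the set of inner holes enclosed by the separating curve $c_i$. So everything reduces to controlling $N$ and the lattice $L$ over all positive factorizations of $\phi$.

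The first step is to use the standard invariants of positive factorizations on planar surfaces. For a product $t_{c_1}\cdots t_{c_N}$ in $\Mod(\Sigma_0^6,\partial)$, consider the numbers $n_k$ of factors enclosing $\delta_k$, and $n_{kl}$ of factors enclosing both $\delta_k$ and $\delta_l$, counted symmetrically over all six boundary components including $\delta_0$. These depend only on the product. They are read off from the image of the mapping class in the abelianized pure braid group, equivalently from the linking numbers of pairs of boundary arcs under the monodromy. As a check, both sides of the lantern relation give the same counts. Applying this to $\phi=t_dt_ut_vt_wt_z$ fixes all $n_k$ and $n_{kl}$. I will then show that these counts determine $N$: each curve separates the six boundary components into two sets of size at least two, and a counting identity over pairs should pin down $N=5$. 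This gives $\chi(X)=1$.

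The counts also fix the Gram matrix $M=\sum_i v_{S_i}v_{S_i}^{T}$. Over $\mathbb Q$ its rank equals $\operatorname{rank}L$, and for Baykur's factorization it is $4$. Hence $b_1(X)=1$ and $b_2(X)=\chi(X)-1+b_1(X)=1$, so $H_2(X;\Z)\cong\Z$.

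The remaining, and hardest, step is the torsion of $H_1(X)$. The filling $X$ is built from $Y\times[0,1]$ by handles of index $\ge2$, so $H_1(Y)\to H_1(X)$ is surjective. From \cref{prop:baykur-surgery} and $b_1(Y)=1$, the linking number of the link is $0$, so $H_1(Y;\Z)\cong\Z\oplus\Z/8$. Thus $H_1(X)\cong\Z\oplus\Z/2^j$ with $j\le3$, and I must exclude $j=2,3$.

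To do so I will use the exact sequence $H^2(X)\to H_1(Y)\to H_1(X)\to0$, with $H^2(X)\cong\Z\oplus\operatorname{Tors}H_1(X)$. The image of $H^2(X)$ contains the class of the meridian dual to the generator of $H_2(X)\cong\Z$. Because $b_2=1$ and the intersection form is negative semidefinite, it is also forced to be degenerate, since $b_1(Y)=1$. Comparing orders in $\Z/8$ should then force $\lvert\operatorname{Tors}H_1(X)\rvert^2$ to divide $8$, leaving only $\Z$ or $\Z\oplus\Z/2$.

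If this duality count turns out to be too weak, I will fall back on the lattice directly. Reducing $M$ modulo $4$ together with the known $n_k$, I will show the $2$-primary part of $\Z^5/L$ has exponent at most $2$. This is the step I expect to be the main obstacle.
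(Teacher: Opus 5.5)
Your Wendl reduction, the invariants $m_k,n_{kl}$, and the observation that $\operatorname{rank}_{\mathbb Q}L=\operatorname{rank}M$ is determined by $\phi$ are sound. That last point gives $b_1(X)=1$ for every filling with no enumeration, which is a genuinely different and shorter route than the paper's. The step $N=5$, however, has a genuine gap.

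\textbf{The count of vanishing cycles.} Wendl's Lefschetz fibrations are only \emph{allowable}, so a vanishing cycle may be parallel to a boundary component. Your premise that every factor splits the six boundary components into two sets of size at least two is therefore unjustified, and without it the counting identities do not determine $N$. Your own lantern check shows this: $\{1,2\},\{2,3\},\{1,3\}$ and $\{1,2,3\},\{1\},\{2\},\{3\}$ have the same $m_k$ and $n_{kl}$, but $N=3$ and $N=4$.

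In the present case, the size profiles $(1,1,1,3,3,3)$ and $(1,1,2,2,2,4)$ satisfy both $\sum_k s_k=12$ and $\sum_k\binom{s_k}{2}=9$ (equivalently your symmetric count $\sum_k s_k(6-s_k)=42$), yet have $N=6$. To conclude you need the specific values:
\begin{itemize}
\item[(i)] $n_{12}=n_{34}=0$ forbids any set containing $\{1,2\}$ or $\{3,4\}$, hence any set with at least four elements;
\item[(ii)] $m_5=2$ excludes three $3$--element sets, since each of them would have to contain $5$.
\end{itemize}
Only then do you get no boundary-parallel factors and $N=5$, hence $\chi(X)=1$ and $H_2(X;\Z)\cong\Z$. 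This is precisely the content of \cref{lem:enum}, which the paper then uses to compute $H_1$ directly in the two surviving cases.

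\textbf{The torsion step.} Your claim that the intersection form is degenerate is false. Taken literally, it would make the image of $H^2(X)$ in $H_1(Y)$ infinite, contradicting $b_1(X)=1$. From the exact sequence $0=H_3(X)\to H_3(X,Y)\to H_2(Y)\to H_2(X)$, the radical of $Q_X$ has rank $b_1(Y)-b_1(X)=0$, so $Q_X=(-k)$ with $k>0$. Write $T=\operatorname{Tors}H_1(X)$. The cokernel $C$ of $H_2(X)\to H^2(X)\cong\Z\oplus T$ has order $k\lvert T\rvert$. It injects into $\operatorname{Tors}H_1(Y)\cong\Z/8$, and $T\cong(\Z/8)/C$. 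Hence $k\lvert T\rvert^2=8$, which forces either $T=0$ with $k=8$, or $T\cong\Z/2$ with $k=2$. This matches the $-8$--framed handle in \cref{prop:baykur-surgery} and the $(-2)$--sphere of \cref{prop:W0}, and makes your mod~$4$ fallback unnecessary.

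With these two repairs your route is a valid alternative. It obtains $b_1$ and the torsion from invariants of $\phi$ and of $Y$, without listing the hole sets. The paper's enumeration is more elementary and also identifies the two homological types explicitly.
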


By Wendl's theorem \cite{Wendl}, every Stein filling $X$ of the planar contact
$3$--manifold $(Y,\xi)$ is, up to deformation, an allowable Lefschetz fibration
over the disk with fiber $\Sigma=\Sigma_0^6$ and monodromy $\phi$.  In other
words, it arises from a positive factorization $\phi=t_{c_1}\cdots t_{c_N}$
into right-handed Dehn twists along homologically nontrivial simple closed
curves.  Every simple closed curve $c$ on the planar surface $\Sigma$ separates
it, and we write $S_c\subset\{1,\ldots,5\}$ for the set of indices of the holes
$\delta_1,\ldots,\delta_5$ which $c$ separates from the outer boundary
$\delta_0$.  The homology class of $c$ is then $\sum_{i\in S_c}g_i$, where
$g_i\in H_1(\Sigma;\Z)\cong\Z^5$ is the class of a loop around $\delta_i$, and
$S_c$ is nonempty since $c$ is homologically nontrivial.  Hence
\begin{equation}\label{eq:H1all}
H_1(X;\Z)\cong H_1(\Sigma;\Z)/\langle[c_1],\ldots,[c_N]\rangle ,
\end{equation}
the abelian group generated by $g_1,\ldots,g_5$ subject to the relations
$\sum_{i\in S_{c_k}}g_i=0$, $k=1,\ldots,N$.

The sets $S_{c_1},\ldots,S_{c_N}$ are constrained by the following invariants of
$\phi$.  For a nonempty set $I\subset\{1,\ldots,5\}$, capping the holes
$\delta_j$ with $j\notin I$ by disks induces a homomorphism from the mapping
class group of $\Sigma$ to the mapping class group of a disk with $\lvert
I\rvert$ holes, which maps $t_c$ to the Dehn twist along the image of $c$.  This
image bounds a disk if $c$ encloses no hole of $I$, it is parallel to the
boundary component $\delta_i$ if $c$ encloses exactly one hole $\delta_i$ with
$i\in I$, and it is parallel to the outer boundary if $c$ encloses all the holes
of $I$.  For $I=\{i\}$ the capped surface is an annulus, whose mapping class
group is infinite cyclic, generated by the twist along its core.  Hence the
number
\[
m_i=\#\{k : i\in S_{c_k}\}
\]
is determined by $\phi$.  For $I=\{i,j\}$ the capped surface is a pair of pants,
whose mapping class group is free abelian of rank three, generated by the twists
along its three boundary components, and the image of $t_c$ is the twist along
the outer boundary if and only if $c$ encloses both $\delta_i$ and $\delta_j$.
Hence the number
\[
n_{ij}=\#\{k : i,j\in S_{c_k}\}
\]
is determined by $\phi$ as well.  We evaluate these numbers on Baykur's
factorization $\phi=t_dt_ut_vt_wt_z$, for which $S_d=\{1,3\}$, $S_u=\{2,3\}$,
$S_v=\{2,4,5\}$, $S_w=\{1,3,5\}$ and $S_z=\{1,4\}$.  This gives
$(m_1,\ldots,m_5)=(3,2,3,2,2)$, and $n_{12}=n_{34}=0$, $n_{13}=2$, and
$n_{ij}=1$ for the remaining pairs $\{i,j\}$.

\begin{lemma}\label[lemma]{lem:enum}
Let $S_1,\ldots,S_N$ be nonempty subsets of $\{1,\ldots,5\}$ with these numbers
$m_i$ and $n_{ij}$.  Then $N=5$, and up to order the sets $S_1,\ldots,S_5$ are
either
\[
\{1,3\},\ \{1,4\},\ \{2,3\},\ \{1,3,5\},\ \{2,4,5\}\qquad\text{or}\qquad
\{1,3\},\ \{1,3\},\ \{2,4\},\ \{1,4,5\},\ \{2,3,5\}.
\]
\end{lemma}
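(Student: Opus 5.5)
The plan is to reduce everything to two counting identities and then use the vanishing of $n_{12}$ and $n_{34}$ to bound the sizes of the sets. Double counting gives
\[
\sum_k |S_k|=\sum_i m_i=12,\qquad \sum_k \binom{|S_k|}{2}=\sum_{i<j} n_{ij}.
\]
From the values listed before the lemma, the second sum is $2+7\cdot 1=9$, since $n_{13}=2$, $n_{12}=n_{34}=0$, and the other seven $n_{ij}$ equal $1$. Because $n_{12}=0$, no $S_k$ contains both $1$ and $2$. Because $n_{34}=0$, no $S_k$ contains both $3$ and $4$. Hence every $S_k$ has at most one element of $\{1,2\}$, at most one of $\{3,4\}$, and possibly $5$, so $|S_k|\le 3$.

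Let $s_j$ be the number of sets of size $j$. The identities become
\[
s_1+2s_2+3s_3=12,\qquad s_2+3s_3=9,
\]
so $s_1+s_2=3$. Together with $s_1,s_2\ge 0$ this leaves exactly two cases: $(s_1,s_2,s_3)=(0,3,2)$, with $N=5$, or $(3,0,3)$, with $N=6$. Every three-element set must contain $5$, because it takes one element from each of $\{1,2\}$ and $\{3,4\}$ and its third element can only be $5$. Since $m_5=2$, there are at most two triples, which rules out $s_3=3$. Hence $N=5$, there are exactly two triples and three pairs, and no singletons. The two triples already use up $m_5=2$, so none of the three pairs contains $5$.

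Next I would determine the triples and then the pairs. Write the triples as $\{a,b,5\}$ and $\{a',b',5\}$ with $a,a'\in\{1,2\}$ and $b,b'\in\{3,4\}$. Since $n_{15}=n_{25}=1$, we get $\{a,a'\}=\{1,2\}$. Since $n_{35}=n_{45}=1$, we get $\{b,b'\}=\{3,4\}$. So the triples are either $\{1,3,5\},\{2,4,5\}$ or $\{1,4,5\},\{2,3,5\}$. Each pair avoids $5$ and meets each of $\{1,2\}$ and $\{3,4\}$ at most once, so it has the form $\{a,b\}$ with $a\in\{1,2\}$, $b\in\{3,4\}$. The pairs are therefore determined by the residual values of $n_{13},n_{14},n_{23},n_{24}$ after subtracting the contributions of the triples.

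In the first case the residuals are $(1,1,1,0)$, which gives the pairs $\{1,3\},\{1,4\},\{2,3\}$. In the second case they are $(2,0,0,1)$, which gives $\{1,3\},\{1,3\},\{2,4\}$. A final check that the $m_i$ come out to $(3,2,3,2,2)$ in both cases is automatic once the $n_{ij}$ and sizes match, but it is quick to confirm directly. I do not expect a real obstacle here; the only step needing care is the size bound $|S_k|\le 3$ from $n_{12}=n_{34}=0$, since it is what makes the two linear equations decisive and excludes singletons.
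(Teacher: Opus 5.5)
Your proof is correct and is essentially the paper's argument. It uses the same two double-counting identities, the same bound $\lvert S_k\rvert\le 3$ from $n_{12}=n_{34}=0$, and the same exclusion of three triples via $m_5=2$; your step from $s_1+s_2=3$ to exactly two cases tacitly uses $3\mid s_2$, which follows from $s_2=9-3s_3$. The only difference is the final bookkeeping: you fix the two triples through $n_{15},n_{25},n_{35},n_{45}$ and read the pairs off the residual values of $n_{13},n_{14},n_{23},n_{24}$, whereas the paper first determines the two sets containing $4$ and then the three sets containing $3$.
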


\begin{proof}
Let $s_k=\lvert S_k\rvert$.  Counting pairs $(i,k)$ with $i\in S_k$, and triples
$(i,j,k)$ with $i<j$ and $i,j\in S_k$, gives $\sum_ks_k=\sum_im_i=12$ and
$\sum_k\binom{s_k}{2}=\sum_{i<j}n_{ij}=9$.  Since $n_{12}=n_{34}=0$, no $S_k$
contains $\{1,2\}$ or $\{3,4\}$.  In particular $s_k\le3$, since a subset of
$\{1,\ldots,5\}$ with at least four elements omits at most one element, and
therefore contains $\{1,2\}$ or $\{3,4\}$.  Let $a_s$ be the number of indices
$k$ with $s_k=s$.  Then $a_1+2a_2+3a_3=12$ and $a_2+3a_3=9$, hence $a_2=9-3a_3$
and $a_1=3a_3-6$, and nonnegativity forces $a_3\in\{2,3\}$.  If $a_3=3$, then
$a_1=3$ and $a_2=0$, and each of the three sets with three elements avoids
$\{1,2\}$ and $\{3,4\}$, so it has the form $\{i,j,5\}$ with $i\in\{1,2\}$ and
$j\in\{3,4\}$.  Then all three contain $5$, which contradicts $m_5=2$.  Hence
$a_3=2$, $a_2=3$ and $a_1=0$, that is, $N=5$, and there are three sets with two
elements and two sets with three elements.

Since $m_4=2$, exactly two of the sets contain $4$.  Neither of them contains
$3$, and they do not both contain $1$ and $2$.  Since $n_{14}=n_{24}=n_{45}=1$,
together they contain $1$ once, $2$ once and $5$ once.  The pair $\{4,5\}$ and
$\{1,2,4\}$ is excluded by $n_{12}=0$, so these two sets are either $\{1,4\}$
and $\{2,4,5\}$, or $\{2,4\}$ and $\{1,4,5\}$.  In the first case the remaining
three sets all contain $3$, since $m_3=3$, and they consist of two sets with two
elements and one set with three elements.  They have to realize the remaining
numbers $m_1=2$, $m_2=1$, $m_5=1$, $n_{15}=1$ and $n_{25}=0$.  Hence the set with
three elements contains $1$ and $5$, so it is $\{1,3,5\}$, and the other two are
$\{1,3\}$ and $\{2,3\}$.  In the second case the remaining numbers are $m_1=2$,
$m_2=1$, $m_5=1$, $n_{15}=0$ and $n_{25}=1$, and they force the sets
$\{2,3,5\}$, $\{1,3\}$ and $\{1,3\}$.  Both lists realize all the numbers $m_i$
and $n_{ij}$.
\end{proof}

\begin{proof}[Proof of \cref{prop:allfillings}]
By \cref{lem:enum}, $X$ has a handle decomposition with one $0$--handle, five
$1$--handles and five $2$--handles, so $\chi(X)=1$.  We compute $H_1(X;\Z)$ from
\eqref{eq:H1all} in the two cases of \cref{lem:enum}.  In the first case the
relations are
\[
g_1+g_3=0,\quad g_1+g_4=0,\quad g_2+g_3=0,\quad g_1+g_3+g_5=0,\quad g_2+g_4+g_5=0 .
\]
The first and the fourth give $g_5=0$, and then $g_3=-g_1$, $g_4=-g_1$ and
$g_2=-g_3=g_1$, after which the last relation holds automatically.  Hence
$H_1(X;\Z)\cong\Z$, generated by $g_1$.  In the second case the relations are
\[
g_1+g_3=0,\quad g_2+g_4=0,\quad g_1+g_4+g_5=0,\quad g_2+g_3+g_5=0 ,
\]
the first one appearing twice.  They give $g_3=-g_1$, $g_4=-g_2$ and
$g_5=g_2-g_1$, and the last relation becomes $2g_2-2g_1=0$.  Hence $H_1(X;\Z)$
is generated by $g_1$ and $g_2$ with the single relation $2(g_2-g_1)=0$, so it
is isomorphic to $\Z\oplus\Z/2$, with $\Z$ generated by $g_1$ and $\Z/2$
generated by $g_2-g_1$.  In both cases $b_1(X)=1$.  Since $X$ has no handles of
index three or four, $H_k(X;\Z)=0$ for $k\ge3$, and $H_2(X;\Z)$ is free abelian
of rank $b_2(X)=\chi(X)-1+b_1(X)=1$.
\end{proof}

By \cref{thm:newfillings}, both cases of \cref{prop:allfillings} occur, for
infinitely many pairwise non-homeomorphic fillings each.

\section{Rational homology ball Stein fillings}\label{sec:vary}

Baykur's Stein fillings $X_n$ of $(Y,\xi)$ come from the positive factorizations
$t_d\,t_\alpha^nBt_\alpha^{-n}$ of its monodromy $t_dB$.  In this section we keep
$B$ and its conjugates $t_\alpha^nBt_\alpha^{-n}$, but replace $d$ by one of the
curves $d_m=t_u^m(c_{12})$ and $d'_m=t_u^m(c_{345})$, $m\ge1$, of the
introduction, where $c_{12}$ and $c_{345}$ are the round curves enclosing the
holes $\delta_1,\delta_2$ and $\delta_3,\delta_4,\delta_5$, and $u$ is the round
curve enclosing $\delta_2,\delta_3$.  The simplest examples $d_1$ and
$d'_1$ are shown in \cref{fig:di}.  Each curve gives a new planar
contact $3$--manifold with Stein fillings $X_{m,n}$ and $X'_{m,n}$, $n\ge0$.  We compute the fundamental groups of these fillings as
in \cref{subsec:pi1new}, which proves \cref{thm:ball}, and we describe the
filling $X_{1,0}$ by explicit Kirby diagrams.

We use the notation of \cref{subsec:pi1new}: $x_1,\ldots,x_5$ are the free generators
of $\pi_1(\Sigma,\ast)$, $Q=F_5/\ncl{u,v,w,z}\cong\pi_1(K)$, and
$x\mapsto\overline{x}$ denotes the quotient map $F_5\to Q$, as in \cref{lem:pi1}.  By \eqref{eq:subst} and
\eqref{eq:Q},
\begin{equation}\label{eq:qmap}
\overline{x_1}=rb,\quad \overline{x_2}=b,\quad \overline{x_3}=b^{-1},\quad \overline{x_4}=r^{-1}b^{-1},\quad
\overline{x_5}=r,
\end{equation}
in $Q=\langle r,b\mid brb^{-1}=r^{-1}\rangle$.  Every element of $Q$ can be
written uniquely as $r^ib^j$ with $i,j\in\Z$, and since $b^jr^k=r^{(-1)^jk}b^j$,
which follows from $brb^{-1}=r^{-1}$, the multiplication in these normal forms is
\begin{equation}\label{eq:moveR}
(r^ib^j)(r^kb^l)=r^{\,i+(-1)^jk}\,b^{\,j+l}.
\end{equation}
In particular $b^2$ is central in $Q$.  We use \eqref{eq:moveR} without further
comment in the computations below.

\subsection{The construction}\label{subsec:general}

Let $c$ be Baykur's curve $d$ or one of the curves $d_m$, $d'_m$.  Since
$t_\alpha$ commutes with $B$ \cite[(8)--(9)]{BaykurPlanar}, for every $n\ge0$,
\[
t_cB=t_c\,t_\alpha^nBt_\alpha^{-n}
=t_c\,t_{t_\alpha^n(u)}\,t_{t_\alpha^n(v)}\,t_{t_\alpha^n(w)}\,t_{t_\alpha^n(z)}
\]
is a positive factorization.  We denote by $(Y_c,\xi_c)$ the contact
$3$--manifold supported by the open book $(\Sigma,t_cB)$, and by
\[
X_n(c)=X(\Sigma;\,c,\,t_\alpha^n(u),\,t_\alpha^n(v),\,t_\alpha^n(w),\,t_\alpha^n(z))
\]
the corresponding Stein fillings.  Thus $(Y_d,\xi_d)=(Y,\xi)$ and $X_n(d)=X_n$.
As in \cref{lem:pi1},
\[
\pi_1(X_n(c))\cong F_5/\ncl{c,\,t_\alpha^n(u),\,t_\alpha^n(v),\,t_\alpha^n(w),\,t_\alpha^n(z)}
\cong Q/\ncl{\overline{t_\alpha^{-n}(c)}},
\]
where the second isomorphism is induced by the automorphism $t_\alpha^{-n}$ of
$F_5$.

\begin{lemma}\label[lemma]{lem:theta}
For every $n\ge0$, the homomorphism $F_5\to Q$, $x\mapsto\overline{t_\alpha^{-n}(x)}$, is given by
\[
x_1\mapsto r^{2n+1}b,\quad x_2\mapsto r^{2n}b,\quad x_3\mapsto b^{-1},\quad
x_4\mapsto r^{-1}b^{-1},\quad x_5\mapsto r .
\]
\end{lemma}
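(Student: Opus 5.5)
The map in question is the composite of the automorphism $t_\alpha^{-n}$ of $F_5$ with the quotient $F_5\to Q$, so it is a homomorphism. It is therefore determined by the images of the free generators, and it suffices to compute $\overline{t_\alpha^{-n}(x_i)}$ for $i=1,\dots,5$.

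First I recall from the proof of \cref{lem:images} how $t_\alpha^{-n}$ acts on $F_5$. The right-handed twist $t_\alpha$ conjugates $x_1,x_2$ by the boundary word $\alpha=x_2x_1$ of the disk containing $\delta_1,\delta_2$. It fixes $x_3,x_4,x_5$, and it fixes $\alpha$ itself. By induction,
\[
t_\alpha^{-n}(x_i)=\alpha^{-n}x_i\alpha^{n}\quad(i=1,2),\qquad t_\alpha^{-n}(x_j)=x_j\quad(j=3,4,5).
\]
Hence the images of $x_3,x_4,x_5$ are read off directly from \eqref{eq:qmap}: they are $b^{-1}$, $r^{-1}b^{-1}$ and $r$.

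For $x_1$ and $x_2$ I use \eqref{eq:Q}, which gives $\overline{\alpha}=r^{-1}b^2$ with $b^2$ central. Then conjugation by $\overline{\alpha}^{\,-n}=r^{n}b^{-2n}$ equals conjugation by $r^n$, that is, $y\mapsto r^nyr^{-n}$. Applying this to $\overline{x_2}=b$ and using \eqref{eq:moveR}, we get $r^nbr^{-n}=r^{n}r^{n}b=r^{2n}b$. Likewise $\overline{x_1}=rb$ gives $r^n(rb)r^{-n}=r^{2n+1}b$.

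The only point needing care is the direction of conjugation, since a sign slip would give $r^{-2n}b$ instead. I will check this against \cref{lem:images}: there the same formula gave $b^{-1}r^nbr^{-n}=r^{-2n}$ for $x_3\,t_\alpha^{-n}(x_2)$. With the present formula the same element is $b^{-1}\cdot r^{2n}b=r^{-2n}$, which agrees. As a second check, at $n=0$ the formula reduces to \eqref{eq:qmap}.
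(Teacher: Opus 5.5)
Your proof is correct and follows the paper's argument essentially step by step. In both, $t_\alpha^{-n}$ conjugates $x_1,x_2$ by $\alpha^{-n}$ and fixes $x_3,x_4,x_5$; since $\overline{\alpha}=r^{-1}b^2$ with $b^2$ central, conjugation by $\overline{\alpha}^{\,-n}$ reduces to conjugation by $r^n$, which sends $r^ib$ to $r^{i+2n}b$ (your cross-check against \cref{lem:images} is not independent, since that lemma uses the same conjugation convention from Baykur).
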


\begin{proof}
The Dehn twist $t_\alpha$ along the round curve $\alpha=x_2x_1$ conjugates $x_1$
and $x_2$ by $\alpha$, fixes $x_3,x_4,x_5$, and fixes $\alpha$.  Hence
$t_\alpha^{-n}(x_i)=\alpha^{-n}x_i\alpha^n$ for $i=1,2$ and $t_\alpha^{-n}(x_i)=x_i$
for $i=3,4,5$, which gives the images of $x_3,x_4,x_5$.  In $Q$ we have
$\overline{\alpha}=b(rb)=r^{-1}b^2$, and since $b^2$ is central,
conjugation by $\overline{\alpha}^{\,-n}$ is conjugation by $r^n$.  For $g=r^ib$
this gives $r^n(r^ib)r^{-n}=r^{i+2n}b$, so $x_1\mapsto r^{2n+1}b$ and
$x_2\mapsto r^{2n}b$.
\end{proof}

For example, for Baykur's curve $d=x_3x_1$, \cref{lem:theta} gives
\[
\overline{t_\alpha^{-n}(d)}=b^{-1}(r^{2n+1}b) ,
\]
so $\overline{t_\alpha^{-n}(d)}=r^{-(2n+1)}$.  By \cref{lem:pi1} this recovers
$\pi_1(X_n)\cong Q/\ncl{r^{2n+1}}\cong\Z/(2n+1)\rtimes_{-1}\Z$.

\subsection{Proof of \texorpdfstring{\cref{thm:ball}}{Theorem 1.1}}\label{subsec:ball}

We first outline the strategy.  By \cref{lem:pi1}, the fundamental group of the
filling $X_n(c)$ is the quotient of $Q=\pi_1(K)=\langle r,b\mid brb^{-1}=r^{-1}\rangle$
by the normal closure of the single element $\overline{t_\alpha^{-n}(c)}$.  So the
whole computation reduces to finding this element for $c=d_m$ and $c=d'_m$.  In
\cref{lem:dm-images} we write $d_m$ and $d'_m$ as words in the generators
$x_1,\ldots,x_5$, using that $t_u^m$ conjugates $x_2$ and $x_3$ by $u^m$, and we
push these words into $Q$ using \cref{lem:theta}.  The result is
$r^{-(4mn+1)}b^2$ for $d_m$ and $r^{4mn}b^{-2}$ for $d'_m$.  Hence both
fundamental groups are of the form
$\langle r,b\mid brb^{-1}=r^{-1},\ b^2=r^\ell\rangle$, with $\ell=4mn+1$ odd in the
first case and $\ell=4mn$ even in the second case.  In
\cref{lem:group-even} we show that such a group has order $4\ell$ for every
$\ell>0$, which gives the order $16mn$ in the second case for $n\ge1$, and in
\cref{lem:group-odd} we use this to identify the group in the first case with
$\Z/(4mn+1)\rtimes_{-1}\Z/4$.  In both
families the orders of the finite groups strictly increase with $n$, for $n\ge0$
in the first family and for $n\ge1$ in the second, while for $n=0$ the group in
the second family is infinite.  Hence the fillings are pairwise
non-homeomorphic.  Finally, the first homology of the fillings is finite, and a
count of handles shows that they are rational homology balls.  The first
homology of their boundaries is computed directly from the planar open book in
\cref{lem:H1-boundaries}.

\begin{figure}[ht]
\centering
\makebox[\textwidth][c]{%
}
\caption{The curves $d_1=t_u(c_{12})=\sigma_2^{-1}(d)$ (left) and
$d'_1=t_u(c_{345})=\sigma_2^{-2}(c_{345})$ (right) on the page $\Sigma_0^6$, with
the holes $\delta_1,\ldots,\delta_5$ from left to right, as in
\cref{fig:baykur-curves}.}
\label{fig:di}
\end{figure}

\begin{lemma}\label[lemma]{lem:dm-images}
Let $m\ge1$ and $n\ge0$.  The curves $d_m$ and $d'_m$ are represented by the words
\[
d_m=u^mx_2u^{-m}x_1,\qquad d'_m=x_5x_4\,u^mx_3u^{-m},
\]
they enclose the holes $\delta_1,\delta_2$ and $\delta_3,\delta_4,\delta_5$,
respectively, and their images in $Q$ under $x\mapsto\overline{t_\alpha^{-n}(x)}$ are
\[
\overline{t_\alpha^{-n}(d_m)}=r^{-(4mn+1)}b^2,\qquad
\overline{t_\alpha^{-n}(d'_m)}=r^{4mn}b^{-2}.
\]
\end{lemma}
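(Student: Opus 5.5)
The plan is to obtain the words for $d_m$ and $d'_m$ by applying $t_u^m$ to the words of the round curves $c_{12}$ and $c_{345}$. The images in $Q$ then follow from \cref{lem:theta} and the normal-form multiplication \eqref{eq:moveR}.

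\emph{Words.} The round curves $c_{12}$ and $c_{345}$ enclose consecutive holes, so by the boundary-word convention of \cref{subsec:commute} they are represented by $x_2x_1$ and $x_5x_4x_3$. The curve $u$ is the round curve enclosing $\delta_2,\delta_3$, with boundary word $u=x_3x_2$. The right-handed twist $t_u$ therefore conjugates $x_2$ and $x_3$ by $u$ and fixes $x_1,x_4,x_5$. The sign is fixed exactly as in the proofs of \cref{lem:images} and \cref{lem:theta}, where $t_\alpha^{-n}(x_i)=\alpha^{-n}x_i\alpha^n$, so $t_u(x_i)=ux_iu^{-1}$. Since $t_u$ fixes $u$, induction gives $t_u^m(x_i)=u^mx_iu^{-m}$ for $i=2,3$. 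Hence
\[
t_u^m(x_2x_1)=u^mx_2u^{-m}x_1,\qquad t_u^m(x_5x_4x_3)=x_5x_4\,u^mx_3u^{-m}.
\]
Because $t_u^m$ is a diffeomorphism of the page fixing $\ast$ and each boundary component, these words represent the simple closed curves $d_m,d'_m$ up to conjugation. The enclosed holes can be read off either from the fact that $t_u^m$ fixes every $\delta_i$, or from the exponent sums of the words, which are $1$ exactly in $x_1,x_2$, respectively in $x_3,x_4,x_5$.

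\emph{Images in $Q$.} Write $\theta$ for the homomorphism $x\mapsto\overline{t_\alpha^{-n}(x)}$ of \cref{lem:theta}. First, $\theta(u)=b^{-1}\cdot r^{2n}b=r^{-2n}$, so $\theta(u^m)=r^{-2mn}$. For $d_m$ I compute
\[
r^{-2mn}(r^{2n}b)r^{2mn}=r^{2n-4mn}b,
\]
and multiplying on the right by $\theta(x_1)=r^{2n+1}b$ gives $r^{2n-4mn-(2n+1)}b^2=r^{-(4mn+1)}b^2$. For $d'_m$, note that $\theta(x_5x_4)=r\cdot r^{-1}b^{-1}=b^{-1}$, and that $r^{-2mn}b^{-1}r^{2mn}=r^{-4mn}b^{-1}$. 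Together these give
\[
b^{-1}r^{-4mn}b^{-1}=r^{4mn}b^{-2}.
\]
Each step uses only $b^{\pm1}r^k=r^{-k}b^{\pm1}$.

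\emph{Main obstacle.} The computation itself is mechanical. The only point needing care is the sign and direction convention: $t_u$ must conjugate by $u$ rather than $u^{-1}$, consistently with how $t_\alpha$ was treated, and the boundary word must be $u=x_3x_2$. A wrong sign would flip $m$ to $-m$. I would check this against the proof of \cref{lem:theta}, and against \cref{fig:di} for $m=1$, where $d_1=\sigma_2^{-1}(d)$. In the generators, \eqref{eq:artininv} gives $\sigma_2^{-1}(x_3x_1)=x_3x_2x_3^{-1}x_1$, which agrees with $u\,x_2u^{-1}x_1=x_3x_2x_2x_2^{-1}x_3^{-1}x_1$.
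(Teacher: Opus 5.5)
Your proposal is correct and follows the paper's proof essentially step for step. It uses the same words obtained from $t_u^m(x_i)=u^mx_iu^{-m}$ for $i=2,3$, the same exponent-sum reading of the enclosed holes, the same $m=1$ check $d_1=\sigma_2^{-1}(d)$, and the same evaluation in $Q$ via \cref{lem:theta} and \eqref{eq:moveR}, with $u\mapsto r^{-2n}$ and $x_5x_4\mapsto b^{-1}$.
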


\begin{proof}
The Dehn twist $t_u$ along the round curve $u=x_3x_2$ conjugates $x_2$ and $x_3$
by $u$, fixes $x_1,x_4,x_5$, and fixes $u$, so $t_u^m(x_i)=u^mx_iu^{-m}$ for
$i=2,3$.  Hence, since $c_{12}=x_2x_1$ and $c_{345}=x_5x_4x_3$,
\[
d_m=u^mx_2u^{-m}x_1,\qquad d'_m=x_5x_4\,u^mx_3u^{-m}.
\]
For $m=1$ the first word is $x_3x_2\,x_2\,x_2^{-1}x_3^{-1}\,x_1=x_3x_2x_3^{-1}x_1$,
which is also $\sigma_2^{-1}(x_3x_1)$, since $\sigma_2^{-1}$ fixes $x_1$ and maps
$x_3$ to $x_3x_2x_3^{-1}$, so $d_1=\sigma_2^{-1}(d)$.  For $m=1$ the second
word is $x_5x_4\,x_3x_2\,x_3\,x_2^{-1}x_3^{-1}$.  The exponent
sums of $x_1,\ldots,x_5$ in the word of $d_m$ are $1,1,0,0,0$, and in the word
of $d'_m$ they are $0,0,1,1,1$, so $d_m$ encloses the holes
$\delta_1,\delta_2$ and $d'_m$ encloses $\delta_3,\delta_4,\delta_5$, as $c_{12}$
and $c_{345}$ do.

We compute the images in $Q$ with \cref{lem:theta} and \eqref{eq:moveR}.
Since $x_3\mapsto b^{-1}$ and $x_2\mapsto r^{2n}b$, the curve $u$ maps to
$b^{-1}r^{2n}b=r^{-2n}$, and $u^m$ maps to $r^{-2mn}$.  Hence
\[
\overline{t_\alpha^{-n}(d_m)}=r^{-2mn}\,(r^{2n}b)\,r^{2mn}\,(r^{2n+1}b)
=r^{-2mn+2n-2mn-(2n+1)}\,b^2=r^{-(4mn+1)}b^2 ,
\]
and, since $x_5x_4\mapsto r(r^{-1}b^{-1})=b^{-1}$,
\[
\overline{t_\alpha^{-n}(d'_m)}=b^{-1}\,r^{-2mn}\,b^{-1}\,r^{2mn}
=r^{2mn+2mn}\,b^{-2}=r^{4mn}b^{-2}.
\]
\end{proof}

\begin{lemma}\label[lemma]{lem:group-even}
Let $\ell\ge0$ be an integer, and let
\[
G=\langle r,b\mid brb^{-1}=r^{-1},\ b^2=r^\ell \rangle .
\]
If $\ell>0$, then $G$ is the dicyclic group $\mathrm{Dic}_\ell=\langle a,x\mid a^{2\ell}=1,\ x^2=a^\ell,\
xax^{-1}=a^{-1}\rangle$.  In particular, $G$ is finite of order $4\ell$, and its
elements are the pairwise distinct elements $r^ib^j$ with $0\le i<2\ell$ and
$j\in\{0,1\}$.  For $\ell =0$ it is the infinite dihedral group
$Q/\ncl{b^2}\cong\Z\rtimes_{-1}\Z/2$.
\end{lemma}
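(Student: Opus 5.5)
For $\ell>0$ I would write down explicit mutually inverse homomorphisms between $G$ and $\mathrm{Dic}_\ell$ and then count elements inside $\mathrm{Dic}_\ell$.

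\emph{The torsion of $r$ in $G$.} Conjugating the relation $b^2=r^\ell$ by $b$ and using $brb^{-1}=r^{-1}$ gives $b^2=r^{-\ell}$. Hence $r^{2\ell}=1$ in $G$.

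\emph{The homomorphism $\mathrm{Dic}_\ell\to G$.} Send $a\mapsto r$ and $x\mapsto b$. The relations $a^{2\ell}=1$, $x^2=a^\ell$ and $xax^{-1}=a^{-1}$ go to $r^{2\ell}=1$, $b^2=r^\ell$ and $brb^{-1}=r^{-1}$. All three hold in $G$, so the map is well defined and surjective.

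\emph{The inverse $G\to\mathrm{Dic}_\ell$.} Send $r\mapsto a$ and $b\mapsto x$. The defining relations of $G$ are among those of $\mathrm{Dic}_\ell$, so this is also well defined. The two maps are inverse on generators, hence $G\cong\mathrm{Dic}_\ell$.

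\emph{The order.} The main point is to show that $\mathrm{Dic}_\ell$ really has $4\ell$ elements, with no collapse. I would realize it concretely. Take the subgroup of the unit quaternions generated by $a=e^{\pi i/\ell}$ and $x=j$. Here $a$ has order exactly $2\ell$, $x^2=-1=a^\ell$, and $xax^{-1}=\bar a=a^{-1}$. This subgroup contains the $4\ell$ distinct elements $a^i$ and $a^ij$ for $0\le i<2\ell$; they are distinct because $a^ij\notin\mathbb C$.

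On the other hand, the relations let every word in $r,b$ be rewritten in the normal form $r^ib^j$ via \eqref{eq:moveR}. Using $b^2=r^\ell$ one reduces to $j\in\{0,1\}$, and using $r^{2\ell}=1$ one reduces to $0\le i<2\ell$. So $|G|\le4\ell$. The quaternion quotient then shows these $4\ell$ normal forms are pairwise distinct, so $|G|=4\ell$ exactly.

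\emph{The case $\ell=0$.} Here $G=\langle r,b\mid brb^{-1}=r^{-1},\ b^2\rangle$, which is $Q/\ncl{b^2}$ by \eqref{eq:Q}. This is the standard presentation of $\Z\rtimes_{-1}\Z/2$. To see that $r$ has infinite order, map to the isometries of $\Z$ given by $r\mapsto(k\mapsto k+1)$ and $b\mapsto(k\mapsto -k)$.

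The only step with real content is the lower bound $|G|\ge4\ell$, which the quaternion model handles.
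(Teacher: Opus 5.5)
Your proof is correct and follows the paper's argument. You derive $r^{2\ell}=1$ by conjugating $b^2=r^\ell$ by $b$, identify $G$ with the standard presentation of $\mathrm{Dic}_\ell$, bound $|G|\le 4\ell$ by the normal forms $r^ib^j$, and deduce distinctness from $|G|=4\ell$. The one difference is that the paper cites $|\mathrm{Dic}_\ell|=4\ell$ as well known, whereas you verify it with the quaternion representation $r\mapsto e^{\pi i/\ell}$, $b\mapsto j$, which makes the argument self-contained.
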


\begin{proof}
Conjugating $b^2=r^\ell$ by $b$ gives $b^2=r^{-\ell}$, hence $r^\ell=r^{-\ell}$ and
$r^{2\ell}=1$ in $G$.  Adding the relation $r^{2\ell}=1$ to the presentation of
$G$ therefore does not change the group, and the resulting presentation is the
standard presentation of $\mathrm{Dic}_\ell$, with $a=r$ and $x=b$.  It is well known
that $\mathrm{Dic}_\ell$ has order $4\ell$.  Since every element of $G$ can be
written as $r^ib^j$ with $0\le i<2\ell$ and $j\in\{0,1\}$, these $4\ell$ elements
are pairwise distinct.  For $\ell =0$ the presentation is
$\langle r,b\mid brb^{-1}=r^{-1},\ b^2=1\rangle=Q/\ncl{b^2}$, the infinite
dihedral group $\Z\rtimes_{-1}\Z/2$, where $\Z=\langle r\rangle$ and
$\Z/2=\langle b\rangle$.
\end{proof}

\begin{lemma}\label[lemma]{lem:group-odd}
Let $k$ be a positive odd integer.  Then the group
\[
G=\langle r,b\mid brb^{-1}=r^{-1},\ b^2=r^k\rangle
\]
is isomorphic to $\Z/k\rtimes_{-1}\Z/4$, a group of order $4k$.
\end{lemma}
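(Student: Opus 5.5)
By \cref{lem:group-even}, applied with $\ell=k>0$, the group $G$ is the dicyclic group of order $4k$. Its elements are the $4k$ distinct elements $r^ib^j$ with $0\le i<2k$ and $j\in\{0,1\}$, and $r^{2k}=1$. So it suffices to exhibit an isomorphism $G\cong\Z/k\rtimes_{-1}\Z/4$, where the target is presented as $H=\langle s,t\mid s^k=1,\ t^4=1,\ tst^{-1}=s^{-1}\rangle$. This group is a semidirect product of order $4k$, since inversion is an automorphism of $\Z/k$ of order dividing $2$, hence dividing $4$.

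The plan is to write down mutually inverse homomorphisms.

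\emph{From $G$ to $H$.} Since $k$ is odd, the element $s$ has odd order, so $2$ is invertible mod $k$. I would send $b\mapsto t$ and $r\mapsto s\,t^2$. The relations of $G$ then need checking:
\begin{itemize}
\item Since $t^2$ is central in $H$ (it commutes with $s$ because conjugation by $t^2$ inverts $s$ twice), we get $t(st^2)t^{-1}=s^{-1}t^2$. We need this to equal $(st^2)^{-1}=s^{-1}t^{-2}=s^{-1}t^2$, which holds because $t^4=1$.
\item For the second relation, $(st^2)^k=s^kt^{2k}=t^{2k}=t^2$, since $k$ is odd and $t^4=1$. This matches $b^2\mapsto t^2$.
\end{itemize}

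\emph{From $H$ to $G$.} The natural candidate is $t\mapsto b$, $s\mapsto r\,b^{-2}=r^{1-k}$. This uses $b^2=r^k$, so that $b^2$ is a power of $r$ and central. Set $s'=r^{1-k}$. The relations are checked as follows:
\begin{itemize}
\item The element $b$ inverts $s'$, since it inverts every power of $r$.
\item $b^4=r^{2k}=1$.
\item $s'^k=r^{k-k^2}=r^{k(1-k)}$. Since $1-k$ is even, $k(1-k)$ is a multiple of $2k$, so $s'^k=1$.
\end{itemize}

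\emph{Inverse property.} On generators, $r\mapsto st^2\mapsto r^{1-k}b^2=r^{1-k}r^k=r$ and $b\mapsto b$. Since $G$ is generated by $r$ and $b$, the composite $G\to H\to G$ is the identity, so $G\to H$ is injective. Both groups have order $4k$, so it is an isomorphism.

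The only delicate point is choosing the preimage of $s$ so that the relation $s^k=1$ holds. Parity of $k$ is used exactly there, and again in $(st^2)^k=t^2$. Alternatively, one could argue abstractly: $\langle r^2\rangle$ has odd order $k$ and is normal, and $b$ has order $4$ and generates a complement. But the explicit maps above are the route I would take.
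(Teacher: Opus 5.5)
Your proof is correct, but it takes a different route from the paper's.

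\textbf{The paper's route.} The paper argues inside $G$. It uses the normal form $r^ib^j$ ($0\le i<2k$, $j\in\{0,1\}$) from Lemma~\ref{lem:group-even} to read off the following facts:
\begin{itemize}
\item $s=r^2$ has order $k$ and $b$ has order $4$;
\item $\langle s\rangle$ is normal;
\item $\langle s\rangle\cap\langle b\rangle=1$, because $r^k$ is an odd power of $r$.
\end{itemize}
The count $k\cdot 4=|G|$ then identifies $G$ as an internal semidirect product.

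\textbf{Your route.} You write down explicit homomorphisms $G\to H$ and $H\to G$ between presentations and verify the relations. All of your checks are right: $t^2$ is central and $t^{2k}=t^2$ for odd $k$ on one side, and $r^{2k}=1$ and $k(1-k)\in 2k\Z$ on the other.

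\textbf{What your route buys.} It needs almost nothing from Lemma~\ref{lem:group-even}. The only input is $r^{2k}=1$, which follows in one line by conjugating $b^2=r^k$ by $b$. You can also drop the order count, because the other composite is the identity as well: $s\mapsto r^{1-k}\mapsto (st^2)^{1-k}=s^{1-k}t^{2(1-k)}=s$, since $1-k$ is even. So your two maps are mutually inverse outright, and $|G|=4k$ comes out as a consequence rather than being an input.

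\textbf{What the paper's route buys.} It is more structural: it exhibits the two factors as the subgroups $\langle r^2\rangle$ and $\langle b\rangle$ of $G$. Your isomorphism identifies exactly the same subgroups. Indeed $r^{1-k}=(r^2)^{(1-k)/2}$, and $(1-k)/2$ is the inverse of $2$ modulo $k$. That is the real content behind your remark that $2$ is invertible mod $k$, a remark your argument never actually uses.
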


\begin{proof}
By \cref{lem:group-even}, $G$ has order $4k$, and its elements $r^ib^j$,
$0\le i<2k$, $j\in\{0,1\}$, are pairwise distinct.  In particular $r$ has order
$2k$, so $s=r^2$ has order $k$, and $b$ has order $4$, since $b^2=r^k\neq1$ and
$b^4=r^{2k}=1$.  The subgroup $\langle s\rangle$ is normal, since
$bsb^{-1}=s^{-1}$.  Moreover $\langle s\rangle\cap\langle b\rangle=1$: the
elements of $\langle b\rangle$ are $1,b,r^k,r^kb$, and neither $r^k$, which is an
odd power of $r$ because $k$ is odd, nor $b$ or $r^kb$ lies in
$\langle s\rangle=\{r^{2i}\}$.  Since $\lvert\langle s\rangle\rvert\cdot
\lvert\langle b\rangle\rvert=4k=\lvert G\rvert$, it follows that $G$ is the
semidirect product of the normal subgroup $\langle s\rangle\cong\Z/k$ and the
subgroup $\langle b\rangle\cong\Z/4$, in which $b$ acts on $s$ by inversion, that
is, $G\cong\Z/k\rtimes_{-1}\Z/4$.
\end{proof}

\begin{lemma}\label[lemma]{lem:H1-boundaries}
For every $m\ge1$,
\[
H_1(Y_m;\Z)\cong\Z/4\oplus\Z/4,\qquad H_1(Y'_m;\Z)\cong\Z/2\oplus\Z/2\oplus\Z/4 .
\]
In particular, $Y_m$ and $Y'_m$ are rational homology spheres, and no $Y_m$ is
homeomorphic to any $Y'_{m'}$.
\end{lemma}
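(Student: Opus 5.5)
Both $Y_m$ and $Y'_m$ are boundaries of Lefschetz fibrations (for instance of $X_{m,0}$ and $X'_{m,0}$), so I would compute $H_1$ of the boundary from the Kirby diagram of such a filling, in the form of \cref{fig:baykur-kirby}. Replace each dotted circle by a $0$--framed unknot to get an integral surgery diagram of $Y_c$ in $S^3$. This is a link of ten components: five unknots $D_1,\ldots,D_5$ dual to the holes, and five curves $c_1,\ldots,c_5$, namely $c$ followed by the four curves $t_\alpha^n(u),\ldots,t_\alpha^n(z)$. $H_1$ of the result is the cokernel of the $10\times10$ linking matrix.

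The linking data is read off as follows.
\begin{itemize}
\item $\mathrm{lk}(D_i,c_k)$ is the exponent sum of $x_i$ in the word for $c_k$. This is $A_{ki}=1$ if $i\in S_{c_k}$ and $0$ otherwise, as in \cref{subsec:allfillings}.
\item The vanishing cycles lie on parallel planar pages drawn at different heights. So two of them have linking number $0$ in $S^3$, and the page framing of each is its blackboard framing, which is $0$.
\item Hence every $c_k$ carries surgery coefficient $-1$.
\end{itemize}
The linking matrix is therefore $\begin{pmatrix}0&A^{T}\\ A&-I\end{pmatrix}$. The relation coming from $c_k$ expresses its meridian as $\mu_k=\sum_iA_{ki}\,g_i$. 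Substituting this into the relations coming from the $D_i$ shows that
\[
H_1(Y_c;\Z)\cong\operatorname{coker}\bigl(A^{T}A\colon\Z^5\to\Z^5\bigr).
\]
Two observations simplify this.
\begin{itemize}
\item The answer depends only on the sets $S_{c_k}$. Conjugation by $t_\alpha^n$ preserves these sets, so $n$ plays no role, and I can take $n=0$.
\item By \cref{lem:dm-images}, $S_{d_m}=\{1,2\}$ and $S_{d'_m}=\{3,4,5\}$ for every $m$, so $m$ plays no role either.
\end{itemize}
This is consistent with the statement being independent of $m$.

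It remains to compute the Smith normal form of $A^TA$ for two explicit $5\times5$ matrices. The rows of $A$ are the indicator vectors of $S_c$ together with $\{2,3\},\{2,4,5\},\{1,3,5\},\{1,4\}$ (the sets of $u,v,w,z$), where $S_c=\{1,2\}$ or $\{3,4,5\}$. The matrix $A^TA$ is symmetric with diagonal $(m_1,\ldots,m_5)$ and off-diagonal entries $n_{ij}$, in the notation of \cref{subsec:allfillings}. As a sanity check, the same recipe applied to Baykur's $S_d=\{1,3\}$ should return a group of rank one.

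I expect $|\det A|=4$ in both cases, because $H_1(X_{m,n})=\Z^5/A^T\Z^5$ has order $4$ by \cref{thm:ball}(2),(3). Then $\det(A^TA)=16$, so $Y_c$ is a rational homology sphere with $|H_1|=16$. The real work is determining the group structure, and this is the step I expect to be the main obstacle. My plan is to row- and column-reduce $A^TA$ by hand and to compute the gcd of its $4\times4$ minors. A $\Z/16$ or $\Z/2\oplus\Z/8$ summand would appear exactly when that gcd is $1$ or $2$, and the claimed answers correspond to the gcds $4$ and $2$ respectively. An independent check is available: since $A$ is square, $\operatorname{coker}(A^TA)$ is an extension of $\operatorname{coker}A^T\cong H_1(X)$ by $\operatorname{coker}A$, and both of these are known.

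Finally, the non-homeomorphism claim follows at once. The groups $\Z/4\oplus\Z/4$ and $\Z/2\oplus\Z/2\oplus\Z/4$ are non-isomorphic, since they have different numbers of cyclic summands.

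The point most likely to hide an error is the claim that the vanishing cycles have pairwise linking $0$ and blackboard framing equal to the page framing. To guard against this, I would verify the formula $H_1\cong\operatorname{coker}(A^TA)$ on Baykur's case, where $H_1(Y)$ must be $\Z\oplus\Z/4$ or a similar group of rank one, compatible with \cref{prop:baykur-surgery}. That surgery description gives $H_1(Y)\cong\Z\oplus\Z/8$ modulo the linking number of $U$ and $J$, so it serves as a concrete test.
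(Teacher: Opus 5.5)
Your reduction to $\operatorname{coker}(A^TA)$ is correct, and it reaches the paper's presentation by a different route. The paper works on the open book $(\Sigma_0^6,t_cB)$ directly: it quotients $H_1(\Sigma_0^6)$ by the classes $[\phi(b_k)-b_k]$ of arcs and evaluates these with Picard--Lefschetz. You instead read the same relations off the linking matrix of the filling $X_{m,0}$. Your $A$ is the transpose of the paper's, so your $A^TA$ is exactly the paper's $AA^T$, with diagonal $m_i$ and off-diagonal entries $n_{ij}$. The point you flag as risky is fine: curves on parallel planar pages are split, and the page framing is the blackboard framing $0$. The paper uses the same fact in the proof of \cref{prop:W0}. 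Your Baykur test also works: the sets $\{1,3\},\{2,3\},\{2,4,5\},\{1,3,5\},\{1,4\}$ give $\Z\oplus\Z/8$, in agreement with \cref{subsec:computer-Y}.

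The gap is that you stop exactly where the content of the lemma begins, and the shortcut you propose for that step does not work.
\begin{itemize}
\item The two matrices have invariant factors $(1,1,1,4,4)$ and $(1,1,2,2,4)$. So the gcd of the $4\times4$ minors is $4$ in both cases, not $4$ and $2$. A gcd of $2$ would force $\Z/2\oplus\Z/8$ and contradict the statement.
\item Together with $\det=16$, this gcd only shows that the exponent is $4$. That is precisely the ambiguity between $\Z/4\oplus\Z/4$ and $\Z/2\oplus\Z/2\oplus\Z/4$.
\item Your extension argument does not resolve it either: it leaves three candidates in each case.
\end{itemize}
What separates the two groups is the number of cyclic summands. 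This equals $5-\operatorname{rank}_{\mathbb F_2}$ of the matrix ($2$ versus $3$), or equivalently it is detected by the gcd of the $3\times3$ minors ($1$ versus $2$).

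The cleanest fix is to do the elimination, as the paper does.
\begin{itemize}
\item For $S_c=\{1,2\}$, the first minus the fifth relation gives $g_5=2g_1$. Then $g_2=-3g_1-2g_3$ and $g_4=g_3-2g_1$, and what remains is $4g_1=0=4g_3$.
\item For $S_c=\{3,4,5\}$, solve for $g_5$ and $g_4$. You are left with generators $g_1$, $h=g_2-g_1$, $h'=g_1+g_3$ and relations $4g_1=0$, $2h=0$, $2h'=0$.
\end{itemize}
The non-homeomorphism statement then follows as you say.
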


\begin{proof}
We write $c$ for $d_m$ or $d'_m$, and we compute $H_1(Y_c;\Z)$ directly from the
planar open book $(\Sigma_0^6,t_cB)$, as in Etnyre and the author \cite{EtnyreOzbagci}.
Let $g_k\in H_1(\Sigma_0^6;\Z)$ be the class of a loop around the hole $\delta_k$,
so that $H_1(\Sigma_0^6;\Z)$ is freely generated by $g_1,\ldots,g_5$, and let $b_k$
be an arc in the page from $\delta_0$ to $\delta_k$.  The $3$--manifold $Y_c$ is
obtained from the mapping torus of the monodromy $\phi=t_cB$ by attaching a solid
torus along each boundary component, and $H_1(Y_c;\Z)$ is the quotient of
$H_1(\Sigma_0^6;\Z)$ by the classes $[\phi(b_k)-b_k]$, $k=1,\ldots,5$.  The
monodromy is a product of right-handed Dehn twists along the curves $c,u,v,w,z$.
Each of these curves bounds a disk in the plane containing some of the holes,
so its class is $\sum_k A_{kl}\,g_k$, where $A$ is the matrix whose $(k,l)$ entry
is $1$ if the $l$-th curve of $(c,u,v,w,z)$ encloses $\delta_k$ and $0$ otherwise.
In particular these Dehn twists act trivially on $H_1(\Sigma_0^6;\Z)$, and by the
Picard--Lefschetz formula, applied to one twist at a time,
\[
[\phi(b_k)-b_k]=\sum_l \langle b_k,\gamma_l\rangle\,[\gamma_l]
=\pm\sum_l A_{kl}\sum_j A_{jl}\,g_j ,
\]
where $\gamma_l$ runs through the curves $c,u,v,w,z$, oriented as the boundaries of
the disks they bound.  Here we used that the algebraic intersection number
$\langle b_k,\gamma_l\rangle$ is $\pm1$, with a sign that is the same for all $l$, if
$\gamma_l$ encloses $\delta_k$, since then $\gamma_l$ separates $\delta_k$ from
$\delta_0$, and that it is $0$ otherwise.  Hence $H_1(Y_c;\Z)$ is the abelian
group generated by $g_1,\ldots,g_5$, subject to the relations given by the rows of
the symmetric matrix $AA^T$.  Since $A$ only depends on the holes enclosed by $c$,
the group $H_1(Y_c;\Z)$ is the same for all $d_m$, and the same for all $d'_m$.

For $c=d_m$, which encloses $\delta_1,\delta_2$, the rows of $AA^T$ give the
relations
\begin{align*}
&3g_1+g_2+g_3+g_4+g_5=0, && g_1+3g_2+g_3+g_4+g_5=0, && g_1+g_2+2g_3+g_5=0,\\
&g_1+g_2+2g_4+g_5=0, && g_1+g_2+g_3+g_4+2g_5=0 .
\end{align*}
The difference of the first and the fifth relation gives $g_5=2g_1$, and then the
third and the first give $g_2=-3g_1-2g_3$ and $g_4=g_3-2g_1$.  The remaining
relations become $4g_1=0$ and $8g_1+4g_3=0$, so $H_1(Y_m;\Z)$ is generated by
$g_1$ and $g_3$ with the relations $4g_1=0$ and $4g_3=0$, that is,
$H_1(Y_m;\Z)\cong\Z/4\oplus\Z/4$.

For $c=d'_m$, which encloses $\delta_3,\delta_4,\delta_5$, the relations are
\begin{align*}
&2g_1+g_3+g_4+g_5=0, && 2g_2+g_3+g_4+g_5=0, && g_1+g_2+3g_3+g_4+2g_5=0,\\
&g_1+g_2+g_3+3g_4+2g_5=0, && g_1+g_2+2g_3+2g_4+3g_5=0 .
\end{align*}
The first and the third give $g_5=-2g_1-g_3-g_4$ and $g_4=-3g_1+g_2+g_3$, and the
remaining relations become $2h=0$, $4g_1=2h$ and $2h'=0$, where $h=g_2-g_1$ and
$h'=g_3+g_1$.  Hence $H_1(Y'_m;\Z)$ is generated by $g_1,h,h'$ with the relations
$4g_1=0$, $2h=0$ and $2h'=0$, that is, $H_1(Y'_m;\Z)\cong\Z/2\oplus\Z/2\oplus\Z/4$.
\end{proof}

\begin{proof}[Proof of \cref{thm:ball}]
Part (1) is \cref{lem:H1-boundaries}.

For (2), by \cref{lem:pi1,lem:dm-images}, $\pi_1(X_{m,n})$ is the group
$\langle r,b\mid brb^{-1}=r^{-1},\ b^2=r^k\rangle$ with $k=4mn+1$, which is odd and
positive.  By \cref{lem:group-odd} it is isomorphic to $\Z/(4mn+1)\rtimes_{-1}\Z/4$,
of order $4(4mn+1)$, including the case $n=0$, where it is $\Z/4$.  Abelianizing
the relations gives $2r=0$ and $2b=kr=r$, since $k$ is odd.  Hence
$H_1(X_{m,n};\Z)$ is generated by $b$, with the relation $4b=0$, that is,
$H_1(X_{m,n};\Z)\cong\Z/4$.

For (3), by \cref{lem:pi1,lem:dm-images}, $\pi_1(X'_{m,n})$ is the group
$\langle r,b\mid brb^{-1}=r^{-1},\ b^2=r^\ell\rangle$ with $\ell=4mn$.  By
\cref{lem:group-even} it is the dicyclic group $\mathrm{Dic}_{4mn}$, of order
$16mn$, for $n\ge1$, and the infinite dihedral group $\Z\rtimes_{-1}\Z/2$ for
$n=0$.  Abelianizing the relations gives $2r=0$ and $2b=\ell r=0$, since $\ell$ is
even.  Hence $H_1(X'_{m,n};\Z)\cong\Z/2\oplus\Z/2$, also for $n=0$.

In both cases the first homology is finite, so $b_1=0$.  Each of these fillings
has a handle decomposition with one $0$--handle, five $1$--handles and five
$2$--handles, so $\chi=1$, hence $b_2=0$, and it is a rational homology ball.

For (4), the numbers $4(4mn+1)$, $n\ge0$, are pairwise distinct, so the fillings
$X_{m,n}$ have pairwise non-isomorphic fundamental groups.  Similarly, the groups
$\pi_1(X'_{m,n})$, $n\ge1$, have pairwise distinct orders $16mn$, and
$\pi_1(X'_{m,0})$ is infinite.  Hence in both families the fillings are pairwise
non-homeomorphic.
\end{proof}

In \cref{subsec:computer-families} we show by computer that, within each family,
some of these $3$--manifolds are pairwise non-homeomorphic.

\begin{figure}[ht]
\centering

\caption{The Kirby diagram of the Lefschetz fibration
$X_{1,0}=X(\Sigma_0^6;d_1,u,v,w,z)$: as in \cref{fig:baykur-kirby}, with the
vanishing cycle $d$ replaced by $d_1$.}
\label{fig:X10-lefschetz}
\end{figure}

\begin{figure}[ht]
\centering
%
\caption{A Kirby diagram of the Stein filling $X_{1,0}$ of $(Y_1,\xi_1)$:
the diagram of $DT^*K$ of \cref{fig:klein-kirby} together with a $2$--handle of
framing $-1$ which runs once over each $1$--handle, in opposite directions.
Compare \cref{fig:W0-kirby}, where the additional $2$--handle is a meridian of the
$2$--handle of $DT^*K$.}
\label{fig:X0d1}
\end{figure}

\begin{proposition}\label[proposition]{rem:X0d1}
The rational homology ball $X_{1,0}=X(\Sigma;d_1,u,v,w,z)$ has the following
Kirby diagrams.
\begin{enumerate}
\item[(a)] $X_{1,0}$ is diffeomorphic to $DT^*K$ with a single $2$--handle
attached with framing $-1$, as in \cref{fig:X0d1}.  The attaching circle of this
$2$--handle runs algebraically once over each $1$--handle of $DT^*K$, in
opposite directions.
\item[(b)] $X_{1,0}$ has the Kirby diagram of \cref{fig:X0d1-trefoil}, with a
single $1$--handle and a single $2$--handle attached along a left-handed trefoil
with framing $-8$, which runs four times over the $1$--handle in the same
direction.  In particular $\pi_1(X_{1,0})\cong\langle a\mid a^4\rangle\cong\Z/4$,
and $Y_1$ is $(0,-8)$--surgery on the two-component link of
\cref{fig:X0d1-trefoil}, formed by an unknot and a left-handed trefoil with
linking number $4$.
\end{enumerate}
\end{proposition}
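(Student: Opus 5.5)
For (a), I will start from the Kirby diagram of \cref{fig:X10-lefschetz}. This is \cref{fig:baykur-kirby} with the $2$--handle $h_d$ replaced by a $2$--handle $h_{d_1}$ attached along $d_1=t_u(c_{12})$ with framing $-1$ relative to the page. I will then run exactly the handle moves from the proof of \cref{prop:baykur-surgery}. First slide $h_w$ over $h_v$ and cancel $h_v$ with the $1$--handle dual to $\delta_5$. Next slide over $h_z$ and cancel with the $1$--handle dual to $\delta_4$. Finally deal with $\delta_3$ using $h_u$. Since $d_1$ encloses only $\delta_1,\delta_2$, it runs over the $1$--handle dual to $\delta_3$ only geometrically: the twist $t_u$ drags it around $\delta_3$, and its exponent sum there is $0$. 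Sliding $h_{d_1}$ over $h_u$ (with sign $-1$) is exactly what undoes this $t_u$. The result is $h_{d_1}'$ running once over each of the $1$--handles dual to $\delta_1,\delta_2$. Because $t_u$ reverses the passage through the $\delta_2$ handle relative to $d$, it runs through them in opposite directions.

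The framing should come out as $-1$ in the diagram, compared with $-2$ for $h_d'$ in \cref{fig:X0-DTK}. I will compute it as $-1+(-1)\pm 2\,\mathrm{lk}(d_1,u)$, where the linking number is taken with respect to the page framing. The other handles $h_v,h_z,h_u$, together with $h_w'''$ of framing $-4$, reduce exactly as in the proof of \cref{thm:klein-main}, since they are untouched. So the remaining diagram is \cref{fig:klein-kirby} plus $h_{d_1}'$, which is \cref{fig:X0d1}. I expect this framing bookkeeping, and showing that the band sum really unknots the detour around $\delta_3$, to be the main obstacle. I would first try checking it against $H_1$: in \cref{fig:X0d1} the linking matrix with the two $1$--handles must present $\Z/4$, which constrains the framing and the linking of $h_{d_1}'$ with the $-4$ handle.

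For (b), I will mimic the end of the proof of \cref{prop:baykur-surgery}. The $-4$ handle runs twice, in the same direction, over the $1$--handle dual to $\delta_1$, and $h_{d_1}'$ runs over it once. So I slide the $-4$ handle twice over $h_{d_1}'$ to free it from that $1$--handle, and then cancel $h_{d_1}'$ against it. The difference from Baykur's case is the opposite direction of $h_{d_1}'$ through the $\delta_2$ handle. As a result, the slid handle now runs $2+2=4$ times, in the same direction, over the surviving $1$--handle, rather than $0$ times. I will compute its framing via the slide formula $f_1+f_2\pm2\,\mathrm{lk}$, applied twice, and aim for $-8$. The knot type, a left-handed trefoil, would then be identified by tracing the attaching circle and simplifying by isotopy, analogous to \cref{fig:final-kirby}, with a hand-drawn sequence as in the ancillary file.

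The remaining claims follow from the resulting diagram. With one $1$--handle generator $a$ and one relator given by a curve winding $4$ times in the same direction, we get $\pi_1=\langle a\mid a^4\rangle$. This is consistent with \cref{thm:ball}(2) for $m=1,n=0$, which serves as a check on the winding number. Replacing the dotted circle by a $0$--framed unknot gives $Y_1$ as $(0,-8)$--surgery on the unknot together with the trefoil. Their linking number equals the algebraic winding, namely $4$.
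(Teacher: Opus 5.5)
Your route is the paper's: reduce \cref{fig:X10-lefschetz} by the moves of \cref{thm:klein-main} plus slides of $h_{d_1}$ over $h_u$ to reach \cref{fig:X0d1}, then slide the $(-4)$--framed handle twice over the $(-1)$--framed one, cancel, and isotope to the trefoil. However, the step at $\delta_3$ fails as you state it.

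The word $d_1=x_3x_2x_3^{-1}x_1$ is cyclically reduced and contains two letters $x_3^{\pm1}$. So $h_{d_1}$ crosses the $1$--handle dual to $\delta_3$ twice geometrically. Each crossing has to be removed by its own slide over $h_u$ before $h_u$ can be cancelled, and after a single slide one of the two crossings remains. Undoing $t_u$ on a curve that meets $u$ in two points takes two band sums with copies of $u$, of opposite signs. This is why the paper slides $h_{d_1}$ twice over $h_u$.

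Your framing formula already signals the problem. The number $-1+(-1)\pm2\,\mathrm{lk}(d_1,u)$ is even for every value of the linking number, so it can never give the $-1$ you are aiming for. With two slides the count works:
\begin{itemize}
\item $\mathrm{lk}(d_1,u)=0$, because the two curves lie on different pages.
\item A first slide (handle addition) gives framing $-2$, and the new handle has linking number $\mathrm{lk}(d_1,u)+\mathrm{lk}(u',u)=-1$ with $u$.
\item The second slide (handle subtraction) gives $-2+(-1)-2\cdot(-1)=-1$.
\end{itemize}
Your direction statement also becomes precise this way. Eliminating $x_3=x_2^{-1}$ sends $d$ to $x_2^{-1}x_1$ but $d_1$ to $x_2x_1$. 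Hence $h'_{d_1}$ crosses the $\delta_2$ $1$--handle in the opposite sense to $h'_d$ of \cref{fig:X0-DTK}.

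The same bookkeeping supplies the number you left open in (b). The two slides add two push-offs of $u$ to $h_{d_1}$ with opposite signs, and $h'''_w$ contains one further push-off. Distinct framing push-offs of $u$ link $-1$ times, and all cross-page linkings vanish. So in \cref{fig:X0d1} the $(-1)$--framed handle has linking number $(-1)-(-1)=0$ with the $(-4)$--framed one. The two slides then give $-4\to-5\to-5+(-1)+2\cdot(-1)=-8$, exactly as in the paper.

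Your proposed homological check could not have caught either issue. $H_1(X_{1,0})$ depends only on the algebraic crossings of the $2$--handles over the $1$--handles. Even $H_1(Y_1)\cong\Z/4\oplus\Z/4$ comes out the same for every framing of the new handle and every linking number with the $(-4)$--framed handle.

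The rest of (b) is correct: the count $2+2=4$ after the cancellation, $\pi_1(X_{1,0})\cong\langle a\mid a^4\rangle$, and the linking number $4$. As in the paper, the knot type has to come from actually tracing the diagram.
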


\begin{proof}
For (a) we start from the Kirby diagram of the Lefschetz fibration $X_{1,0}$ in
\cref{fig:X10-lefschetz}.  The handle moves are those of the proof of
\cref{thm:klein-main}, together with two slides of $h_{d_1}$ over $h_u$ before
the last cancellation.  They lead to \cref{fig:X0d1}, and the sequence of handle
moves, drawn by hand, is included as an ancillary file.  As a consistency check,
the new $2$--handle kills the free part of $H_1(DT^*K;\Z)\cong\Z\oplus\Z/2$, since
it runs algebraically once over each $1$--handle in opposite directions, and it
creates no second homology, since $H_2(DT^*K;\Z)\cong H_2(K;\Z)=0$.  This is in
accordance with $X_{1,0}$ being a rational homology ball.

For (b) we start from the diagram of \cref{fig:X0d1}.  The $(-4)$--framed
$2$--handle is slid twice over the $(-1)$--framed one, which changes its framing
to $-5$ and then to $-5+(-1)+2\cdot(-1)=-8$, and the $(-1)$--framed $2$--handle
is then cancelled against one of the $1$--handles.  After an isotopy this gives
\cref{fig:X0d1-trefoil}, and the sequence of handle moves, drawn by hand, is
included as an ancillary file.  The presentation of $\pi_1(X_{1,0})$ can be read
off from the diagram, with one generator $a$ for the $1$--handle and the relation
$a^4=1$ given by the $2$--handle.  Replacing the dotted circle by a $0$--framed
unknot gives the surgery description of $Y_1$.
\end{proof}

\begin{figure}[ht]
\centering
\begin{tikzpicture}[line cap=round,line join=round]
\draw[black,line width=1pt] plot[smooth] coordinates{(-1.219,2.662) (-1.209,2.651) (-1.200,2.639) (-1.190,2.628) (-1.180,2.616) (-1.171,2.604) (-1.161,2.592) (-1.151,2.579) (-1.142,2.567) (-1.132,2.554) (-1.123,2.540) (-1.113,2.527) (-1.104,2.513) (-1.095,2.499) (-1.087,2.484) (-1.078,2.470) (-1.070,2.455) (-1.062,2.439) (-1.054,2.424) (-1.046,2.408) (-1.039,2.391) (-1.032,2.375) (-1.025,2.358) (-1.019,2.340) (-1.013,2.323) (-1.007,2.305) (-1.001,2.287) (-0.996,2.268) (-0.991,2.250) (-0.987,2.231) (-0.982,2.211) (-0.979,2.192)};
\draw[black,line width=1pt] plot[smooth] coordinates{(-0.962,2.030) (-0.962,2.009) (-0.963,1.988) (-0.964,1.967) (-0.965,1.947) (-0.967,1.926) (-0.970,1.905) (-0.973,1.885) (-0.977,1.864) (-0.981,1.844) (-0.987,1.824) (-0.993,1.804) (-0.999,1.784) (-1.007,1.764) (-1.015,1.745) (-1.024,1.725) (-1.034,1.707) (-1.045,1.688) (-1.056,1.670) (-1.069,1.652) (-1.082,1.634) (-1.097,1.617) (-1.112,1.600) (-1.128,1.583) (-1.146,1.567) (-1.164,1.551) (-1.183,1.536) (-1.204,1.521) (-1.225,1.506) (-1.247,1.492) (-1.271,1.478) (-1.295,1.464) (-1.320,1.451) (-1.346,1.438) (-1.373,1.425) (-1.401,1.413) (-1.429,1.401) (-1.459,1.389) (-1.489,1.377) (-1.519,1.366) (-1.551,1.355) (-1.582,1.343) (-1.615,1.332) (-1.647,1.321) (-1.680,1.311) (-1.714,1.300) (-1.747,1.289) (-1.781,1.278) (-1.814,1.266) (-1.848,1.255) (-1.882,1.244) (-1.915,1.232) (-1.949,1.220) (-1.982,1.208) (-2.015,1.196) (-2.048,1.183) (-2.080,1.170) (-2.112,1.157) (-2.144,1.144) (-2.175,1.130) (-2.206,1.115) (-2.236,1.101) (-2.266,1.086) (-2.296,1.070) (-2.325,1.055) (-2.353,1.039) (-2.381,1.022) (-2.409,1.005) (-2.435,0.988) (-2.462,0.971) (-2.487,0.953) (-2.513,0.935) (-2.537,0.916) (-2.561,0.898) (-2.584,0.879) (-2.607,0.860) (-2.629,0.840) (-2.650,0.821) (-2.670,0.801) (-2.690,0.781)};
\draw[black,line width=1pt] plot[smooth] coordinates{(-2.791,0.660) (-2.805,0.640) (-2.819,0.619) (-2.831,0.599) (-2.842,0.579) (-2.853,0.559) (-2.862,0.539) (-2.871,0.519) (-2.879,0.500) (-2.886,0.480) (-2.892,0.461) (-2.898,0.442) (-2.902,0.423) (-2.906,0.405) (-2.909,0.386) (-2.911,0.368) (-2.912,0.350) (-2.913,0.332) (-2.913,0.315) (-2.912,0.298) (-2.911,0.281) (-2.909,0.264) (-2.906,0.248) (-2.904,0.231) (-2.900,0.215) (-2.896,0.200) (-2.892,0.184) (-2.887,0.169) (-2.882,0.155) (-2.877,0.140) (-2.872,0.126) (-2.866,0.112) (-2.860,0.098) (-2.854,0.084) (-2.848,0.071) (-2.842,0.058) (-2.835,0.045) (-2.829,0.033) (-2.823,0.021) (-2.816,0.009) (-2.810,-0.003) (-2.803,-0.015) (-2.797,-0.026) (-2.791,-0.037) (-2.784,-0.047) (-2.778,-0.058) (-2.772,-0.068) (-2.765,-0.078) (-2.759,-0.087) (-2.752,-0.096) (-2.746,-0.106) (-2.740,-0.114) (-2.733,-0.123) (-2.726,-0.131) (-2.720,-0.139) (-2.713,-0.147) (-2.706,-0.154) (-2.698,-0.161) (-2.691,-0.168) (-2.684,-0.175) (-2.676,-0.181) (-2.668,-0.187) (-2.660,-0.193) (-2.652,-0.198) (-2.643,-0.204) (-2.635,-0.209) (-2.626,-0.213) (-2.616,-0.217) (-2.607,-0.221) (-2.597,-0.225) (-2.587,-0.228) (-2.577,-0.231) (-2.566,-0.234) (-2.555,-0.236) (-2.544,-0.238) (-2.533,-0.240) (-2.521,-0.241) (-2.509,-0.242) (-2.497,-0.243) (-2.484,-0.243) (-2.471,-0.243) (-2.458,-0.242) (-2.444,-0.241) (-2.430,-0.240) (-2.416,-0.238) (-2.401,-0.236) (-2.386,-0.233) (-2.371,-0.230) (-2.355,-0.227) (-2.339,-0.223) (-2.323,-0.218) (-2.306,-0.213) (-2.289,-0.208) (-2.271,-0.202) (-2.253,-0.196) (-2.234,-0.189) (-2.215,-0.182) (-2.195,-0.175) (-2.176,-0.166) (-2.155,-0.158) (-2.134,-0.149) (-2.113,-0.140) (-2.091,-0.130) (-2.068,-0.119) (-2.045,-0.109) (-2.021,-0.097) (-1.997,-0.086) (-1.973,-0.074) (-1.947,-0.062) (-1.921,-0.049) (-1.895,-0.036) (-1.868,-0.023) (-1.840,-0.009) (-1.812,0.005) (-1.783,0.020) (-1.753,0.034) (-1.723,0.049) (-1.692,0.064) (-1.661,0.079) (-1.629,0.095) (-1.597,0.111) (-1.564,0.127) (-1.531,0.143) (-1.498,0.159) (-1.464,0.175) (-1.429,0.192) (-1.395,0.209) (-1.360,0.225) (-1.325,0.242) (-1.289,0.259) (-1.254,0.276) (-1.218,0.293) (-1.183,0.310) (-1.147,0.327) (-1.112,0.344) (-1.076,0.362) (-1.041,0.379) (-1.006,0.396) (-0.971,0.413) (-0.937,0.431) (-0.903,0.448) (-0.869,0.465) (-0.836,0.482) (-0.803,0.500) (-0.770,0.517) (-0.738,0.534) (-0.707,0.552) (-0.676,0.569) (-0.645,0.586) (-0.616,0.603) (-0.587,0.621)};
\draw[black,line width=1pt] plot[smooth] coordinates{(-0.451,0.707) (-0.426,0.724) (-0.402,0.741) (-0.378,0.758) (-0.355,0.776) (-0.333,0.793) (-0.312,0.810) (-0.291,0.827) (-0.271,0.844) (-0.251,0.861) (-0.232,0.878) (-0.214,0.895) (-0.197,0.912) (-0.180,0.929) (-0.164,0.946) (-0.148,0.963) (-0.133,0.980) (-0.119,0.996) (-0.105,1.013) (-0.092,1.030) (-0.079,1.047) (-0.067,1.064) (-0.056,1.081) (-0.045,1.098) (-0.034,1.115) (-0.024,1.132) (-0.014,1.149) (-0.005,1.166) (0.004,1.183) (0.012,1.201) (0.020,1.218) (0.027,1.236) (0.034,1.254) (0.041,1.272) (0.047,1.290) (0.053,1.309) (0.058,1.328) (0.064,1.347) (0.068,1.366) (0.073,1.386) (0.077,1.405) (0.081,1.426) (0.084,1.446) (0.088,1.467) (0.090,1.487) (0.093,1.508) (0.095,1.530) (0.097,1.551) (0.099,1.573) (0.100,1.595) (0.102,1.617) (0.102,1.639) (0.103,1.661) (0.103,1.684) (0.103,1.706) (0.103,1.728) (0.102,1.751) (0.101,1.773) (0.100,1.796) (0.099,1.818) (0.097,1.840) (0.095,1.863) (0.093,1.885) (0.090,1.907) (0.087,1.929) (0.084,1.950) (0.080,1.972) (0.077,1.994) (0.073,2.015) (0.068,2.036) (0.064,2.057) (0.059,2.078) (0.054,2.099) (0.048,2.120) (0.043,2.140) (0.037,2.161) (0.030,2.181) (0.024,2.201) (0.017,2.221) (0.010,2.240) (0.003,2.260) (-0.005,2.279) (-0.013,2.299) (-0.021,2.318) (-0.029,2.337) (-0.038,2.355) (-0.047,2.374) (-0.056,2.392) (-0.065,2.411) (-0.075,2.429) (-0.085,2.447) (-0.095,2.465) (-0.105,2.482) (-0.116,2.500) (-0.127,2.517) (-0.138,2.535) (-0.149,2.552) (-0.161,2.569) (-0.173,2.586) (-0.185,2.602) (-0.197,2.619) (-0.210,2.635) (-0.223,2.652) (-0.236,2.668) (-0.249,2.684) (-0.263,2.700) (-0.277,2.715) (-0.291,2.731) (-0.305,2.746) (-0.320,2.761) (-0.335,2.776) (-0.350,2.791) (-0.366,2.805) (-0.381,2.819) (-0.397,2.833) (-0.414,2.847) (-0.430,2.861) (-0.447,2.874) (-0.464,2.887) (-0.481,2.899) (-0.498,2.912) (-0.515,2.924) (-0.533,2.936) (-0.551,2.947) (-0.569,2.958) (-0.587,2.968) (-0.605,2.979) (-0.623,2.988) (-0.641,2.998) (-0.660,3.007) (-0.678,3.015) (-0.697,3.023) (-0.715,3.031) (-0.734,3.038) (-0.752,3.044) (-0.771,3.050) (-0.789,3.056) (-0.808,3.061) (-0.826,3.065) (-0.844,3.069) (-0.862,3.073) (-0.880,3.075) (-0.898,3.078) (-0.916,3.080) (-0.933,3.081) (-0.950,3.081) (-0.968,3.082) (-0.984,3.081) (-1.001,3.080) (-1.018,3.079) (-1.034,3.077) (-1.050,3.075) (-1.065,3.072) (-1.081,3.069) (-1.096,3.065) (-1.111,3.061) (-1.125,3.057) (-1.139,3.052) (-1.153,3.046) (-1.166,3.041) (-1.179,3.035) (-1.192,3.028) (-1.204,3.022) (-1.215,3.015) (-1.226,3.008) (-1.237,3.000) (-1.247,2.993) (-1.257,2.985) (-1.266,2.977) (-1.274,2.969) (-1.282,2.960) (-1.289,2.952) (-1.296,2.943) (-1.302,2.934) (-1.307,2.926) (-1.312,2.917) (-1.316,2.908) (-1.320,2.898) (-1.322,2.889) (-1.324,2.880) (-1.326,2.871) (-1.326,2.862) (-1.326,2.852) (-1.326,2.843) (-1.324,2.833) (-1.322,2.824) (-1.320,2.814) (-1.316,2.805) (-1.313,2.795) (-1.308,2.785)};
\draw[green!50!black,line width=1pt] plot[smooth] coordinates{(-2.465,3.025) (-2.490,3.015) (-2.514,3.004) (-2.537,2.993) (-2.560,2.982) (-2.583,2.971) (-2.606,2.959) (-2.628,2.947) (-2.649,2.935) (-2.671,2.923) (-2.691,2.910) (-2.712,2.897) (-2.732,2.884) (-2.751,2.871) (-2.770,2.858) (-2.789,2.844) (-2.807,2.830) (-2.825,2.817) (-2.843,2.803) (-2.860,2.788) (-2.876,2.774) (-2.893,2.760) (-2.908,2.745) (-2.924,2.730) (-2.939,2.715) (-2.954,2.700) (-2.968,2.685) (-2.982,2.670) (-2.996,2.655) (-3.009,2.639) (-3.022,2.623) (-3.034,2.608) (-3.047,2.592) (-3.059,2.576) (-3.070,2.559) (-3.081,2.543) (-3.092,2.526) (-3.103,2.510) (-3.113,2.493) (-3.123,2.476) (-3.133,2.458) (-3.143,2.441) (-3.152,2.423) (-3.160,2.406) (-3.169,2.388) (-3.177,2.369) (-3.185,2.351) (-3.192,2.332) (-3.200,2.314) (-3.207,2.295) (-3.213,2.275) (-3.219,2.256) (-3.225,2.236) (-3.231,2.216) (-3.237,2.196) (-3.242,2.176) (-3.246,2.156) (-3.251,2.135) (-3.255,2.114) (-3.259,2.093) (-3.262,2.071) (-3.266,2.050) (-3.269,2.028) (-3.271,2.006) (-3.274,1.984) (-3.276,1.961) (-3.278,1.939) (-3.279,1.916) (-3.280,1.893) (-3.281,1.870) (-3.282,1.847) (-3.282,1.824) (-3.282,1.800) (-3.282,1.777) (-3.282,1.753) (-3.281,1.730) (-3.280,1.706) (-3.278,1.683) (-3.276,1.659) (-3.274,1.636) (-3.272,1.612) (-3.269,1.589) (-3.267,1.566) (-3.263,1.543) (-3.260,1.520) (-3.256,1.498) (-3.251,1.475) (-3.247,1.453) (-3.242,1.431) (-3.237,1.410) (-3.231,1.388) (-3.225,1.367) (-3.218,1.347) (-3.211,1.326) (-3.204,1.306) (-3.197,1.285) (-3.188,1.265) (-3.180,1.246) (-3.171,1.226) (-3.161,1.207) (-3.151,1.187) (-3.141,1.168) (-3.130,1.149) (-3.119,1.130) (-3.107,1.111) (-3.094,1.093) (-3.081,1.074) (-3.068,1.055) (-3.054,1.036) (-3.039,1.018) (-3.024,0.999) (-3.008,0.980) (-2.992,0.962) (-2.975,0.943) (-2.957,0.924) (-2.939,0.905) (-2.920,0.886) (-2.901,0.867) (-2.881,0.848) (-2.860,0.830) (-2.839,0.811) (-2.817,0.792) (-2.794,0.772) (-2.771,0.753) (-2.747,0.734) (-2.723,0.715) (-2.698,0.696) (-2.672,0.677) (-2.646,0.658) (-2.619,0.639) (-2.592,0.620) (-2.564,0.600) (-2.535,0.581) (-2.506,0.562) (-2.477,0.543) (-2.447,0.524) (-2.417,0.505) (-2.386,0.485) (-2.355,0.466) (-2.324,0.447) (-2.292,0.428) (-2.260,0.409) (-2.228,0.390) (-2.196,0.371) (-2.163,0.352) (-2.130,0.333) (-2.098,0.314) (-2.065,0.296) (-2.032,0.277) (-2.000,0.258) (-1.967,0.240) (-1.935,0.221) (-1.902,0.203) (-1.870,0.185) (-1.838,0.167) (-1.806,0.149) (-1.775,0.131) (-1.744,0.113)};
\draw[green!50!black,line width=1pt] plot[smooth] coordinates{(-1.594,0.028) (-1.565,0.012) (-1.537,-0.004) (-1.509,-0.019) (-1.482,-0.035) (-1.455,-0.050) (-1.428,-0.064) (-1.403,-0.078) (-1.377,-0.092) (-1.352,-0.106) (-1.328,-0.119) (-1.304,-0.131) (-1.281,-0.144) (-1.258,-0.155) (-1.235,-0.167) (-1.213,-0.178) (-1.192,-0.188) (-1.171,-0.198) (-1.150,-0.207) (-1.130,-0.216) (-1.110,-0.225) (-1.091,-0.233) (-1.072,-0.241) (-1.053,-0.248) (-1.035,-0.255) (-1.018,-0.261) (-1.000,-0.266) (-0.983,-0.272) (-0.966,-0.276) (-0.950,-0.281) (-0.934,-0.285) (-0.919,-0.288) (-0.903,-0.291) (-0.888,-0.294) (-0.874,-0.296) (-0.859,-0.298) (-0.846,-0.299) (-0.832,-0.300) (-0.818,-0.301) (-0.805,-0.301) (-0.793,-0.301) (-0.780,-0.300) (-0.768,-0.299) (-0.756,-0.298) (-0.745,-0.296) (-0.733,-0.294) (-0.722,-0.292) (-0.711,-0.289) (-0.701,-0.286) (-0.691,-0.283) (-0.681,-0.279) (-0.671,-0.275) (-0.661,-0.271) (-0.652,-0.266) (-0.643,-0.261) (-0.634,-0.256) (-0.626,-0.250) (-0.618,-0.245) (-0.610,-0.238) (-0.602,-0.232) (-0.594,-0.225) (-0.587,-0.218) (-0.579,-0.211) (-0.572,-0.203) (-0.565,-0.195) (-0.559,-0.187) (-0.552,-0.179) (-0.546,-0.170) (-0.539,-0.161) (-0.533,-0.152) (-0.527,-0.142) (-0.521,-0.132) (-0.515,-0.122) (-0.509,-0.111) (-0.504,-0.101) (-0.498,-0.090) (-0.492,-0.078) (-0.486,-0.067) (-0.481,-0.055) (-0.475,-0.042) (-0.470,-0.030) (-0.464,-0.017) (-0.459,-0.004) (-0.453,0.010) (-0.448,0.023) (-0.442,0.038) (-0.437,0.052) (-0.432,0.067) (-0.427,0.082) (-0.422,0.097) (-0.417,0.112) (-0.413,0.128) (-0.409,0.145) (-0.405,0.161) (-0.401,0.178) (-0.398,0.195) (-0.395,0.213) (-0.393,0.231) (-0.391,0.249) (-0.390,0.267) (-0.389,0.286) (-0.389,0.305) (-0.390,0.324) (-0.391,0.344) (-0.393,0.363) (-0.396,0.383) (-0.399,0.404) (-0.404,0.424) (-0.409,0.445) (-0.415,0.466) (-0.422,0.487) (-0.429,0.508) (-0.438,0.529) (-0.447,0.551) (-0.458,0.573) (-0.469,0.594) (-0.481,0.616) (-0.494,0.638) (-0.507,0.659) (-0.522,0.681) (-0.537,0.703) (-0.553,0.724) (-0.570,0.746) (-0.588,0.767) (-0.606,0.788) (-0.625,0.809) (-0.645,0.830) (-0.665,0.850) (-0.687,0.871) (-0.708,0.891) (-0.731,0.910) (-0.754,0.930) (-0.777,0.949) (-0.801,0.968) (-0.826,0.986) (-0.851,1.004) (-0.877,1.022) (-0.903,1.040) (-0.929,1.057) (-0.956,1.073) (-0.984,1.090) (-1.012,1.106) (-1.040,1.121) (-1.069,1.137) (-1.098,1.151) (-1.128,1.166) (-1.157,1.180) (-1.188,1.194) (-1.218,1.208) (-1.249,1.221) (-1.279,1.235) (-1.310,1.248) (-1.342,1.260) (-1.373,1.273) (-1.404,1.286) (-1.435,1.298) (-1.466,1.310) (-1.497,1.322)};
\draw[green!50!black,line width=1pt] plot[smooth] coordinates{(-1.646,1.383) (-1.674,1.395) (-1.702,1.408) (-1.729,1.420) (-1.756,1.433) (-1.782,1.445) (-1.807,1.458) (-1.831,1.471) (-1.854,1.484) (-1.877,1.497) (-1.898,1.510) (-1.919,1.524) (-1.938,1.537) (-1.957,1.551) (-1.975,1.564) (-1.991,1.578) (-2.007,1.592) (-2.022,1.606) (-2.035,1.620) (-2.048,1.634) (-2.060,1.649) (-2.071,1.663) (-2.081,1.677) (-2.090,1.691) (-2.098,1.706) (-2.106,1.720) (-2.112,1.734) (-2.118,1.748) (-2.123,1.762) (-2.127,1.776) (-2.131,1.790) (-2.134,1.804) (-2.136,1.817) (-2.137,1.831) (-2.138,1.844) (-2.138,1.857) (-2.138,1.870) (-2.137,1.882) (-2.135,1.895) (-2.133,1.907) (-2.131,1.918) (-2.127,1.930) (-2.124,1.941) (-2.120,1.951) (-2.115,1.961) (-2.110,1.971) (-2.105,1.981) (-2.099,1.990) (-2.092,1.998) (-2.085,2.006) (-2.078,2.014) (-2.070,2.021) (-2.062,2.027) (-2.054,2.033) (-2.045,2.039) (-2.036,2.044) (-2.026,2.049) (-2.016,2.053) (-2.005,2.056) (-1.994,2.059) (-1.983,2.062) (-1.971,2.064) (-1.959,2.065) (-1.946,2.066) (-1.933,2.067) (-1.920,2.067)};
\draw[green!50!black,line width=1pt] plot[smooth] coordinates{(-1.762,2.052) (-1.744,2.050) (-1.725,2.047) (-1.706,2.044) (-1.687,2.041) (-1.668,2.039) (-1.648,2.036) (-1.628,2.033) (-1.607,2.031) (-1.587,2.029) (-1.566,2.027) (-1.544,2.025) (-1.523,2.023) (-1.501,2.022) (-1.480,2.020) (-1.458,2.020) (-1.436,2.019) (-1.413,2.019) (-1.391,2.019) (-1.369,2.019) (-1.346,2.020) (-1.324,2.022) (-1.301,2.023) (-1.279,2.025) (-1.256,2.028) (-1.234,2.031) (-1.212,2.034) (-1.190,2.038) (-1.167,2.042) (-1.145,2.047) (-1.124,2.052) (-1.102,2.057) (-1.081,2.063) (-1.059,2.070) (-1.038,2.076) (-1.018,2.084) (-0.997,2.091) (-0.977,2.100) (-0.958,2.108) (-0.938,2.117) (-0.919,2.127) (-0.901,2.137) (-0.883,2.147) (-0.865,2.158) (-0.848,2.170) (-0.831,2.182) (-0.815,2.194) (-0.799,2.207) (-0.784,2.221) (-0.770,2.235) (-0.756,2.249) (-0.743,2.264) (-0.730,2.280) (-0.718,2.296) (-0.706,2.313) (-0.696,2.330) (-0.685,2.348) (-0.676,2.366) (-0.667,2.385) (-0.659,2.404) (-0.651,2.425) (-0.644,2.445) (-0.638,2.466) (-0.632,2.488) (-0.627,2.510) (-0.622,2.533) (-0.618,2.556) (-0.615,2.580) (-0.612,2.604) (-0.610,2.629) (-0.608,2.654) (-0.607,2.680) (-0.607,2.706) (-0.607,2.732) (-0.607,2.759) (-0.608,2.786) (-0.610,2.813) (-0.612,2.841) (-0.614,2.868) (-0.617,2.896)};
\draw[green!50!black,line width=1pt] plot[smooth] coordinates{(-0.642,3.065) (-0.647,3.093) (-0.653,3.121) (-0.659,3.149) (-0.666,3.176) (-0.672,3.204) (-0.679,3.231) (-0.687,3.258) (-0.694,3.284) (-0.702,3.310) (-0.710,3.336) (-0.719,3.362) (-0.728,3.387) (-0.737,3.411) (-0.746,3.435) (-0.755,3.459) (-0.765,3.483) (-0.775,3.505) (-0.786,3.528) (-0.796,3.550) (-0.807,3.571) (-0.818,3.592) (-0.830,3.612) (-0.841,3.632) (-0.853,3.652) (-0.865,3.671) (-0.878,3.689) (-0.891,3.707) (-0.904,3.724) (-0.917,3.741) (-0.931,3.758) (-0.945,3.773) (-0.959,3.789) (-0.974,3.803) (-0.988,3.818) (-1.003,3.831) (-1.019,3.844) (-1.035,3.857) (-1.050,3.869) (-1.067,3.881) (-1.083,3.892) (-1.100,3.902) (-1.117,3.912) (-1.135,3.921) (-1.152,3.930) (-1.170,3.938) (-1.189,3.945) (-1.207,3.952) (-1.226,3.959) (-1.245,3.965) (-1.264,3.970) (-1.284,3.974) (-1.304,3.978) (-1.324,3.982) (-1.344,3.984) (-1.365,3.986) (-1.386,3.988) (-1.407,3.989) (-1.428,3.989) (-1.449,3.989) (-1.471,3.988) (-1.492,3.986) (-1.514,3.984) (-1.536,3.981) (-1.558,3.978) (-1.580,3.974) (-1.601,3.969) (-1.623,3.964) (-1.645,3.958) (-1.667,3.952) (-1.689,3.944) (-1.711,3.937) (-1.732,3.928) (-1.754,3.920) (-1.775,3.910) (-1.796,3.900) (-1.817,3.889) (-1.837,3.878) (-1.858,3.866) (-1.878,3.854) (-1.897,3.841) (-1.917,3.827) (-1.936,3.813) (-1.955,3.798) (-1.974,3.783) (-1.992,3.767) (-2.010,3.750) (-2.027,3.733) (-2.045,3.715) (-2.062,3.697) (-2.078,3.678) (-2.094,3.658) (-2.110,3.638) (-2.126,3.617) (-2.141,3.596) (-2.156,3.573) (-2.171,3.551) (-2.185,3.528) (-2.199,3.504) (-2.212,3.480) (-2.225,3.455) (-2.238,3.430) (-2.251,3.404) (-2.263,3.377) (-2.275,3.351) (-2.287,3.323) (-2.298,3.296) (-2.309,3.268) (-2.320,3.239) (-2.330,3.210) (-2.340,3.181) (-2.349,3.152) (-2.359,3.123) (-2.367,3.093) (-2.376,3.063) (-2.384,3.033) (-2.392,3.003) (-2.399,2.973) (-2.406,2.943) (-2.412,2.913) (-2.418,2.884) (-2.423,2.854) (-2.428,2.824) (-2.433,2.795) (-2.437,2.766) (-2.440,2.738) (-2.443,2.709) (-2.446,2.681) (-2.448,2.654) (-2.449,2.627) (-2.450,2.600) (-2.450,2.574) (-2.450,2.548) (-2.449,2.523) (-2.448,2.499) (-2.445,2.475) (-2.443,2.451) (-2.440,2.428) (-2.436,2.406) (-2.431,2.384) (-2.426,2.363) (-2.420,2.342) (-2.414,2.323) (-2.407,2.303) (-2.400,2.285) (-2.392,2.267) (-2.384,2.249) (-2.375,2.232) (-2.365,2.216) (-2.355,2.200) (-2.345,2.185) (-2.334,2.171) (-2.322,2.157) (-2.310,2.144) (-2.298,2.131) (-2.286,2.119) (-2.273,2.108) (-2.260,2.097) (-2.246,2.087) (-2.233,2.077) (-2.219,2.068) (-2.205,2.059) (-2.190,2.052) (-2.176,2.044) (-2.162,2.038) (-2.147,2.032)};
\draw[green!50!black,line width=1pt] plot[smooth] coordinates{(-2.004,2.005) (-1.990,2.005) (-1.977,2.007) (-1.964,2.009) (-1.951,2.011) (-1.938,2.014) (-1.925,2.018) (-1.913,2.022) (-1.901,2.027) (-1.889,2.032) (-1.877,2.038) (-1.865,2.045) (-1.854,2.051) (-1.843,2.059) (-1.832,2.067) (-1.821,2.075) (-1.810,2.084) (-1.800,2.093) (-1.789,2.102) (-1.779,2.112) (-1.768,2.123) (-1.758,2.133) (-1.748,2.144) (-1.738,2.155) (-1.728,2.166) (-1.718,2.178) (-1.708,2.190) (-1.698,2.202) (-1.688,2.214) (-1.678,2.226) (-1.668,2.238) (-1.658,2.251) (-1.647,2.263) (-1.637,2.276) (-1.627,2.289) (-1.616,2.301) (-1.606,2.314) (-1.595,2.326) (-1.584,2.339) (-1.573,2.352) (-1.562,2.364) (-1.551,2.377) (-1.540,2.389) (-1.529,2.402) (-1.517,2.414) (-1.506,2.427) (-1.494,2.439) (-1.483,2.451) (-1.471,2.463) (-1.460,2.475) (-1.448,2.487) (-1.437,2.499) (-1.426,2.511) (-1.414,2.523) (-1.403,2.535) (-1.392,2.547) (-1.382,2.558) (-1.371,2.570) (-1.361,2.582) (-1.351,2.593) (-1.341,2.605) (-1.332,2.616) (-1.322,2.628) (-1.314,2.639) (-1.305,2.651) (-1.298,2.662) (-1.290,2.673) (-1.283,2.685) (-1.277,2.696) (-1.271,2.707) (-1.266,2.718) (-1.261,2.729) (-1.257,2.741) (-1.254,2.752) (-1.251,2.763) (-1.249,2.774) (-1.248,2.785) (-1.247,2.796) (-1.247,2.807) (-1.248,2.818) (-1.249,2.830) (-1.252,2.841) (-1.255,2.852) (-1.259,2.862) (-1.263,2.873)};
\draw[green!50!black,line width=1pt] plot[smooth] coordinates{(-1.367,2.989) (-1.381,2.999) (-1.396,3.008) (-1.411,3.018) (-1.428,3.027) (-1.445,3.036) (-1.463,3.044) (-1.481,3.053) (-1.501,3.061) (-1.521,3.068) (-1.541,3.076) (-1.562,3.083) (-1.584,3.090) (-1.607,3.096) (-1.630,3.102) (-1.653,3.107) (-1.677,3.112) (-1.701,3.117) (-1.726,3.121) (-1.751,3.125) (-1.776,3.128) (-1.802,3.130) (-1.828,3.132) (-1.855,3.134) (-1.881,3.135) (-1.908,3.135) (-1.935,3.135) (-1.962,3.134) (-1.989,3.133) (-2.016,3.132) (-2.044,3.129) (-2.071,3.126) (-2.098,3.123) (-2.125,3.119) (-2.153,3.115) (-2.180,3.110) (-2.207,3.104) (-2.234,3.098) (-2.260,3.092) (-2.287,3.085)};
\fill (0.103,1.684) circle (2.4pt);
\node[green!50!black,anchor=east] at (-3.328,1.800) {$-8$};
\end{tikzpicture}
\hspace{1.2cm}
\begin{tikzpicture}[line cap=round,line join=round]
\draw[black,line width=1pt] plot[smooth] coordinates{(-1.219,2.662) (-1.209,2.651) (-1.200,2.639) (-1.190,2.628) (-1.180,2.616) (-1.171,2.604) (-1.161,2.592) (-1.151,2.579) (-1.142,2.567) (-1.132,2.554) (-1.123,2.540) (-1.113,2.527) (-1.104,2.513) (-1.095,2.499) (-1.087,2.484) (-1.078,2.470) (-1.070,2.455) (-1.062,2.439) (-1.054,2.424) (-1.046,2.408) (-1.039,2.391) (-1.032,2.375) (-1.025,2.358) (-1.019,2.340) (-1.013,2.323) (-1.007,2.305) (-1.001,2.287) (-0.996,2.268) (-0.991,2.250) (-0.987,2.231) (-0.982,2.211) (-0.979,2.192)};
\draw[black,line width=1pt] plot[smooth] coordinates{(-0.962,2.030) (-0.962,2.009) (-0.963,1.988) (-0.964,1.967) (-0.965,1.947) (-0.967,1.926) (-0.970,1.905) (-0.973,1.885) (-0.977,1.864) (-0.981,1.844) (-0.987,1.824) (-0.993,1.804) (-0.999,1.784) (-1.007,1.764) (-1.015,1.745) (-1.024,1.725) (-1.034,1.707) (-1.045,1.688) (-1.056,1.670) (-1.069,1.652) (-1.082,1.634) (-1.097,1.617) (-1.112,1.600) (-1.128,1.583) (-1.146,1.567) (-1.164,1.551) (-1.183,1.536) (-1.204,1.521) (-1.225,1.506) (-1.247,1.492) (-1.271,1.478) (-1.295,1.464) (-1.320,1.451) (-1.346,1.438) (-1.373,1.425) (-1.401,1.413) (-1.429,1.401) (-1.459,1.389) (-1.489,1.377) (-1.519,1.366) (-1.551,1.355) (-1.582,1.343) (-1.615,1.332) (-1.647,1.321) (-1.680,1.311) (-1.714,1.300) (-1.747,1.289) (-1.781,1.278) (-1.814,1.266) (-1.848,1.255) (-1.882,1.244) (-1.915,1.232) (-1.949,1.220) (-1.982,1.208) (-2.015,1.196) (-2.048,1.183) (-2.080,1.170) (-2.112,1.157) (-2.144,1.144) (-2.175,1.130) (-2.206,1.115) (-2.236,1.101) (-2.266,1.086) (-2.296,1.070) (-2.325,1.055) (-2.353,1.039) (-2.381,1.022) (-2.409,1.005) (-2.435,0.988) (-2.462,0.971) (-2.487,0.953) (-2.513,0.935) (-2.537,0.916) (-2.561,0.898) (-2.584,0.879) (-2.607,0.860) (-2.629,0.840) (-2.650,0.821) (-2.670,0.801) (-2.690,0.781)};
\draw[black,line width=1pt] plot[smooth] coordinates{(-2.791,0.660) (-2.805,0.640) (-2.819,0.619) (-2.831,0.599) (-2.842,0.579) (-2.853,0.559) (-2.862,0.539) (-2.871,0.519) (-2.879,0.500) (-2.886,0.480) (-2.892,0.461) (-2.898,0.442) (-2.902,0.423) (-2.906,0.405) (-2.909,0.386) (-2.911,0.368) (-2.912,0.350) (-2.913,0.332) (-2.913,0.315) (-2.912,0.298) (-2.911,0.281) (-2.909,0.264) (-2.906,0.248) (-2.904,0.231) (-2.900,0.215) (-2.896,0.200) (-2.892,0.184) (-2.887,0.169) (-2.882,0.155) (-2.877,0.140) (-2.872,0.126) (-2.866,0.112) (-2.860,0.098) (-2.854,0.084) (-2.848,0.071) (-2.842,0.058) (-2.835,0.045) (-2.829,0.033) (-2.823,0.021) (-2.816,0.009) (-2.810,-0.003) (-2.803,-0.015) (-2.797,-0.026) (-2.791,-0.037) (-2.784,-0.047) (-2.778,-0.058) (-2.772,-0.068) (-2.765,-0.078) (-2.759,-0.087) (-2.752,-0.096) (-2.746,-0.106) (-2.740,-0.114) (-2.733,-0.123) (-2.726,-0.131) (-2.720,-0.139) (-2.713,-0.147) (-2.706,-0.154) (-2.698,-0.161) (-2.691,-0.168) (-2.684,-0.175) (-2.676,-0.181) (-2.668,-0.187) (-2.660,-0.193) (-2.652,-0.198) (-2.643,-0.204) (-2.635,-0.209) (-2.626,-0.213) (-2.616,-0.217) (-2.607,-0.221) (-2.597,-0.225) (-2.587,-0.228) (-2.577,-0.231) (-2.566,-0.234) (-2.555,-0.236) (-2.544,-0.238) (-2.533,-0.240) (-2.521,-0.241) (-2.509,-0.242) (-2.497,-0.243) (-2.484,-0.243) (-2.471,-0.243) (-2.458,-0.242) (-2.444,-0.241) (-2.430,-0.240) (-2.416,-0.238) (-2.401,-0.236) (-2.386,-0.233) (-2.371,-0.230) (-2.355,-0.227) (-2.339,-0.223) (-2.323,-0.218) (-2.306,-0.213) (-2.289,-0.208) (-2.271,-0.202) (-2.253,-0.196) (-2.234,-0.189) (-2.215,-0.182) (-2.195,-0.175) (-2.176,-0.166) (-2.155,-0.158) (-2.134,-0.149) (-2.113,-0.140) (-2.091,-0.130) (-2.068,-0.119) (-2.045,-0.109) (-2.021,-0.097) (-1.997,-0.086) (-1.973,-0.074) (-1.947,-0.062) (-1.921,-0.049) (-1.895,-0.036) (-1.868,-0.023) (-1.840,-0.009) (-1.812,0.005) (-1.783,0.020) (-1.753,0.034) (-1.723,0.049) (-1.692,0.064) (-1.661,0.079) (-1.629,0.095) (-1.597,0.111) (-1.564,0.127) (-1.531,0.143) (-1.498,0.159) (-1.464,0.175) (-1.429,0.192) (-1.395,0.209) (-1.360,0.225) (-1.325,0.242) (-1.289,0.259) (-1.254,0.276) (-1.218,0.293) (-1.183,0.310) (-1.147,0.327) (-1.112,0.344) (-1.076,0.362) (-1.041,0.379) (-1.006,0.396) (-0.971,0.413) (-0.937,0.431) (-0.903,0.448) (-0.869,0.465) (-0.836,0.482) (-0.803,0.500) (-0.770,0.517) (-0.738,0.534) (-0.707,0.552) (-0.676,0.569) (-0.645,0.586) (-0.616,0.603) (-0.587,0.621)};
\draw[black,line width=1pt] plot[smooth] coordinates{(-0.451,0.707) (-0.426,0.724) (-0.402,0.741) (-0.378,0.758) (-0.355,0.776) (-0.333,0.793) (-0.312,0.810) (-0.291,0.827) (-0.271,0.844) (-0.251,0.861) (-0.232,0.878) (-0.214,0.895) (-0.197,0.912) (-0.180,0.929) (-0.164,0.946) (-0.148,0.963) (-0.133,0.980) (-0.119,0.996) (-0.105,1.013) (-0.092,1.030) (-0.079,1.047) (-0.067,1.064) (-0.056,1.081) (-0.045,1.098) (-0.034,1.115) (-0.024,1.132) (-0.014,1.149) (-0.005,1.166) (0.004,1.183) (0.012,1.201) (0.020,1.218) (0.027,1.236) (0.034,1.254) (0.041,1.272) (0.047,1.290) (0.053,1.309) (0.058,1.328) (0.064,1.347) (0.068,1.366) (0.073,1.386) (0.077,1.405) (0.081,1.426) (0.084,1.446) (0.088,1.467) (0.090,1.487) (0.093,1.508) (0.095,1.530) (0.097,1.551) (0.099,1.573) (0.100,1.595) (0.102,1.617) (0.102,1.639) (0.103,1.661) (0.103,1.684) (0.103,1.706) (0.103,1.728) (0.102,1.751) (0.101,1.773) (0.100,1.796) (0.099,1.818) (0.097,1.840) (0.095,1.863) (0.093,1.885) (0.090,1.907) (0.087,1.929) (0.084,1.950) (0.080,1.972) (0.077,1.994) (0.073,2.015) (0.068,2.036) (0.064,2.057) (0.059,2.078) (0.054,2.099) (0.048,2.120) (0.043,2.140) (0.037,2.161) (0.030,2.181) (0.024,2.201) (0.017,2.221) (0.010,2.240) (0.003,2.260) (-0.005,2.279) (-0.013,2.299) (-0.021,2.318) (-0.029,2.337) (-0.038,2.355) (-0.047,2.374) (-0.056,2.392) (-0.065,2.411) (-0.075,2.429) (-0.085,2.447) (-0.095,2.465) (-0.105,2.482) (-0.116,2.500) (-0.127,2.517) (-0.138,2.535) (-0.149,2.552) (-0.161,2.569) (-0.173,2.586) (-0.185,2.602) (-0.197,2.619) (-0.210,2.635) (-0.223,2.652) (-0.236,2.668) (-0.249,2.684) (-0.263,2.700) (-0.277,2.715) (-0.291,2.731) (-0.305,2.746) (-0.320,2.761) (-0.335,2.776) (-0.350,2.791) (-0.366,2.805) (-0.381,2.819) (-0.397,2.833) (-0.414,2.847) (-0.430,2.861) (-0.447,2.874) (-0.464,2.887) (-0.481,2.899) (-0.498,2.912) (-0.515,2.924) (-0.533,2.936) (-0.551,2.947) (-0.569,2.958) (-0.587,2.968) (-0.605,2.979) (-0.623,2.988) (-0.641,2.998) (-0.660,3.007) (-0.678,3.015) (-0.697,3.023) (-0.715,3.031) (-0.734,3.038) (-0.752,3.044) (-0.771,3.050) (-0.789,3.056) (-0.808,3.061) (-0.826,3.065) (-0.844,3.069) (-0.862,3.073) (-0.880,3.075) (-0.898,3.078) (-0.916,3.080) (-0.933,3.081) (-0.950,3.081) (-0.968,3.082) (-0.984,3.081) (-1.001,3.080) (-1.018,3.079) (-1.034,3.077) (-1.050,3.075) (-1.065,3.072) (-1.081,3.069) (-1.096,3.065) (-1.111,3.061) (-1.125,3.057) (-1.139,3.052) (-1.153,3.046) (-1.166,3.041) (-1.179,3.035) (-1.192,3.028) (-1.204,3.022) (-1.215,3.015) (-1.226,3.008) (-1.237,3.000) (-1.247,2.993) (-1.257,2.985) (-1.266,2.977) (-1.274,2.969) (-1.282,2.960) (-1.289,2.952) (-1.296,2.943) (-1.302,2.934) (-1.307,2.926) (-1.312,2.917) (-1.316,2.908) (-1.320,2.898) (-1.322,2.889) (-1.324,2.880) (-1.326,2.871) (-1.326,2.862) (-1.326,2.852) (-1.326,2.843) (-1.324,2.833) (-1.322,2.824) (-1.320,2.814) (-1.316,2.805) (-1.313,2.795) (-1.308,2.785)};
\draw[green!50!black,line width=1pt] plot[smooth] coordinates{(-2.465,3.025) (-2.490,3.015) (-2.514,3.004) (-2.537,2.993) (-2.560,2.982) (-2.583,2.971) (-2.606,2.959) (-2.628,2.947) (-2.649,2.935) (-2.671,2.923) (-2.691,2.910) (-2.712,2.897) (-2.732,2.884) (-2.751,2.871) (-2.770,2.858) (-2.789,2.844) (-2.807,2.830) (-2.825,2.817) (-2.843,2.803) (-2.860,2.788) (-2.876,2.774) (-2.893,2.760) (-2.908,2.745) (-2.924,2.730) (-2.939,2.715) (-2.954,2.700) (-2.968,2.685) (-2.982,2.670) (-2.996,2.655) (-3.009,2.639) (-3.022,2.623) (-3.034,2.608) (-3.047,2.592) (-3.059,2.576) (-3.070,2.559) (-3.081,2.543) (-3.092,2.526) (-3.103,2.510) (-3.113,2.493) (-3.123,2.476) (-3.133,2.458) (-3.143,2.441) (-3.152,2.423) (-3.160,2.406) (-3.169,2.388) (-3.177,2.369) (-3.185,2.351) (-3.192,2.332) (-3.200,2.314) (-3.207,2.295) (-3.213,2.275) (-3.219,2.256) (-3.225,2.236) (-3.231,2.216) (-3.237,2.196) (-3.242,2.176) (-3.246,2.156) (-3.251,2.135) (-3.255,2.114) (-3.259,2.093) (-3.262,2.071) (-3.266,2.050) (-3.269,2.028) (-3.271,2.006) (-3.274,1.984) (-3.276,1.961) (-3.278,1.939) (-3.279,1.916) (-3.280,1.893) (-3.281,1.870) (-3.282,1.847) (-3.282,1.824) (-3.282,1.800) (-3.282,1.777) (-3.282,1.753) (-3.281,1.730) (-3.280,1.706) (-3.278,1.683) (-3.276,1.659) (-3.274,1.636) (-3.272,1.612) (-3.269,1.589) (-3.267,1.566) (-3.263,1.543) (-3.260,1.520) (-3.256,1.498) (-3.251,1.475) (-3.247,1.453) (-3.242,1.431) (-3.237,1.410) (-3.231,1.388) (-3.225,1.367) (-3.218,1.347) (-3.211,1.326) (-3.204,1.306) (-3.197,1.285) (-3.188,1.265) (-3.180,1.246) (-3.171,1.226) (-3.161,1.207) (-3.151,1.187) (-3.141,1.168) (-3.130,1.149) (-3.119,1.130) (-3.107,1.111) (-3.094,1.093) (-3.081,1.074) (-3.068,1.055) (-3.054,1.036) (-3.039,1.018) (-3.024,0.999) (-3.008,0.980) (-2.992,0.962) (-2.975,0.943) (-2.957,0.924) (-2.939,0.905) (-2.920,0.886) (-2.901,0.867) (-2.881,0.848) (-2.860,0.830) (-2.839,0.811) (-2.817,0.792) (-2.794,0.772) (-2.771,0.753) (-2.747,0.734) (-2.723,0.715) (-2.698,0.696) (-2.672,0.677) (-2.646,0.658) (-2.619,0.639) (-2.592,0.620) (-2.564,0.600) (-2.535,0.581) (-2.506,0.562) (-2.477,0.543) (-2.447,0.524) (-2.417,0.505) (-2.386,0.485) (-2.355,0.466) (-2.324,0.447) (-2.292,0.428) (-2.260,0.409) (-2.228,0.390) (-2.196,0.371) (-2.163,0.352) (-2.130,0.333) (-2.098,0.314) (-2.065,0.296) (-2.032,0.277) (-2.000,0.258) (-1.967,0.240) (-1.935,0.221) (-1.902,0.203) (-1.870,0.185) (-1.838,0.167) (-1.806,0.149) (-1.775,0.131) (-1.744,0.113)};
\draw[green!50!black,line width=1pt] plot[smooth] coordinates{(-1.594,0.028) (-1.565,0.012) (-1.537,-0.004) (-1.509,-0.019) (-1.482,-0.035) (-1.455,-0.050) (-1.428,-0.064) (-1.403,-0.078) (-1.377,-0.092) (-1.352,-0.106) (-1.328,-0.119) (-1.304,-0.131) (-1.281,-0.144) (-1.258,-0.155) (-1.235,-0.167) (-1.213,-0.178) (-1.192,-0.188) (-1.171,-0.198) (-1.150,-0.207) (-1.130,-0.216) (-1.110,-0.225) (-1.091,-0.233) (-1.072,-0.241) (-1.053,-0.248) (-1.035,-0.255) (-1.018,-0.261) (-1.000,-0.266) (-0.983,-0.272) (-0.966,-0.276) (-0.950,-0.281) (-0.934,-0.285) (-0.919,-0.288) (-0.903,-0.291) (-0.888,-0.294) (-0.874,-0.296) (-0.859,-0.298) (-0.846,-0.299) (-0.832,-0.300) (-0.818,-0.301) (-0.805,-0.301) (-0.793,-0.301) (-0.780,-0.300) (-0.768,-0.299) (-0.756,-0.298) (-0.745,-0.296) (-0.733,-0.294) (-0.722,-0.292) (-0.711,-0.289) (-0.701,-0.286) (-0.691,-0.283) (-0.681,-0.279) (-0.671,-0.275) (-0.661,-0.271) (-0.652,-0.266) (-0.643,-0.261) (-0.634,-0.256) (-0.626,-0.250) (-0.618,-0.245) (-0.610,-0.238) (-0.602,-0.232) (-0.594,-0.225) (-0.587,-0.218) (-0.579,-0.211) (-0.572,-0.203) (-0.565,-0.195) (-0.559,-0.187) (-0.552,-0.179) (-0.546,-0.170) (-0.539,-0.161) (-0.533,-0.152) (-0.527,-0.142) (-0.521,-0.132) (-0.515,-0.122) (-0.509,-0.111) (-0.504,-0.101) (-0.498,-0.090) (-0.492,-0.078) (-0.486,-0.067) (-0.481,-0.055) (-0.475,-0.042) (-0.470,-0.030) (-0.464,-0.017) (-0.459,-0.004) (-0.453,0.010) (-0.448,0.023) (-0.442,0.038) (-0.437,0.052) (-0.432,0.067) (-0.427,0.082) (-0.422,0.097) (-0.417,0.112) (-0.413,0.128) (-0.409,0.145) (-0.405,0.161) (-0.401,0.178) (-0.398,0.195) (-0.395,0.213) (-0.393,0.231) (-0.391,0.249) (-0.390,0.267) (-0.389,0.286) (-0.389,0.305) (-0.390,0.324) (-0.391,0.344) (-0.393,0.363) (-0.396,0.383) (-0.399,0.404) (-0.404,0.424) (-0.409,0.445) (-0.415,0.466) (-0.422,0.487) (-0.429,0.508) (-0.438,0.529) (-0.447,0.551) (-0.458,0.573) (-0.469,0.594) (-0.481,0.616) (-0.494,0.638) (-0.507,0.659) (-0.522,0.681) (-0.537,0.703) (-0.553,0.724) (-0.570,0.746) (-0.588,0.767) (-0.606,0.788) (-0.625,0.809) (-0.645,0.830) (-0.665,0.850) (-0.687,0.871) (-0.708,0.891) (-0.731,0.910) (-0.754,0.930) (-0.777,0.949) (-0.801,0.968) (-0.826,0.986) (-0.851,1.004) (-0.877,1.022) (-0.903,1.040) (-0.929,1.057) (-0.956,1.073) (-0.984,1.090) (-1.012,1.106) (-1.040,1.121) (-1.069,1.137) (-1.098,1.151) (-1.128,1.166) (-1.157,1.180) (-1.188,1.194) (-1.218,1.208) (-1.249,1.221) (-1.279,1.235) (-1.310,1.248) (-1.342,1.260) (-1.373,1.273) (-1.404,1.286) (-1.435,1.298) (-1.466,1.310) (-1.497,1.322)};
\draw[green!50!black,line width=1pt] plot[smooth] coordinates{(-1.646,1.383) (-1.674,1.395) (-1.702,1.408) (-1.729,1.420) (-1.756,1.433) (-1.782,1.445) (-1.807,1.458) (-1.831,1.471) (-1.854,1.484) (-1.877,1.497) (-1.898,1.510) (-1.919,1.524) (-1.938,1.537) (-1.957,1.551) (-1.975,1.564) (-1.991,1.578) (-2.007,1.592) (-2.022,1.606) (-2.035,1.620) (-2.048,1.634) (-2.060,1.649) (-2.071,1.663) (-2.081,1.677) (-2.090,1.691) (-2.098,1.706) (-2.106,1.720) (-2.112,1.734) (-2.118,1.748) (-2.123,1.762) (-2.127,1.776) (-2.131,1.790) (-2.134,1.804) (-2.136,1.817) (-2.137,1.831) (-2.138,1.844) (-2.138,1.857) (-2.138,1.870) (-2.137,1.882) (-2.135,1.895) (-2.133,1.907) (-2.131,1.918) (-2.127,1.930) (-2.124,1.941) (-2.120,1.951) (-2.115,1.961) (-2.110,1.971) (-2.105,1.981) (-2.099,1.990) (-2.092,1.998) (-2.085,2.006) (-2.078,2.014) (-2.070,2.021) (-2.062,2.027) (-2.054,2.033) (-2.045,2.039) (-2.036,2.044) (-2.026,2.049) (-2.016,2.053) (-2.005,2.056) (-1.994,2.059) (-1.983,2.062) (-1.971,2.064) (-1.959,2.065) (-1.946,2.066) (-1.933,2.067) (-1.920,2.067)};
\draw[green!50!black,line width=1pt] plot[smooth] coordinates{(-1.762,2.052) (-1.744,2.050) (-1.725,2.047) (-1.706,2.044) (-1.687,2.041) (-1.668,2.039) (-1.648,2.036) (-1.628,2.033) (-1.607,2.031) (-1.587,2.029) (-1.566,2.027) (-1.544,2.025) (-1.523,2.023) (-1.501,2.022) (-1.480,2.020) (-1.458,2.020) (-1.436,2.019) (-1.413,2.019) (-1.391,2.019) (-1.369,2.019) (-1.346,2.020) (-1.324,2.022) (-1.301,2.023) (-1.279,2.025) (-1.256,2.028) (-1.234,2.031) (-1.212,2.034) (-1.190,2.038) (-1.167,2.042) (-1.145,2.047) (-1.124,2.052) (-1.102,2.057) (-1.081,2.063) (-1.059,2.070) (-1.038,2.076) (-1.018,2.084) (-0.997,2.091) (-0.977,2.100) (-0.958,2.108) (-0.938,2.117) (-0.919,2.127) (-0.901,2.137) (-0.883,2.147) (-0.865,2.158) (-0.848,2.170) (-0.831,2.182) (-0.815,2.194) (-0.799,2.207) (-0.784,2.221) (-0.770,2.235) (-0.756,2.249) (-0.743,2.264) (-0.730,2.280) (-0.718,2.296) (-0.706,2.313) (-0.696,2.330) (-0.685,2.348) (-0.676,2.366) (-0.667,2.385) (-0.659,2.404) (-0.651,2.425) (-0.644,2.445) (-0.638,2.466) (-0.632,2.488) (-0.627,2.510) (-0.622,2.533) (-0.618,2.556) (-0.615,2.580) (-0.612,2.604) (-0.610,2.629) (-0.608,2.654) (-0.607,2.680) (-0.607,2.706) (-0.607,2.732) (-0.607,2.759) (-0.608,2.786) (-0.610,2.813) (-0.612,2.841) (-0.614,2.868) (-0.617,2.896)};
\draw[green!50!black,line width=1pt] plot[smooth] coordinates{(-0.642,3.065) (-0.647,3.093) (-0.653,3.121) (-0.659,3.149) (-0.666,3.176) (-0.672,3.204) (-0.679,3.231) (-0.687,3.258) (-0.694,3.284) (-0.702,3.310) (-0.710,3.336) (-0.719,3.362) (-0.728,3.387) (-0.737,3.411) (-0.746,3.435) (-0.755,3.459) (-0.765,3.483) (-0.775,3.505) (-0.786,3.528) (-0.796,3.550) (-0.807,3.571) (-0.818,3.592) (-0.830,3.612) (-0.841,3.632) (-0.853,3.652) (-0.865,3.671) (-0.878,3.689) (-0.891,3.707) (-0.904,3.724) (-0.917,3.741) (-0.931,3.758) (-0.945,3.773) (-0.959,3.789) (-0.974,3.803) (-0.988,3.818) (-1.003,3.831) (-1.019,3.844) (-1.035,3.857) (-1.050,3.869) (-1.067,3.881) (-1.083,3.892) (-1.100,3.902) (-1.117,3.912) (-1.135,3.921) (-1.152,3.930) (-1.170,3.938) (-1.189,3.945) (-1.207,3.952) (-1.226,3.959) (-1.245,3.965) (-1.264,3.970) (-1.284,3.974) (-1.304,3.978) (-1.324,3.982) (-1.344,3.984) (-1.365,3.986) (-1.386,3.988) (-1.407,3.989) (-1.428,3.989) (-1.449,3.989) (-1.471,3.988) (-1.492,3.986) (-1.514,3.984) (-1.536,3.981) (-1.558,3.978) (-1.580,3.974) (-1.601,3.969) (-1.623,3.964) (-1.645,3.958) (-1.667,3.952) (-1.689,3.944) (-1.711,3.937) (-1.732,3.928) (-1.754,3.920) (-1.775,3.910) (-1.796,3.900) (-1.817,3.889) (-1.837,3.878) (-1.858,3.866) (-1.878,3.854) (-1.897,3.841) (-1.917,3.827) (-1.936,3.813) (-1.955,3.798) (-1.974,3.783) (-1.992,3.767) (-2.010,3.750) (-2.027,3.733) (-2.045,3.715) (-2.062,3.697) (-2.078,3.678) (-2.094,3.658) (-2.110,3.638) (-2.126,3.617) (-2.141,3.596) (-2.156,3.573) (-2.171,3.551) (-2.185,3.528) (-2.199,3.504) (-2.212,3.480) (-2.225,3.455) (-2.238,3.430) (-2.251,3.404) (-2.263,3.377) (-2.275,3.351) (-2.287,3.323) (-2.298,3.296) (-2.309,3.268) (-2.320,3.239) (-2.330,3.210) (-2.340,3.181) (-2.349,3.152) (-2.359,3.123) (-2.367,3.093) (-2.376,3.063) (-2.384,3.033) (-2.392,3.003) (-2.399,2.973) (-2.406,2.943) (-2.412,2.913) (-2.418,2.884) (-2.423,2.854) (-2.428,2.824) (-2.433,2.795) (-2.437,2.766) (-2.440,2.738) (-2.443,2.709) (-2.446,2.681) (-2.448,2.654) (-2.449,2.627) (-2.450,2.600) (-2.450,2.574) (-2.450,2.548) (-2.449,2.523) (-2.448,2.499) (-2.445,2.475) (-2.443,2.451) (-2.440,2.428) (-2.436,2.406) (-2.431,2.384) (-2.426,2.363) (-2.420,2.342) (-2.414,2.323) (-2.407,2.303) (-2.400,2.285) (-2.392,2.267) (-2.384,2.249) (-2.375,2.232) (-2.365,2.216) (-2.355,2.200) (-2.345,2.185) (-2.334,2.171) (-2.322,2.157) (-2.310,2.144) (-2.298,2.131) (-2.286,2.119) (-2.273,2.108) (-2.260,2.097) (-2.246,2.087) (-2.233,2.077) (-2.219,2.068) (-2.205,2.059) (-2.190,2.052) (-2.176,2.044) (-2.162,2.038) (-2.147,2.032)};
\draw[green!50!black,line width=1pt] plot[smooth] coordinates{(-2.004,2.005) (-1.990,2.005) (-1.977,2.007) (-1.964,2.009) (-1.951,2.011) (-1.938,2.014) (-1.925,2.018) (-1.913,2.022) (-1.901,2.027) (-1.889,2.032) (-1.877,2.038) (-1.865,2.045) (-1.854,2.051) (-1.843,2.059) (-1.832,2.067) (-1.821,2.075) (-1.810,2.084) (-1.800,2.093) (-1.789,2.102) (-1.779,2.112) (-1.768,2.123) (-1.758,2.133) (-1.748,2.144) (-1.738,2.155) (-1.728,2.166) (-1.718,2.178) (-1.708,2.190) (-1.698,2.202) (-1.688,2.214) (-1.678,2.226) (-1.668,2.238) (-1.658,2.251) (-1.647,2.263) (-1.637,2.276) (-1.627,2.289) (-1.616,2.301) (-1.606,2.314) (-1.595,2.326) (-1.584,2.339) (-1.573,2.352) (-1.562,2.364) (-1.551,2.377) (-1.540,2.389) (-1.529,2.402) (-1.517,2.414) (-1.506,2.427) (-1.494,2.439) (-1.483,2.451) (-1.471,2.463) (-1.460,2.475) (-1.448,2.487) (-1.437,2.499) (-1.426,2.511) (-1.414,2.523) (-1.403,2.535) (-1.392,2.547) (-1.382,2.558) (-1.371,2.570) (-1.361,2.582) (-1.351,2.593) (-1.341,2.605) (-1.332,2.616) (-1.322,2.628) (-1.314,2.639) (-1.305,2.651) (-1.298,2.662) (-1.290,2.673) (-1.283,2.685) (-1.277,2.696) (-1.271,2.707) (-1.266,2.718) (-1.261,2.729) (-1.257,2.741) (-1.254,2.752) (-1.251,2.763) (-1.249,2.774) (-1.248,2.785) (-1.247,2.796) (-1.247,2.807) (-1.248,2.818) (-1.249,2.830) (-1.252,2.841) (-1.255,2.852) (-1.259,2.862) (-1.263,2.873)};
\draw[green!50!black,line width=1pt] plot[smooth] coordinates{(-1.367,2.989) (-1.381,2.999) (-1.396,3.008) (-1.411,3.018) (-1.428,3.027) (-1.445,3.036) (-1.463,3.044) (-1.481,3.053) (-1.501,3.061) (-1.521,3.068) (-1.541,3.076) (-1.562,3.083) (-1.584,3.090) (-1.607,3.096) (-1.630,3.102) (-1.653,3.107) (-1.677,3.112) (-1.701,3.117) (-1.726,3.121) (-1.751,3.125) (-1.776,3.128) (-1.802,3.130) (-1.828,3.132) (-1.855,3.134) (-1.881,3.135) (-1.908,3.135) (-1.935,3.135) (-1.962,3.134) (-1.989,3.133) (-2.016,3.132) (-2.044,3.129) (-2.071,3.126) (-2.098,3.123) (-2.125,3.119) (-2.153,3.115) (-2.180,3.110) (-2.207,3.104) (-2.234,3.098) (-2.260,3.092) (-2.287,3.085)};
\node[black,anchor=west] at (0.149,1.684) {$0$};
\node[green!50!black,anchor=east] at (-3.328,1.800) {$-8$};
\end{tikzpicture}
\caption{Left: A Kirby diagram of $X_{1,0}$: a $1$--handle, and a $2$--handle with framing $-8$ attached along a left-handed
trefoil, which runs four times over the $1$--handle in the same direction.  Right: The corresponding
surgery diagram of $Y_1$, with coefficient $0$ on the unknotted component
and $-8$ on the trefoil component.}
\label{fig:X0d1-trefoil}
\end{figure}

This is the analogue of the description of $Y$ as $(0,-8)$--surgery on $7^2_4$ in
\cref{prop:baykur-surgery}.  The link of \cref{fig:X0d1-trefoil} is L11n201 in the
notation of SnapPy, see \cref{subsec:computer-X10}.

\FloatBarrier
\section{Computer checks}\label{sec:computer}

In this section we collect computations carried out with SnapPy \cite{SnapPy},
Regina \cite{Regina} and the Kirby calculator KLO \cite{KLO}.  None of them is used in the proofs of the results
above.  They confirm some of these results independently, and they give
additional information that we could not obtain by hand.  The scripts, their
input data and their output are included as ancillary files.

\subsection{Baykur's manifold}\label{subsec:computer-Y}
\Cref{prop:baykur-surgery,prop:baykur-seifert} can also be confirmed by
computer.  In SnapPy the link $7^2_4$ is $\mathrm{L7a3}$, and Dehn filling its
exterior with coefficient $0$ on $U$ and $-8$ on $J$ gives a closed manifold
with $H_1\cong\mathbb Z\oplus\mathbb Z/8$.  Regina recognizes the resulting
triangulation, after simplification, as the Seifert fibered space
$\mathrm{SFS}\,[\mathrm{KB}/n2\colon(2,1)]$, that is, Seifert fibered over the
Klein bottle with a single exceptional fiber of type $(2,1)$.  Regina gives the
same answer for the boundary of the Lefschetz fibration $X_0$ itself, computed
from the Kirby diagram of \cref{fig:baykur-kirby}. Since these Seifert invariants determine the manifold up to
homeomorphism, this confirms, independently of the handle calculus, that $Y$ is
$(0,-8)$--surgery on $7^2_4$, as unoriented manifolds.  In the diagram of L7a3
in SnapPy the trefoil component has three negative crossings, so it is the
left-handed trefoil, and the link of \cref{fig:final-kirby} is L7a3 itself rather
than its mirror image.  The orientation is
supplied by the handle calculus.

\subsection{Distinguishing members of the families}\label{subsec:computer-families}
Within each family, some of the $3$--manifolds can be distinguished by computer.
The fundamental group of $Y_c$ has a presentation with generators
$x_1,\ldots,x_5$ and one relation for each hole $\delta_k$, which identifies an arc
from $\ast$ to $\delta_k$ with its image under the monodromy $t_cB$.  Counting the
homomorphisms from these groups to the symmetric group $S_3$ and to the dihedral
group of order ten gives $16$ and $36$ for $Y_m$ with $m$ odd and $22$ and $56$
for $Y_m$ with $m$ even, for $m\le4$, and $52$ and $76$, $28$ and $56$, $70$ and
$56$, $28$ and $76$, and $52$ and $156$ for $Y'_m$ with $m=1,2,3,4,5$,
respectively.  In particular, $Y_1$ and $Y_2$ are not homeomorphic, and
$Y'_1,\ldots,Y'_5$ are pairwise non-homeomorphic.

\subsection{The Kirby diagrams of \texorpdfstring{$X_{1,0}$}{X\_\{1,0\}}}\label{subsec:computer-X10}
The handle moves of \cref{rem:X0d1}(a) were carried out twice independently, and
we checked that the two resulting links are isotopic to the link of
\cref{fig:X0d1}.  For each of them, SnapPy finds an orientation-preserving
isometry between its complement and the complement of the link of
\cref{fig:X0d1}, which matches the components and maps meridians to meridians.
By Dehn filling along the meridians, such an isometry extends to an
orientation-preserving homeomorphism of $S^3$ carrying one link onto the other,
and the framing coefficients, which are measured relative to the Seifert
framings, agree.  Hence the Kirby diagrams are isotopic.  Similarly, the
complement of the link of \cref{fig:X0d1-trefoil} is isometric to that of
L11n201, by an orientation-preserving isometry taking meridians to meridians,
so the two links are isotopic.  As a further check, the boundary of the
diagram of \cref{fig:X0d1-trefoil}, with framing $-8$, has the same first
homology, the same numbers of homomorphisms to $S_3$ and to the dihedral group
of order ten, and the same homology of the connected covers of degree at most
four as $Y_1$, computed from the Lefschetz fibration.

\subsection{Kirby calculus by computer}\label{subsec:computer-KLO}
All the handle moves in this paper, in the proofs of
\cref{prop:baykur-surgery,thm:klein-main,prop:W0,rem:X0d1}, were also carried out with the Kirby
calculator KLO \cite{KLO}, independently of the hand-drawn sequences.  In each
case the computer computation ends with the same framed link as the hand-drawn
sequence, namely the link on the left of \cref{fig:final-kirby}, passing through
the diagram of \cref{fig:X0-DTK}, and the links of
\cref{fig:klein-kirby,fig:W0-kirby,fig:X0d1,fig:X0d1-trefoil}, respectively, the last two
for parts (a) and (b) of \cref{rem:X0d1}.  The diagram of \cref{fig:X0-DTK} was
compared with the corresponding KLO frame as in \cref{subsec:computer-X10}, and
the diagram of \cref{fig:X0d1-trefoil} is drawn from the last KLO frame.  The
sequences of KLO frames, showing the slide bands and the framings, are included
as ancillary files.

\subsection{Software and ancillary files}\label{subsec:software}
The computations were carried out with SnapPy~3.3.2 and Regina~7.4, and the
Kirby calculus with the Kirby calculator KLO \cite{KLO}.  The ancillary files are
organized in one folder for each of
\cref{prop:baykur-surgery,thm:klein-main,prop:W0,rem:X0d1}
and \cref{subsec:computer-Y,subsec:computer-families,subsec:computer-X10},
each with a description of its contents.  The KLO frames of
\cref{subsec:computer-KLO} are in the folders of the corresponding propositions.

\end{document}